\documentclass[10pt]{iopart}
\usepackage{iopams}
\usepackage{amsfonts}
\usepackage{epsfig}
\usepackage{subfigure}
\usepackage{mathrsfs}
\usepackage{amsthm}
\usepackage{amssymb}
\usepackage{enumitem}
\usepackage{url}
\usepackage{amstext}
\usepackage{color,soul}
\usepackage{multirow}
\newtheorem{theorem}{Theorem}[section]

\newtheorem{lemma}[theorem]{Lemma}
\newtheorem{proposition}[theorem]{Proposition}
\newtheorem{remark}[theorem]{Remark}
\newtheorem{algorithm}{Algorithm}
\usepackage{epstopdf}
\graphicspath{{fig/}}
\usepackage{lineno,hyperref}
\usepackage{cleveref}
\newtheorem{assumption}{Assumption}[section]

\begin{document}

\title[]{On a posteriori stopping rules of adaptive stochastic heavy ball method for ill-posed problems}

\author{ Ruixue Gu$^1$\footnote{Author to whom any correspondence should be addressed} and Qinian Jin$^2$}
\address{ $^1$ School of Science, Dalian Maritime University, Dalian 116026, PR China\\[1mm]
$^2$ Mathematical Sciences Institute, Australian National University, Canberra ACT
2601, Australia
}
\ead{ruixue\_gu@dlmu.edu.cn, qinian.jin@anu.edu.au}


\begin{abstract}
In this paper we develop a stochastic heavy ball method for solving ill-posed inverse problems. The method updates the iterate using only a randomly selected equation at each iteration step while incorporating a momentum term into the process. To facilitate fast convergence, we propose an adaptive strategy for selecting the step size and the momentum coefficient. Inspired by the spirit of the discrepancy principle, we introduce an {\it a posteriori} stopping rule for our adaptive stochastic heavy ball method. This rule avoids the need to compute residuals of all equations in the system at every iteration or at fixed frequency intervals, thereby enhancing computational efficiency and practicality. Additionally, convex penalty functions are employed to capture the specific features of the desired solutions. Under suitable conditions, we establish almost sure convergence as well as convergence in expectation. Extensive numerical experiments are conducted to evaluate the performance of the proposed method, demonstrating its efficiency and promising potential for solving large-scale ill-posed problems.
\end{abstract}

%

\begin{indented}
\item \noindent{\it Keywords}: {\it a posteriori} stopping rules, stochastic heavy ball method, system of ill-posed equations, convergence
\end{indented}

%
%
%
%
\def\d{\delta}
\def\P{\mathbb{P}}
\def\l{\langle}
\def\r{\rangle}
\def\la{\lambda}
\def\EE{{\mathbb E}}
\def\R{{\mathcal R}}
\def\a{\alpha}
\def\p{\partial}
\def\ep{\varepsilon}
\def\PP{{\mathbb P}}
\def\EE{\mathbb E}
\def\F{{\mathcal F}}
\def\mr{\mathfrak{r}}
\def\m{\mathfrak{m}}
\def\g{\mathfrak{g}}

\def\theequation{\thesection.\arabic{equation}}
\catcode`@=12

\section{\bf Introduction}
\setcounter{equation}{0}

In this paper, we consider ill-posed inverse problems governed by the system
\begin{equation}\label{nonlinear equation}
F_i(x)=y_i, \quad i=1, \cdots, N
\end{equation}
consisting of $N$ equations, where $F_i:\mbox{dom}(F_i)\subset \mathcal{X} \rightarrow {\mathcal{Y}_i}$ are linear or nonlinear operators from Hilbert space $\mathcal{X}$ to Banach spaces ${\mathcal{Y}_i}$ with domain $\mbox{dom}(F_i)$. Such systems (\ref{nonlinear equation}) emerge in a broad range of practical applications, such as various tomography techniques and geophysics \cite{Engl1996Regularization,Hanafy1991,Natterer2001}. We assume that (\ref{nonlinear equation}) has a solution, which might not be unique.  To find the solution with desired feature, one may pick a proper, lower semi-continuous, convex function $\Theta :\mathcal{X} \to \left( { - \infty ,\infty } \right]$ that incorporates {\it a priori} available information of the solution, and select a solution $x^\dag$ of (\ref{nonlinear equation}) such that
\begin{equation}\label{minimum solution}
\fl {D_{{\xi _0}}}\Theta \left( {{x^\dag },{x_0}} \right): = \min_{x \in \mbox{dom}(\Theta) \cap \mathcal{D}} \left\{ {{D_{{\xi _0}}}\Theta \left( {x,{x_0}} \right):{F}_i\left( x \right) = {y_i}}, \, i=1,\cdots,N \right\},
\end{equation}
where $x_0 \in \mbox{dom}\left( {\partial \Theta } \right)$ and $\xi _0  \in \partial \Theta\left( {x_0} \right)$ denote the initial guess. Here $\mathcal{D}: = \bigcap_{i = 1}^N {\mbox{dom}\left( {{F_i}} \right)}  \ne \emptyset $ and ${D_{{\xi _0}}}\Theta\left( {x,{x_0}} \right)$ denotes the Bregman distance  induced by $\Theta$ at $x_0$ in the direction $\xi_0$; see (\ref{bregman distance}) for the definition. Due to the errors in the data acquisition process, the exact data $y: = (y_1,\cdots,y_N)$ is generally unavailable; instead we  only have noisy data $y^\delta := (y_1^\delta,\cdots, y_N^\delta)$ satisfying
 \[
 \left\| {y_i^\delta  - {y_i}} \right\| \le {\delta _i}, \quad i=1, \cdots,N
 \]
 with the noise levels $\delta_i>0$. Such inverse problems are inherently ill-posed in the sense that the solution does not depend continuously on the data. Thus, the reconstruction of the solution $x^\dag$ from noisy data $y^\delta$ necessitates the regularization methods, see \cite{Engl1996Regularization,KNS2008} and the references therein.

A variety of regularization methods have been developed for solving (\ref{nonlinear equation}) in the last three decades; see \cite{Engl1996Regularization,Hanke1995A,KNS2008,Schuster2012Regularization} and the references therein. One of the most prominent methods is the Landweber-type iteration \cite{Bo2011Iterative,JinWang2013}, which reads
\begin{equation}\label{land}
\eqalign{
\xi _{n + 1}^\delta  = \xi _n^\delta  - t_n^\delta \sum\limits_{i = 1}^N {{F_i'}{{\left( {x_n^\delta } \right)}^*}J_r^{{{\cal Y}_i}}\left( {{F_i}\left( {x_n^\delta } \right) - {y_i^\delta} } \right)},\\
x_{n + 1}^\delta  = \arg \mathop {\min }\limits_{x \in \mathcal{X}} \left\{ {\Theta \left( x \right) - \left\langle {\xi _{n + 1}^\delta ,x} \right\rangle } \right\},
}
\end{equation}
where $t_n^\delta$ is the step size and $J_r^{\mathcal{Y}_i}$ denotes the duality mapping of $\mathcal{Y}_i$ corresponding to the gauge function $t \to {t^{r - 1}}$ with  $1<r<\infty$. The convergence of the method (\ref{land}) has been established in \cite{JinWang2013} when it is terminated by {\it a priori} or {\it a posteriori} stopping rules. However, each iteration of (\ref{land}) can be computationally expensive
when $N$ is huge, as it requires solving forward and adjoint problems for all $N$ equations in (\ref{nonlinear equation}), rendering the method impractical in applications.

To tackle the problem, the stochastic gradient descent (SGD) methods, motivated by its success in large-scale optimization \cite{Bottou_2010,Bottou_2018,Jin2025_book,Robbins_1951}, has attracted increasing attention in the inverse problem community. Several studies have been carried out to analyze its convergence properties in the context of ill-posed inverse problems \cite{Gu-Fu-2025,Huang-Jin-Lu-Zhang-2025,JinB_2019,JinB_2020,JinQ_2023,Lu_2022}. In particular the SGD method of the form
\begin{equation}\label{sgdintro}
\eqalign{
\xi _{n + 1}^\delta  = \xi _n^\delta  - t_n^\delta {F_{{i_n}}'}\left( {x_n^\delta } \right)^*J_r^{\mathcal{Y}_{i_n}}\left( {{F_{{i_n}}}\left( {x_n^\delta } \right) - y_{{i_n}}^\delta } \right),\\
x_{n + 1}^\delta  = \arg \mathop {\min }\limits_{x \in \mathcal{X}} \left\{ {\Theta \left( x \right) - \left\langle {\xi _{n + 1}^\delta ,x} \right\rangle } \right\}
}
\end{equation}
has been developed in \cite{Gu-Fu-2025,Huang-Jin-Lu-Zhang-2025,JinQ_2023} for solving (\ref{nonlinear equation}), where the random index $i_n \in \left\{ {1,2, \cdots ,N} \right\}$ is selected uniformly with replacement.
The convergence results of the method have been established in \cite{Gu-Fu-2025, Huang-Jin-Lu-Zhang-2025,JinQ_2023} when the step size is given by
\begin{equation}\label{stepsgd}
\fl t_n^\delta  = \left\{\begin{array}{lll}
\min \left\{\frac{\mu_0 \left\|r_n^\delta\right\|^{p(r - 1)}}{\left\|F_{i_n}'\left(x_n^\delta\right)^* J_r^{{\cal Y}_{i_n}} \left(r_n^\delta\right)\right\|^p}, \mu_1 \right\} \left\|r_n^\delta\right\|^{p - r}, & \mbox{if } \left\| r_n^\delta\right\| > \tau \delta_{i_n}, \\[1mm]
0,  & \mbox{otherwise}
\end{array} \right.
\end{equation}
with $r_n^\delta:=F_{i_n}\left(x_n^\delta\right) - y_{i_n}^\delta$, where $\mu_0>0$, $\mu_1>0$, $p \ge 2$ and $\tau>1$ are suitably chosen constants. At each iteration, the method (\ref{sgdintro}) requires only computing an unbiased estimator $F_{i_n}'\left(x_n^\delta\right)^*J_r^{\mathcal{Y}_{i_n}} \left(F_{i_n}\left(x_n^\delta\right) - y_{i_n}^\delta\right)$, of the full gradient $\frac{1}{N}\sum\nolimits_{i = 1}^N F_i'\left(x_n^\delta\right)^* J_r^{{\cal Y}_i} \left(F_i\left(x_n^\delta\right) - y_i^\delta\right)$, from a randomly selected equation, thereby significantly decreasing the computational cost. However, the presence of stochastic gradient noise causes the SGD method to exhibit pronounced oscillations in the iterates and consequently slows convergence. Although such oscillations could be suppressed by selecting the step size properly as in (\ref{stepsgd}) (see \cite{Gu-Fu-2025,JinQ_2023}), how to incorporate variance reduction techniques to accelerate the method (\ref{sgdintro}) remains an active research topic.

In recent years, various variants of the SGD method have been investigated for solving ill-posed inverse problems, including minibatch \cite{JinQ_2023}, stochastic variance reduced gradient methods \cite{Jin-Zhou-zou-2022,Jin-chen-2025}. More recently, by integrating a momentum term $\beta_n^\delta \left(\xi_n^\delta  - \xi_{n - 1}^\delta\right)$ into the iteration process, a stochastic heavy ball method was proposed in  \cite{Jin-Liu-2025} for solving inverse problems (\ref{nonlinear equation}) with $F_i$ being linear operators. The stochastic heavy ball method in \cite{Jin-Liu-2025} has the form of
\begin{equation}\label{shbintroli}
\eqalign{
\xi_{n + 1}^\delta  = \xi_n^\delta  - t_n^\delta F_{i_n}^*\left(F_{i_n} x_n^\delta - y_{i_n}^\delta\right) + \beta_n^\delta \left(\xi_n^\delta  - \xi_{n - 1}^\delta\right),\\
x_{n + 1}^\delta  = \arg \min_{x \in {\cal X}} \left\{\Theta(x) - \left\langle \xi_{n + 1}^\delta, x\right\rangle \right\},
}
\end{equation}
where $i_n\in \left\{ {1,2, \cdots ,N} \right\}$ is drawn randomly via the uniform distribution, $t_n^\delta$  is the step size and $\beta_n^\delta$ is the momentum coefficient. When $t_n^\delta$  and  $\beta_n^\delta$ are chosen by
\begin{equation}\label{tnbetan}
t_n^\delta  = \frac{\eta_{i_n}}{n + 2} \quad \mbox{and} \quad \beta_n^\delta  = \frac{n}{n + 2}
\end{equation}
with preassigned $0<\eta_{i_n}<1/\|F_{i_n}\|^2$, the convergence of the method (\ref{shbintroli}) has been analyzed in \cite{Jin-Liu-2025} under {\it a priori} stopping rule.
The numerical simulations reported in \cite{Jin-Liu-2025} demonstrate that the inclusion of the momentum term enables the method (\ref{shbintroli}) to outperform the SGD method. Nevertheless, the choice of the parameters $t_n^\delta$ and  $\beta_n^\delta$ is crucial for the performance of the method (\ref{shbintroli}).
It is therefore natural to anticipate that the method (\ref{shbintroli}) can achieve faster convergence through adaptive selection of $t_n^\delta$ and  $\beta_n^\delta$. More importantly, we notice that the most existing work on stochastic iterative methods, including the SGD method mentioned above, is conducted under the {\it a priori} stopping rules. The {\it a priori} stopping rules, however, usually requires extra knowledge of the sought solution, which is typically unavailable in practice. Therefore, it is of practical and theoretical interest to propose an {\it a posteriori} stopping rule and develop corresponding convergence results for stochastic iterative methods.

In this work we will consider solving (\ref{nonlinear equation}) by the following stochastic heavy ball method
\begin{equation}\label{shbintro}
\eqalign{
\xi_{n + 1}^\delta  = \xi_n^\delta  - t_n^\delta F_{i_n}'\left(x_n^\delta\right)^* J_r^{{\cal Y}_{i_n}} \left( F_{i_n} \left(x_n^\delta\right) - y_{i_n}^\delta\right) + \beta_n^\delta\left(\xi_n^\delta - \xi_{n - 1}^\delta \right),\\
x_{n + 1}^\delta  = \arg \min_{x \in {\cal X}} \left\{\Theta(x) - \langle \xi_{n + 1}^\delta, x\rangle\right\},
}
\end{equation}
where the index $i_n$ is drawn randomly from the set $ \left\{1, \cdots, N\right\}$ via a uniform distribution. To facilitate fast convergence, inspired by the work \cite{Jin-2025,Jin-Huang-2024}, the step size $t_n^\delta$ and the momentum coefficient $\beta_n^\delta$ are chosen adaptively to ensure a sufficient decay of the Bregman distance between $x_n^\delta$ and a solution $\hat x$ of (\ref{nonlinear equation}); see section \ref{3themethod} for elaboration. To terminate the iteration (\ref{shbintro}) in an  {\it a posteriori} manner, one may consider the widely used discrepancy principle
\begin{equation*}
\left\|F_i\left(x_{n_\delta}^\delta \right)-y_i^\delta \right\| \le \tau \delta_i,  \quad \forall i=1,\cdots,N.
\end{equation*}
However, it is worth noting that directly computing  the residuals $\left\|F_i\left(x_n^\delta\right)-y_i^\delta \right\|$ for all $i$ at every iteration, or with some frequency, can result in prohibitive overhead \cite{Huang-Jin-Lu-Zhang-2025}. To address this issue, we adopt the following implementable and effective strategy: initialize a set $I_0(y^\delta):=\{1, \cdots, N\}$, which records the equations that might not satisfy (\ref{approximatesolution}). For the randomly selected index $i_n$, if $\left\| {{F_{{i_n}}}\left( {x_n^\delta } \right) - y_{{i_n}}^\delta } \right\| \le \tau {\delta _{{i_n}}}$ and $\beta_n^\delta$=0---indicating that the discrepancy principle has been satisfied for the $i_n$-th equation and the iteration makes no further contribution---then we update $I_{n+1}(y^\delta):=I_{n}(y^\delta)\backslash \left\{i_n \right\}$ and continue the iteration using the remaining indices in $I_{n+1}(y^\delta)$. Otherwise, the set $I_{n+1}(y^\delta)$ is reset to $\{1, \cdots, N\}$. The algorithm terminates when $I_{n_\delta}(y^\delta)$  becomes empty. A detailed description of the method (\ref{shbintro}) incorporating this stopping rule is provided in Algorithm \ref{shbnoisy}. 

The primary contributions of this work are the following:
\begin{itemize}
\item  We propose a stochastic heavy ball method (\ref{shbintro}), and, by incorporating the spirit of the discrepancy principle, develop an {\it a posteriori} stopping rule. This stopping rule eliminates the need to compute the residuals of all equations at every iteration or at fixed frequency intervals, making the method particularly efficient for large-scale ill-posed problems.

\item  We establish the almost sure convergence and the convergence in expectation of the method (\ref{shbintro}) under the proposed {\it a posteriori} stopping rule.
Note that the analysis in \cite{Jin-Liu-2025} relies on the {\it a priori} stopping rule and the specific structure of the method when the operators $F_i$ are linear and the parameters $t_n^\delta$ and $\beta_n^\delta$ are chosen according to (\ref{tnbetan}), and thus is no longer applicable in our setting.
Due to the randomness of the stopping index $n_\delta$ produced by Algorithm \ref{shbnoisy} and the adaptive choices of $t_n^\delta$ and $\beta_n^\delta$, our convergence analysis is highly challenging. The analysis is carried out by first showing that $n_\delta$ is finite {\it almost surely}, and then constructing an event of probability one, on which the method with exact data converges to the solution along any sample path. This, combined with the stability property, paves the way for establishing the convergence result in the noisy data case.

\item  Additionally, in section \ref{sect4}, we investigate a mini-batch variant of the stochastic heavy ball method. As a byproduct, we also derive the convergence results of the SGD method when it is terminated by our proposed stopping criterion. The numerical experiments in section \ref{3numbericalex} validate the theoretical findings and the superiority of the method (\ref{shbintro}) over the SGD method.
\end{itemize}
This paper is organized as follows. Section \ref{preliminaries} gives a brief exposition of convex analysis that are crucial in the theoretical analysis of the method. In section \ref{3themethod}, we discuss the choice of the step size and the momentum coefficient, and design an {\it a posteriori} stopping rule for the stochastic heavy ball method. Then, under suitable conditions, we establish both almost sure convergence and convergence in expectation of the method. In section \ref{sect4} we present a minibatch variant of the stochastic heavy ball method discussed in section \ref{3themethod}. Finally, in section \ref{3numbericalex}, we report numerical simulations to demonstrate the effectiveness of the method.

\section{Preliminaries}\label{preliminaries}

In this section, we collect some notions and basic facts related to convex analysis and Banach spaces; for more information we refer the reader to \cite{Cioranescu1990Geometry,Jin2025_book,Zalinescu2002Convex}.

Let $\mathcal{X}$ be a Banach space with its dual space denoted by ${\mathcal{X}^*}$; when $\mathcal{X}$ is a Hilbert space, its dual space ${\mathcal{X}^*}$ is identified with $\mathcal{X}$ itself. For $x \in \mathcal{X}$, and $\xi \in \mathcal{X}^*$, we denote by $\langle \xi, x\rangle  = \xi(x)$ the duality pairing. For a bounded linear operator $A:\mathcal{X} \to \mathcal{Y}$ between two Banach spaces $\mathcal{X}$ and $\mathcal{Y}$, let $A^*:\mathcal{Y}^* \to \mathcal{X}$ and ${\rm Ran}(A)$ denote its adjoint and range space respectively.

For a convex function $f :\mathcal{X} \to \left( { - \infty ,\infty } \right]$, its effective domain is defined by $\text{dom}\left( f \right): = \left\{ {x \in \mathcal{X}:f \left( x \right) < \infty } \right\}$. We call $f$ proper if $\text{dom}(f) \ne \emptyset$. For any $x \in \mathcal{X}$, we define
\begin{equation*}
\partial f(x): = \left\{\xi \in \mathcal{X}^*: f(\bar x) - f(x) - \langle \xi, \bar x - x\rangle  \ge 0,~\forall~\bar x \in \mathcal{X} \right\},
\end{equation*}
which is called the subdifferential of $f$ at $x$. Each $\xi \in \partial f(x)$ is called a subgradient of $f $ at $x$. Note that $\p f$ defines a set-valued mapping from ${\mathcal X}$ to ${\mathcal X}^*$. The domain and graph of $\p f$ are
defined respectively as
\begin{equation*}
\mbox{dom}(\p f) := \{x\in \mbox{dom}(f): \p f(x)\ne \emptyset \}
\end{equation*}
and
\begin{equation*}
\mbox{graph}(\p f)
:= \left\{(x, \xi)\in {\mathcal X}\times {\mathcal X}^*: x\in \mbox{dom}(\p f) \mbox{ and } \xi \in \p f(x)\right\}.
\end{equation*}
Given any $(x, \xi) \in \text{graph}(\partial f)$, the quantity
\begin{equation}\label{bregman distance}
D_{\xi} f(\bar x, x): = f(\bar x) - f(x) - \langle \xi, \bar x - x\rangle, \quad \forall \bar x \in \mathcal{X}
\end{equation}
is called the Bregman distance induced by $f$ at $x$ in the direction $\xi$. Clearly, the Bregman distance is always nonnegative and satisfies the identity
\begin{equation}\label{2.9}
D_{\xi_2} f(x, x_2) - D_{\xi_1} f(x, x_1) = D_{\xi_2} f(x_1, x_2) + \l \xi_2 - \xi_1, x_1 - x\r
\end{equation}
for all $x\in \mathcal{X}$ and $(x_1, \xi_1), (x_2, \xi_2) \in \mbox{graph}(\p f)$.

A proper function $f :\mathcal{X} \to \left( { - \infty ,\infty } \right]$ is said to be strongly convex if there exists a constant ${\sigma} > 0$ such that
\begin{equation}\label{pconvex}
f \left( {tx + \left( {1 - t} \right)\bar{x}} \right) + {\sigma}t\left( {1 - t} \right){\left\| {x - \bar{x}} \right\|^2} \le tf \left( x \right) + \left( {1 - t} \right)f \left( \bar{x} \right)
\end{equation}
for all $x,\bar{x} \in \mathcal{X}$ and $0 \le t \le 1$. It is easy to see that if $f : \mathcal{X} \to (-\infty, \infty]$
is strongly convex in the sense of (\ref{pconvex}), then
\begin{equation}\label{theta p-convex}
D_{\xi} f(\bar x, x) \ge \sigma \|\bar{x} - x\|^2
\end{equation}
for all $\bar x\in {\mathcal X}$ and $(x, \xi) \in \mbox{graph}(\p f)$.

\begin{lemma}\label{lem1}
Let $f: \mathcal{X} \to (-\infty, \infty]$ be a strongly convex function in the sense of (\ref{pconvex}). Then for any $(x_1, \xi_1), (x_2, \xi_2) \in {\rm graph}(\p f)$ there holds
\begin{equation*}
D_{\xi_1} f(x_2, x_1) \le \frac{1}{4\sigma} \|\xi_2 -\xi_1\|^2.
\end{equation*}
\end{lemma}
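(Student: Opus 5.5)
Combine the strong convexity bound (\ref{theta p-convex}) with the subgradient inequality at $x_2$, and close with Young's inequality. Both $(x_1,\xi_1)$ and $(x_2,\xi_2)$ lie in ${\rm graph}(\p f)$, so $f(x_1)$ and $f(x_2)$ are finite and every Bregman distance below is well defined.

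First, write out the definition (\ref{bregman distance}):
\[
D_{\xi_1} f(x_2,x_1) = f(x_2) - f(x_1) - \l \xi_1, x_2 - x_1\r .
\]
Next, apply (\ref{theta p-convex}) with base point $(x_2,\xi_2)$ and evaluation point $x_1$. This gives $D_{\xi_2} f(x_1,x_2)\ge \sigma\|x_1-x_2\|^2$, which is equivalent to
\[
f(x_2) - f(x_1) \le \l \xi_2, x_2 - x_1\r - \sigma \|x_2 - x_1\|^2 .
\]
Substituting this into the expression for $D_{\xi_1} f(x_2,x_1)$ yields
\[
D_{\xi_1} f(x_2,x_1) \le \l \xi_2 - \xi_1, x_2 - x_1\r - \sigma\|x_2-x_1\|^2 .
\]

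Finally, bound the pairing by Cauchy--Schwarz and Young's inequality in the form $ab \le \sigma b^2 + \frac{1}{4\sigma}a^2$, with $a=\|\xi_2-\xi_1\|$ and $b=\|x_2-x_1\|$. The $\sigma\|x_2-x_1\|^2$ terms cancel and leave exactly $\frac{1}{4\sigma}\|\xi_2-\xi_1\|^2$. Equivalently, one can maximize the quadratic $ab-\sigma b^2$ over $b\ge 0$, whose maximum value is $a^2/(4\sigma)$.

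The only point needing care is the second step: the strong convexity must be invoked at the \emph{other} point $x_2$ and in the direction $\xi_2$, not at $(x_1,\xi_1)$. Using it at $(x_1,\xi_1)$ would only bound $D_{\xi_1} f(x_2,x_1)$ from below. I also rely on (\ref{theta p-convex}) holding in the normalization stated in the paper, with constant $\sigma$ rather than $\sigma/2$. That normalization comes from the definition (\ref{pconvex}), and it is what produces the constant $1/(4\sigma)$. No other obstacle is expected.
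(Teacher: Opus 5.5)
Your proof is correct and is essentially the paper's argument. The paper writes the identity $D_{\xi_1} f(x_2,x_1) + D_{\xi_2} f(x_1,x_2) = \langle \xi_1-\xi_2, x_1-x_2\rangle$, bounds $D_{\xi_2} f(x_1,x_2) \ge \sigma\|x_1-x_2\|^2$, and closes with Cauchy--Schwarz and the same Young inequality; your direct substitution is that identity rearranged, and you rightly note that strong convexity must be applied at $(x_2,\xi_2)$.
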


\begin{proof}
By the definition of Bregman distance, we have
\begin{equation*}
D_{\xi_1} f(x_2, x_1) + D_{\xi_2} f(x_1, x_2) = \l \xi_1 - \xi_2, x_1 - x_2\r.
\end{equation*}
By the strong convexity of $f$ we then have $D_{\xi_2} f(x_1, x_2) \ge \sigma \|x_1 - x_2\|^2$. Therefore
\begin{eqnarray*}
\fl D_{\xi_1} f(x_2, x_1) + \sigma \|x_1 -x_2\|^2
& \le \l \xi_1 - \xi_2, x_1 -x_2\r \le \|\xi_1 - \xi_2\| \|x_1 -x_2\| \\
& \le \frac{1}{4\sigma} \|\xi_1 - \xi_2\|^2 + \sigma \|x_1 - x_2\|^2,
\end{eqnarray*}
which implies the desired estimate.
\end{proof}

\begin{lemma}\label{minixi}
If $f :\mathcal{X} \to (-\infty, \infty]$ is a proper, lower semi-continuous and strongly convex function in the sense of (\ref{pconvex}), then for any $\xi \in \mathcal{X}^*$, the minimization problem
\[
\min_{z \in \mathcal{X}} \left\{f(z) - \langle \xi, z\rangle \right\}
\]
has a unique minimizer which is denoted by $x_\xi$. Moreover $\xi \in \partial f(x_\xi)$ and
\begin{equation}\label{2.11}
\|x_\xi - x_{\tilde \xi}\| \le \frac{1}{2\sigma} \|\xi - \tilde \xi\|, \quad \forall \xi, \tilde \xi \in {\mathcal X}^*.
\end{equation}
\end{lemma}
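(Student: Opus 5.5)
The proof has three parts: existence of a minimizer, its uniqueness together with the optimality condition $\xi\in\partial f(x_\xi)$, and the Lipschitz estimate (\ref{2.11}), which will follow from Lemma \ref{lem1}.

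\emph{Existence.} Fix $\xi\in\mathcal{X}^*$ and set $g(z):=f(z)-\langle \xi, z\rangle$. Then $g$ is proper, lower semi-continuous and strongly convex with the same constant $\sigma$, because subtracting a linear functional does not change (\ref{pconvex}). I will first show that $g$ is coercive, bounded below by a quadratic, and then use the direct method.

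To get the quadratic lower bound, I note that a proper lsc convex function on a Banach space has a continuous affine minorant, by Hahn--Banach separation of the closed epigraph. Combining this with strong convexity, I take $\bar x\in\mbox{dom}(g)$ and $t=1/2$ in (\ref{pconvex}). This gives
\[
g(z)\ \ge\ 2g\bigl(\tfrac{z+\bar x}{2}\bigr)-g(\bar x)+\tfrac{\sigma}{2}\|z-\bar x\|^2 .
\]
Bounding $g(\tfrac{z+\bar x}{2})$ from below by the affine minorant yields $g(z)\ge \tfrac{\sigma}{2}\|z-\bar x\|^2 - c_1\|z\|-c_2$. Hence $g$ is bounded below, and every minimizing sequence is bounded.

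In the Hilbert (or reflexive) setting, a bounded minimizing sequence has a weakly convergent subsequence. A convex lsc function is weakly lsc, so the weak limit is a minimizer.

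Alternatively, I can avoid weak compactness altogether. Applying (\ref{pconvex}) with $t=1/2$ to two elements $z_m,z_k$ of a minimizing sequence gives
\[
\sigma\|z_m-z_k\|^2/4\le \tfrac12 g(z_m)+\tfrac12 g(z_k)-\inf g\to0 .
\]
So the minimizing sequence is Cauchy, and lsc of $g$ at its limit gives a minimizer. I prefer this route, since it works in any Banach space. The main obstacle in the whole proof is establishing that $\inf g>-\infty$, which is exactly the affine-minorant step above.

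\emph{Uniqueness and optimality.} If $z_1\ne z_2$ were both minimizers, (\ref{pconvex}) at $t=1/2$ would give $g(\tfrac{z_1+z_2}{2})<\min g$, a contradiction. So the minimizer $x_\xi$ is unique. Since $x_\xi$ minimizes $g$, we have $0\in\partial g(x_\xi)$. Directly from the definition of the subdifferential, $f(\bar x)-\langle\xi,\bar x\rangle\ge f(x_\xi)-\langle\xi,x_\xi\rangle$ for all $\bar x$, which is exactly $\xi\in\partial f(x_\xi)$.

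\emph{Estimate (\ref{2.11}).} We now have $(x_\xi,\xi),(x_{\tilde\xi},\tilde\xi)\in\mbox{graph}(\partial f)$. Adding the two Bregman distances gives
\[
D_\xi f(x_{\tilde\xi},x_\xi)+D_{\tilde\xi}f(x_\xi,x_{\tilde\xi})=\langle \tilde\xi-\xi, x_{\tilde\xi}-x_\xi\rangle\le\|\xi-\tilde\xi\|\|x_\xi-x_{\tilde\xi}\|.
\]
By (\ref{theta p-convex}), the left-hand side is at least $2\sigma\|x_\xi-x_{\tilde\xi}\|^2$. Dividing through by $\|x_\xi-x_{\tilde\xi}\|$ gives (\ref{2.11}); the case $x_\xi=x_{\tilde\xi}$ is trivial.
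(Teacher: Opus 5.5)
Your proof is correct and follows the paper's: the paper says existence and uniqueness follow ``readily'' and gets $\xi\in\partial f(x_\xi)$ from first-order optimality, your affine-minorant bound, Cauchy minimizing-sequence argument and midpoint uniqueness argument legitimately fill in that step, and your derivation of (\ref{2.11}) by adding the two Bregman distances and using (\ref{theta p-convex}) is the paper's computation (with the correct second term $D_{\tilde\xi}f(x_\xi,x_{\tilde\xi})$, where the paper has a typo). One small inaccuracy: your opening says (\ref{2.11}) will follow from Lemma \ref{lem1}, but your argument, like the paper's, uses (\ref{theta p-convex}) directly rather than Lemma \ref{lem1}.
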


\begin{proof}
Since $f$ is proper, lower semi-continuous and strongly convex, the existence and uniqueness of $x_\xi$ follows readily. By the first order optimality there holds $\xi \in \p f(x_\xi)$. Moreover, it follows from (\ref{theta p-convex}) that
\begin{eqnarray*}
2\sigma \|x_\xi - x_{\tilde \xi}\|^2
& \le D_\xi f(x_{\tilde \xi}, x_\xi) + D_{\tilde \xi} f(\xi, \tilde \xi)
= \l \xi - \tilde \xi, x_\xi - x_{\tilde \xi}\r \\
& \le \|\xi - \tilde \xi\| \|x_\xi - x_{\tilde \xi}\|,
\end{eqnarray*}
which implies (\ref{2.11}).
\end{proof}

On a Banach space $\mathcal{Y}$, we consider for $1 < r < \infty$ the convex function $y \to \|y\|^r/r$ whose subdifferential at $y$ given by
\[
J_r^\mathcal{Y}\left( y \right): = \left\{ {\xi  \in {\mathcal{Y}^*}:\left\| \xi  \right\| = {{\left\| y \right\|}^{r - 1}}~\text{and}~\left\langle {\xi ,y} \right\rangle  = {{\left\| y \right\|}^r}} \right\}
\]
which gives the duality mapping $J_r^\mathcal{Y}:\mathcal{Y} \to {2^{{\mathcal{Y}^*}}}$  with gauge function $t \to t^{r - 1}$. When $\mathcal{Y}$ is uniformly smooth in the sense that the modulus of smoothness
\[
{\rho _\mathcal{Y}}\left( \varepsilon  \right): = \frac{1}{2}\sup \left\{ {\left\| {\bar y + y} \right\| + \left\| {\bar y - y} \right\| - 2:\left\| \bar y \right\| = 1,\left\| y \right\| \le \varepsilon } \right\}
\]
satisfies $\lim_{\varepsilon \to 0} \rho_\mathcal{Y}(\varepsilon)/ \varepsilon = 0$, the duality mapping $J_r^\mathcal{Y}$ with $1<r<\infty$ is single valued and uniformly continuous on bounded sets.

\section{The adaptive stochastic heavy ball method}\label{3themethod}

In this section, we detail the selection of  the step size and momentum coefficient for the stochastic heavy ball method (\ref{shbintro}), and design an {\it a posteriori} stopping rule. We then establish the almost sure convergence and convergence in expectation of the method under this stopping criterion.
Throughout, we make the following assumptions on $\Theta$ and the operators $F_i$, where $B_\rho(x_0): = \left\{x \in \mathcal{X}: \|x - x_0\| \le \rho \right\}$ and $\mathcal{D}: = \bigcap_{i = 1}^N \text{dom}(F_i) \ne \emptyset$.

\begin{assumption}\label{5asumption theta}
$\Theta :\mathcal{X} \to \left( { - \infty ,\infty } \right]$ is a  proper, lower semi-continuous, strongly convex function satisfying (\ref{theta p-convex}) for some constant $\sigma>0$.
\end{assumption}

\begin{assumption}\label{5asumption operator}
\begin{itemize}
\item [{\rm (a)}] $\mathcal{X}$ is a Hilbert space and each $\mathcal{Y}_i$, $i=1,2,\ldots,N$, is a uniformly smooth Banach space.

\item [{\rm (b)}] There exists $\rho>0$ such that ${B_{2\rho }}\left( {{x_0}} \right) \subset \mathcal{D}$ and (\ref{nonlinear equation}) has a solution $x_*$ in $ {\rm dom}\left( \Theta  \right)$ satisfying
   ${D_{{\xi _0}}}\Theta \left( { x_*,{x_0}} \right) \le {\sigma}{\rho ^2}$.

\item [{\rm (c)}] For each $i = 1, \cdots, N$, there exists a family of bounded linear operators $\{L_i(x): \mathcal{X} \to \mathcal{Y}\}_{x\in B_{2\rho}(x_0)}$ such that $x \to L_i(x)$ is continuous on $B_{2\rho}(x_0)$; moreover
\[
\left\| L_i(x)\right\| \le B_0, \quad \forall x\in B_{2\rho}(x_0) \mbox{ and } i = 1, \cdots, N
\]
for some constant $B_0>0$ and there is $0 \le \eta  < 1$ such that
\begin{equation}\label{TCC}
\left\|F_i(x) - F_i(\bar x) - L_i(x) (x - \bar x)\right\| \le \eta \left\|F_i(x) - F_i(\bar x)\right\|
\end{equation}
for all $x,\bar x  \in B_{2\rho}(x_0)$ and $i=1, \cdots, N$.
\end{itemize}
\end{assumption}

The conditions in Assumption \ref{5asumption theta} and Assumption \ref{5asumption operator} are standard in the convergence analysis of regularization methods for ill-posed inverse problems \cite{Hanke1995A,JinWang2013,KNS2008}. They in particular guarantee the existence and uniqueness of a solution $x^\dag$ of (\ref{nonlinear equation}) satisfying (\ref{minimum solution}), see \cite[Lemma 3.2]{JinWang2013}. According to Assumption \ref{5asumption operator} (b), there holds
\begin{equation}\label{6xdag}
D_{\xi _0}\Theta(x^\dag, x_0) \le D_{\xi _0}\Theta(x_*, x_0) \le \sigma \rho ^2
\end{equation}
which together with Assumption \ref{5asumption theta} yields $\left\|x^\dag - x_0 \right\| \le \rho$, i.e., $x^\dag \in B_\rho(x_0)\cap \text{dom}(\Theta)$.

Next we will consider the  stochastic heavy ball method  (\ref{shbintro})  and discuss how to choose the step-size $t_n^\delta$ and the momentum coefficient $\beta_n^\delta$. We take $\xi_{-1}^\delta=\xi_0^\delta=\xi_0$ and $x_0^\delta=x_0$. For $n \ge 0$, the index $i_n$ is picked randomly from $\left\{ {1, \cdots, N} \right\}$ via a uniform distribution. For ease of notation, we denote
\begin{equation*}
\mr_n^{\delta}:= F_{i_n}\left(x_n^\delta\right) - y_{i_n}^\delta, \quad
\g_n^\delta:=L_{i_n}\left(x_n^\delta\right)^* J_r^{{\cal Y}_{i_n}} \left(\mr_n^\delta\right), \quad
\m_n^\d := \xi_n^\d - \xi_{n-1}^\d,
\end{equation*}
then the method (\ref{shbintro}) can be formulated as
\begin{equation}\label{shb}
\eqalign{
\xi_{n + 1}^\delta  = \xi_n^\delta - t_n^\delta \g_n^\delta + \beta_n^\delta \m_n^\delta, \\
x_{n + 1}^\delta = \arg \min_{x \in {\cal X}} \left\{\Theta(x) - \langle \xi_{n + 1}^\delta, x \rangle \right\}.
}
\end{equation}
In order for the method (\ref{shb}) to have fast convergence, the parameters $t_n^\delta$ and $\beta_n^\delta$ should be selected carefully. Let $\hat x$ be any solution of (\ref{nonlinear equation}) in ${B_{2\rho} }\left( {{x_0}} \right)\cap {\rm dom}\left( \Theta  \right)$, we consider the decay of the Bregman distance
${D_{\xi _n^\delta }}\Theta \left( {\hat x,x_n^\delta } \right)$ with respect to $n$, where ${D_{\xi _n^\delta }}\Theta \left( {\hat x,x_n^\delta } \right)$ measures the closeness of $x_n^\delta$ to $\hat x$. From (\ref{2.9}) and Lemma \ref{lem1} it follows that
\begin{eqnarray*}
D_{\xi_{n + 1}^\delta}\Theta \left(\hat x,x_{n + 1}^\delta\right) - D_{\xi_n^\delta }\Theta \left(\hat x, x_n^\delta \right) \\
= D_{\xi_{n+1}^\d} \Theta(x_n^\d, x_{n+1}^\d) + \l \xi_{n+1}^\d - \xi_n^\d, x_{n}^\d - \hat x\r \\
\le \frac{1}{{4\sigma }}{\left\|\m_{n + 1}^\delta\right\|^2} + \left\langle \m_{n + 1}^\delta, x_n^\delta - \hat x \right\rangle.
\end{eqnarray*}
By the definition of $\xi_{n+1}^\delta$ and the polarization identity, we further have
\begin{eqnarray*}
& D_{\xi_{n + 1}^\delta}\Theta \left(\hat x, x_{n + 1}^\delta\right) - D_{\xi_n^\delta}\Theta\left(\hat x, x_n^\delta\right)\\
& \le \frac{1}{4\sigma} \left\|- t_n^\delta \g_n^\delta + \beta_n^\delta \m_n^\delta\right\|^2 + \left\langle - t_n^\delta \g_n^\delta  + \beta _n^\delta \m_n^\delta, x_n^\delta - \hat x \right\rangle \\
& = \frac{1}{4\sigma} \left(t_n^\delta\right)^2 \left\|\g_n^\delta\right\|^2
+ \frac{1}{4\sigma } \left(\beta_n^\delta\right)^2 \left\|\m_n^\delta\right\|^2 - \frac{1}{2\sigma} t_n^\d \beta_n^\d \l \g_n^\d, \m_n^\d\r \\
& \quad \, + t_n^\delta \left\langle \g_n^\delta, \hat x-x_n^\delta \right\rangle + \beta_n^\delta \left\langle \m_n^\delta, x_n^\delta  - \hat x \right\rangle.
\end{eqnarray*}
Assume that $x_n^\delta \in B_{2\rho}(x_0)$, utilizing (\ref{TCC}) in Assumption \ref{5asumption operator}(c), $\left\|y_{i_n}^\delta  - y_{i_n}\right\| \le \delta_{i_n}$ and the property of the duality mapping $J_r^{\mathcal{Y}_{i_n}}$, we can obtain
\begin{eqnarray}\label{undelta}
\left\langle \g_n^\delta, \hat x - x_n^\delta \right\rangle
& = \left\langle J_r^{{\cal Y}_{i_n}} \left(\mr_n^\delta\right), y_{i_n}^\delta - F_{i_n} \left(x_n^\delta\right) \right\rangle + \left\langle J_r^{{\cal Y}_{i_n}} \left(\mr_n^\delta\right), y_{i_n} - y_{i_n}^\d \right\rangle \nonumber\\
& \quad \, - \left\langle J_r^{{\cal Y}_{i_n}} \left(\mr_n^\delta\right), y_{i_n} - F_{i_n}\left(x_n^\delta\right) - L_{i_n}\left(x_n^\delta\right) \left(\hat x - x_n^\delta\right) \right\rangle \nonumber\\
& \le - \left\|\mr_n^\delta\right\|^r + \left\|\mr_n^\delta\right\|^{r - 1} \left(\delta _{i_n} + \eta \left\| y_{i_n} - F_{i_n}\left(x_n^\delta\right)\right\| \right) \nonumber\\
& \le - \left\|\mr_n^\delta\right\|^r + \left\|\mr_n^\delta\right\|^{r - 1} \left(\left(1 + \eta\right)\delta_{i_n} + \eta \left\|\mr_n^\delta\right\|\right) \nonumber \\
& = - (1 - \eta) \left\|\mr_n^\delta\right\|^r + (1 + \eta) \delta_{i_n} \left\|\mr_n^\delta\right\|^{r - 1}.
\end{eqnarray}
Therefore, we derive the estimate
\begin{eqnarray}\label{decay}
& D_{\xi_{n + 1}^\delta}\Theta \left(\hat x, x_{n + 1}^\delta\right) - D_{\xi_n^\delta}\Theta \left(\hat x, x_n^\delta\right) \nonumber \\
& \le \frac{1}{4\sigma} \left(t_n^\delta\right)^2 \left\|\g_n^\delta\right\|^2 - (1 - \eta) t_n^\delta \left\| \mr_n^\delta \right\|^r + (1 + \eta) \delta_{i_n} t_n^\delta \left\|\mr_n^\delta\right\|^{r - 1} \nonumber\\
& \quad \, + \frac{1}{4\sigma} \left(\beta_n^\delta\right)^2 \left\|\m_n^\delta\right\|^2 - \frac{1}{2\sigma} t_n^\d \beta_n^\d \l \g_n^\d, \m_n^\d\r + \beta_n^\delta \left\langle \m_n^\delta, x_n^\delta - \hat x \right\rangle.
\end{eqnarray}
To promote rapid convergence of the method  (\ref{shb}), inspired by the work \cite{Jin-2025,Jin-Huang-2024}, one could choose $t_n^\delta$ and $\beta_n^\delta$ such that the right hand side of (\ref{decay}) is as small as possible. We first determine $t_n^\delta$ by assuming $\beta_n^\delta=0$. In this case, only the first three terms on the right hand side of (\ref{decay}) remain. To ensure that the sum of these three terms is negative,  $t_n^\delta$ should be chosen to satisfy
\[
0 \le t_n^\delta < \frac{4\sigma \left((1 - \eta)\left\|\mr_n^\delta\right\| - (1 + \eta)\delta_{i_n}\right) \left\| \mr_n^\delta\right\|^{r - 1}}{\left\|\g_n^\delta\right\|^2}
\]
provided that $\left\|\mr_n^\delta\right\| > \tau \delta_{i_n}$ with given $\tau>(1 + \eta)/(1-\eta)$. This leads us to choose
\begin{equation}\label{tndelta}
\fl t_n^\delta  = \left\{ \begin{array}{lll}
\min \left\{\frac{\mu_0 \left((1 - \eta)\left\|\mr_n^\delta\right\| - (1 + \eta) \delta_{i_n}\right) \left\| \mr_n^\delta\right\|^{r - 1}}{\left\|\g_n^\delta \right\|^2}, \mu_1 \left\|\mr_n^\delta\right\|^{2 - r} \right\}, & \text{if } \left\|\mr_n^\delta\right\| > \tau \delta_{i_n},\\[1mm]
0, & \text{otherwise}
\end{array} \right.
\end{equation}
with $0<\mu_0< 4 \sigma$  and $\mu_1>0$. Substituting such $t_n^\delta$ into (\ref{decay}), there follows
\begin{eqnarray*}
& D_{\xi_{n + 1}^\delta}\Theta \left(\hat x, x_{n + 1}^\delta\right) - D_{\xi _n^\delta}\Theta \left(\hat x, x_n^\delta\right)\\
& \le - \left(1 - \frac{\mu _0}{4\sigma}\right) t_n^\delta \left((1 - \eta) \left\|\mr_n^\delta\right\| - (1 + \eta) \delta_{i_n}\right) \left\|\mr_n^\delta\right\|^{r - 1}\\
& \quad \, + \frac{1}{4\sigma} \left(\beta _n^\delta\right)^2 \left\|\m_n^\delta\right\|^2 - \frac{1}{2\sigma} t_n^\d \beta_n^\d \l \g_n^\d, \m_n^\d\r + \beta_n^\delta \left\langle \m_n^\delta, x_n^\delta  - \hat x \right\rangle.
\end{eqnarray*}
Next we consider the selection of $\beta_n^\delta$. Before that, we first give an estimate on the term
$$
\gamma _n^\delta : = \left\langle \m_n^\delta, x_n^\delta  - \hat x \right\rangle,
$$
which is uncomputable in practice because it depends on the unknown solution $\hat x$. By virtue of the definition of $\xi_n^\delta$ and (\ref{undelta}),  the term $\gamma_n^\delta$ can be estimated by
\begin{eqnarray*}
\gamma _n^\delta
& = \left\langle \m_n^\delta, x_n^\delta - x_{n - 1}^\delta \right\rangle + \left\langle \m_n^\delta, x_{n - 1}^\delta  - \hat x \right\rangle \\
& =\left\langle \m_n^\delta, x_n^\delta - x_{n - 1}^\delta \right\rangle + \left\langle - t_{n - 1}^\delta \g_{n - 1}^\delta  + \beta_{n - 1}^\delta \m_{n - 1}^\delta, x_{n - 1}^\delta  - \hat x \right\rangle\\
&\le \left\langle \m_n^\delta, x_n^\delta - x_{n - 1}^\delta \right\rangle - (1 - \eta) t_{n - 1}^\delta \left\| \mr_{n - 1}^\delta\right\|^r \\
& \quad \, + (1 + \eta) \delta_{i_{n - 1}} t_{n - 1}^\delta \left\|\mr_{n - 1}^\delta\right\|^{r - 1} + \beta_{n - 1}^\delta \gamma_{n - 1}^\delta.
\end{eqnarray*}
This motivates us to define $\left\{ {\tilde \gamma _n^\delta } \right\}$ by setting $\tilde \gamma _0^\delta=0$ and, for $n\ge 1$,
\begin{eqnarray}\label{tildegamma}
\tilde \gamma_n^\delta  &= \left\langle \m_n^\delta, x_n^\delta - x_{n - 1}^\delta\right\rangle - (1 - \eta) t_{n - 1}^\delta \left\|\mr_{n - 1}^\delta\right\|^r \nonumber \\
& \quad \, + (1 + \eta) \delta_{i_{n - 1}} t_{n - 1}^\delta \left\|\mr_{n - 1}^\delta\right\|^{r - 1} + \beta_{n - 1}^\delta \tilde \gamma_{n - 1}^\delta.
\end{eqnarray}
Suppose that $x_k^\delta \in B_{2\rho}(x_0)$ and $\beta_k^\d \ge 0$ for all $ 0\le k<n$, then one may use an induction argument to show that $\gamma_k^\delta \le \tilde \gamma_k^\delta$ for all $ 0\le k \le n$. Therefore
\begin{eqnarray}\label{betan}
& D_{\xi_{n + 1}^\delta}\Theta \left(\hat x, x_{n + 1}^\delta\right) - D_{\xi _n^\delta}\Theta \left(\hat x, x_n^\delta\right) \nonumber\\
& \le  - \left(1 - \frac{\mu_0}{4\sigma}\right) t_n^\delta \left((1 - \eta) \left\|\mr_n^\delta\right\| - (1 + \eta) \delta_{i_n}\right) \left\|\mr_n^\delta\right\|^{r - 1} \nonumber \\
& \quad \, + \frac{1}{4\sigma} \left(\beta_n^\delta\right)^2 \left\|\m_n^\delta\right\|^2 - \frac{1}{2\sigma} t_n^\d \beta_n^\d \l \g_n^\d, \m_n^\d \r + \beta_n^\delta \gamma_n^\delta \nonumber \\
& \le  - \left(1 - \frac{\mu_0}{4\sigma}\right) t_n^\delta \left((1 - \eta) \left\|\mr_n^\delta\right\| - (1 + \eta) \delta_{i_n} \right) \left\|\mr_n^\delta\right\|^{r - 1} \nonumber \\
& \quad \, + \frac{1}{4\sigma} \left(\beta _n^\delta\right)^2 \left\|\m_n^\delta\right\|^2 - \frac{1}{2\sigma} t_n^\d \beta_n^\d \l\g_n^\d, \m_n^\d\r + \beta _n^\delta \tilde \gamma _n^\delta.
\end{eqnarray}
A natural approach is to choose $\beta_n^\delta$ such that the right hand side of (\ref{betan}) is minimized over the interval $\left[ {0,\beta } \right]$, where $0<\beta\le \infty$ is a user chosen parameter. This gives
\begin{equation}\label{betagamma}
\beta_n^\delta=\min \left\{\max\left\{0, \frac{t_n^\d \l \g_n^\d, \m_n^\d\r - 2\sigma \tilde \gamma_n^\delta}{\left\| \m_n^\delta\right\|^2} \right\}, \beta \right\}
\end{equation}
in case $\m_n^\delta \ne 0$. Note that this $\beta_n^\delta$ is the minimizer of the function $s \to {h_n}\left( s \right)$ over $\left[ {0,\beta } \right]$, where
\[
{h_n}(s) = \frac{1}{4\sigma} s^2 \|\m_n^\delta\|^2
- \frac{1}{2\sigma} s t_n^\d  \l \g_n^\d, \m_n^\d\r + s \tilde \gamma_n^\delta.
\]
When using (\ref{betagamma}) to compute $\beta_n^\d$, stability considerations naturally require that the denominator $\|\m_n^\d\|^2$ not be too small. Moreover, we would like $\beta_n^\d$ to remain suitably large so that the momentum term can make a meaningful contribution to acceleration. To address these issues, we compute $\beta_n^\d$ only when
\begin{equation}\label{beta0}
\|\m_n^\d\| > \upsilon_0 \d_{i_n} \quad \mbox{and} \quad
\tilde \gamma_n^\d - \frac{t_n^\d}{2\sigma} \l \g_n^\d, \m_n^\d \r < - \upsilon_1 \d_{i_n} \|\m_n^\d\|^2,
\end{equation}
where $\upsilon_0$ and $\upsilon_1$ are user-defined small positive constants. In this case, we modify $\beta_n^\d$ as follows:
\[
\beta_n^\delta=\min \left\{ \frac{t_n^\d \l \g_n^\d, \m_n^\d \r - 2\sigma \tilde \gamma_n^\delta}{\|\m_n^\delta\|^2}, \beta \right\}
\]
and set $\beta_n^\d = 0$ whenever (\ref{beta0}) is not satisfied.

With the above adaptive strategy for selecting $t_n^\delta$ and $\beta_n^\delta$ in place, we now consider the stochastic heavy ball method (\ref{shbintro}). To obtain a reasonable approximate solution, the iteration must be terminated appropriately. One of the most widely used {\it a posteriori} stopping rules is the discrepancy principle, which determines a stopping index $n_\d$ such that
\begin{equation}\label{approximatesolution}
\left\| F_i\left(x_{n_\delta}^\delta \right)-y_i^\delta \right\| \le \tau \delta_i,  \quad \forall i=1,\cdots,N
\end{equation}
for the first time. However, it is important to note that directly evaluating all the residuals
$\left\| F_i\left(x_n^\delta \right) - y_i^\delta \right\|$ for all $i$ at each iteration, or even at fixed intervals, can result in prohibitive computational costs. Hence, it is essential to incorporate the discrepancy principle into (\ref{shbintro}) in a way that is both implementable and computationally efficient.
To this end, we adopt the following effective strategy: initialize a set $I_0(y^\delta):=\left\{ {1,\cdots,N} \right\}$. At iteration $n$, if $\left\| \mr_n^\delta \right\| \le \tau \delta _{i_n}$ and $\beta_n^\delta=0$, then the index $i_n$ is removed from $I_n(y^\delta)$, i.e., $I_{n+1}(y^\delta):=I_{n}(y^\delta)\setminus \{i_n\}$; otherwise, the set is reset to $\left\{ {1,\cdots,N} \right\}$. The algorithm terminates once $I_{n_\delta}(y^\delta)$ becomes empty, and $x_{n_\delta}^\delta$ is then taken as the  approximate solution. The details are summarized in Algorithm~\ref{shbnoisy} below.

\begin{algorithm}[Adaptive SHB method with noisy data]\label{shbnoisy}
Let $\mu_0>0$, $\mu_1>0$, $\tau>1$, $\upsilon_0 >0$, $\upsilon_1>0$, $0<\beta\le \infty$. Pick an initial guess $x_0 \in {\rm dom}\left( {\partial  \Theta } \right)$ and $\xi _0  \in \partial \Theta\left( {x_0} \right)$. Let $\xi_{-1}^\delta=\xi_0^\delta:=\xi_0$ and $x_0^\delta:=x_0$. Set $I_0(y^\delta):=\left\{ {1, \cdots, N} \right\}$ and $\tilde \gamma _0^\delta:=0$. For $n\ge 0$, do the following:

\begin{itemize}
\item[\emph{(i)}]  Sample an index  ${i_n} \in \left\{ {1, \cdots, N} \right\}$ at random via a uniform distribution;

\item[\emph{(ii)}]  Compute $\mr_n^{\delta}:=F_{i_n} \left(x_n^\delta\right) - y_{i_n}^\delta$, $\g_n^\delta:= L_{i_n}\left(x_n^\delta\right)^* J_r^{{\cal Y}_{i_n}} \left(\mr_n^\delta\right)$, and determine $t_n^\delta$ by (\ref{tndelta});

\item[\emph{(iii)}] Set $\m_n^\d := \xi_n^\d - \xi_{n-1}^\d$, compute $\tilde \gamma _n^\delta $ by (\ref{tildegamma})
and determine $\beta_n^\d$ by
\[
\beta _n^\delta  = \left\{ \begin{array}{lll}
\min \left\{ \frac{t_n^\d \l \g_n^\d, \m_n^\d\r - 2 \sigma\tilde \gamma_n^\delta}{\|\m_n^\delta\|^2},\beta \right\}, & \emph{if (\ref{beta0}) holds}, \\
0, & \emph{otherwise};
\end{array} \right.
\]

\item[\emph{(iv)}] Update $\xi _{n + 1}^\delta$ and $x_{n + 1}^\delta$ by
\begin{eqnarray*}
\xi_{n + 1}^\delta  = \xi_n^\delta  - t_n^\delta \g_n^\delta + \beta_n^\delta \m_n^\delta, \\[1mm]
x_{n + 1}^\delta  = \arg \min_{x \in {\cal X}} \left\{\Theta(x) - \left\langle \xi_{n + 1}^\delta, x \right\rangle \right\};
\end{eqnarray*}

\item[\emph{(v)}] Set
\[
{I_{n + 1}}({y^\delta }): = \left\{ \begin{array}{lll}
{I_n}({y^\delta })\backslash \left\{ {{i_n}} \right\}, & \emph{if } t_n^\d =0 \emph{ and } \beta_n^\d =0, \\[2mm]
\left\{ {1,\cdots,N} \right\}, & \emph{otherwise};
\end{array} \right.
\]

\item[\emph{(vi)}] Let ${n_\delta}$ be the first integer such that $I_{n_\delta}(y^\delta)=\emptyset$ and use $x_{n_\delta}^\delta$ as  an approximate solution.
\end{itemize}
\end{algorithm}

\begin{remark}\label{remarkal}
{\rm
At each iteration, the primary computational cost of Algorithm \ref{shbnoisy} arises from evaluating $\mr_n^\delta$ and $\g_n^\delta$; the effort required to calculate $t_n^\delta$,  $\tilde \gamma _n^\delta $ and $\beta_n^\delta$ is comparatively  negligible.
The discrepancy principle (\ref{approximatesolution}) is embedded in the set $I_n(y^\delta)$, which avoids the expense of calculating $\|F_i\left(x_n^\delta \right)-y_i^\delta\|$ for all $i\in \left\{ {1,\cdots,N} \right\}$ and therefore is quite efficient.
}
\end{remark}

\begin{remark}
{\rm In step (i) of Algorithm \ref{shbnoisy}, if the selected index $i_n$, $n\ge 1$, lies outside $I_n(y^\d)$, then $\xi_{n}^\d = \xi_{n-1}^\d$ and $x_{n}^\d = x_{n-1}^\d$. Consequently $t_n^\d = 0$, $m_n^\d =0$ and therefore $\beta_n^\d=0$. Hence, executing the remaining steps does not contribute to the progress toward the desired solution. In practice, this means that when implementing the algorithm, it suffices to choose $i_n \in I_n(y^\d)$
at random according to a uniform distribution. We present Algorithm \ref{shbnoisy} in its current form because this formulation is more convenient for the convergence analysis.
}
\end{remark}

For Algorithm \ref{shbnoisy}, due to the randomness of the indices $i_n$, the iterates $\xi_n^\delta$ and $x_n^\delta$ are stochastic. To facilitate the convergence analysis of Algorithm \ref{shbnoisy}, we first set up the underlying probabilistic framework. Let $\{{\cal F}_n: n \ge 0\}$ denote the natural filtration associate with Algorithm \ref{shbnoisy}, where $\mathcal{F}_n:=\sigma\left(i_0, \cdots ,i_{n - 1} \right)$ is the $\sigma$-algebra generated by the sequence of random indices $i_0, \cdots, i_{n-1}$. Let $\mathbb{E}$ denote the expectation with respect to this filtration; see \cite{Bhat_2007}. By the tower property of conditional expectation, for any random variable $\phi$, we have
\[
\mathbb{E}[\phi] = \mathbb{E}\left[ \mathbb{E}\left[\phi | \mathcal{F}_n \right] \right].
\]
This property will be repeatedly used in our subsequent analysis. In the following result, we show that Algorithm \ref{shbnoisy} is well-defined, in the sense that the iterates $x_n^\delta$ remain in the ball $B_{2\rho}(x_0)$ for all $n\ge 0$, and that the algorithm terminates in finitely many steps almost surely.

\begin{proposition}\label{smonotoniciy}
Let Assumptions \ref{5asumption theta} and \ref{5asumption operator} hold. Assume that
\begin{equation*}
0<\mu_0<4\sigma \quad \mbox{ and } \quad \tau  > (1 + \eta)/(1 - \eta).
\end{equation*}
Then $x_n^\delta \in {B_{2\rho} }\left( {{x_0}} \right)$ for  $n\ge 0$ and, for any solution $\hat x$ of (\ref{nonlinear equation}) in ${B_{2\rho} }\left( {{x_0}} \right)\cap {\rm dom}\left( \Theta  \right)$, there holds
\begin{equation}\label{bregdes1}
\fl D_{\xi_{n + 1}^\delta } \Theta \left(\hat x, x_{n + 1}^\delta\right) - D_{\xi_n^\delta} \Theta \left(\hat x, x_n^\delta\right) \le - c_0 t_n^\delta \left\|\mr_n^\delta\right\|^r - \frac{1}{2} \upsilon_1 \d_{i_n} \beta_n^\d \|\m_n^\d\|^2
\end{equation}
for all integers $n\ge 0$, where $c_0:=\left(1 - \frac{\mu_0}{4\sigma}\right)\left(1 - \eta  - \frac{1 + \eta}{\tau}\right)>0$. Moreover, Algorithm \ref{shbnoisy} terminates after finite steps almost surely.
\end{proposition}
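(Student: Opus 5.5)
The plan is to prove the first two claims together by induction on $n$. The third claim, almost sure termination, comes afterwards from the summability that (\ref{bregdes1}) provides.

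\textbf{Induction.} The hypothesis at stage $n$ is that $x_k^\d\in B_{2\rho}(x_0)$ for $0\le k\le n$ and that (\ref{bregdes1}) holds for $k<n$.
\begin{itemize}
\item The base case is trivial since $x_0^\d=x_0$.
\item Given the hypothesis, the derivation preceding the algorithm applies verbatim. By construction $\beta_k^\d\ge 0$ for every $k$, because whenever (\ref{beta0}) holds the quotient in step (iii) is strictly positive. Induction on $k$ using (\ref{undelta}) then gives $\gamma_k^\d\le\tilde\gamma_k^\d$ for $k\le n$, which yields (\ref{betan}).
\item For the first term, the bound $\|\mr_n^\d\|>\tau\d_{i_n}$ (whenever $t_n^\d>0$) gives $(1-\eta)\|\mr_n^\d\|-(1+\eta)\d_{i_n}\ge (1-\eta-\frac{1+\eta}{\tau})\|\mr_n^\d\|$. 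This produces $-c_0t_n^\d\|\mr_n^\d\|^r$.
\item For the momentum part, if $\beta_n^\d=0$ it vanishes. Otherwise (\ref{beta0}) holds and $h_n(\beta_n^\d)=\frac{s}{4\sigma}\|\m_n^\d\|^2\bigl(s-\bar s\bigr)$ with $s=\beta_n^\d$ and $\bar s=(t_n^\d\l\g_n^\d,\m_n^\d\r-2\sigma\tilde\gamma_n^\d)/\|\m_n^\d\|^2$. Since $s\le\bar s$, we have $s-\bar s\le -\bar s/2$ whenever $s\le \bar s/2$. In general $s^2\le s\bar s$, so
\[
h_n(s)=\frac{s^2}{4\sigma}\|\m_n^\d\|^2+s\Bigl(\tilde\gamma_n^\d-\frac{t_n^\d}{2\sigma}\l\g_n^\d,\m_n^\d\r\Bigr)\le \frac{s}{2}\Bigl(\tilde\gamma_n^\d-\frac{t_n^\d}{2\sigma}\l\g_n^\d,\m_n^\d\r\Bigr)<-\frac12\upsilon_1\d_{i_n}s\|\m_n^\d\|^2 .
\]
This is exactly the second term of (\ref{bregdes1}).
\item To keep $x_{n+1}^\d$ in the ball, apply (\ref{bregdes1}) with $\hat x=x^\dag$ and telescope: $D_{\xi_{n+1}^\d}\Theta(x^\dag,x_{n+1}^\d)\le D_{\xi_0}\Theta(x^\dag,x_0)\le\sigma\rho^2$. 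Strong convexity (\ref{theta p-convex}) then gives $\|x_{n+1}^\d-x^\dag\|\le\rho$, and with $\|x^\dag-x_0\|\le\rho$ we get $x_{n+1}^\d\in B_{2\rho}(x_0)$.
\item Because (\ref{bregdes1}) for $\hat x=x^\dag$ is what confines $x_{n+1}^\d$, I will run the induction with $x^\dag$ first and then note that the estimate holds for every admissible $\hat x$.
\end{itemize}

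\textbf{Finite termination.} Telescoping (\ref{bregdes1}) gives, surely,
\[
\sum_n c_0t_n^\d\|\mr_n^\d\|^r+\tfrac12\upsilon_1\sum_n\d_{i_n}\beta_n^\d\|\m_n^\d\|^2\le\sigma\rho^2 .
\]
I then bound each active step from below.
\begin{itemize}
\item If $t_n^\d>0$, then $\|\mr_n^\d\|>\tau\d_{i_n}$ and $\|\g_n^\d\|\le B_0\|\mr_n^\d\|^{r-1}$. Hence $t_n^\d\ge\min\{\mu_0(1-\eta-\frac{1+\eta}{\tau})/B_0^2,\mu_1\}\|\mr_n^\d\|^{2-r}$, so $t_n^\d\|\mr_n^\d\|^r\ge c\|\mr_n^\d\|^2\ge c\tau^2\d_{\min}^2$.
\item If $\beta_n^\d>0$, then $\|\m_n^\d\|>\upsilon_0\d_{i_n}$. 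The condition (\ref{beta0}) bounds $\beta_n^\d$ below by $\min\{2\sigma\upsilon_1\d_{\min},\beta\}$, because the quotient exceeds $2\sigma\upsilon_1\d_{i_n}$.
\end{itemize}
Each step with $(t_n^\d,\beta_n^\d)\ne(0,0)$ therefore costs at least a fixed $\epsilon>0$. So there are at most $M=\lfloor\sigma\rho^2/\epsilon\rfloor$ such steps, and every other step removes an index.

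After the last active step, only the current index set evolves. Any window of $N$ consecutive steps in which the sampled indices form a permutation of $\{1,\dots,N\}$ and no step is active empties $I$. I therefore split time into disjoint blocks of length $N$. Each block is a permutation with probability $N!/N^N>0$, independently across blocks. At most $M$ blocks contain an active step, so almost surely some permutation block is fully inactive. If $I$ is reset at the block start, or at least not enlarged, it then becomes empty. Hence $n_\d<\infty$ almost surely; a Borel--Cantelli argument on infinitely many permutation blocks does this.

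The main obstacle is the momentum lower bound and the careful inequality $h_n(\beta_n^\d)\le-\frac12\upsilon_1\d_{i_n}\beta_n^\d\|\m_n^\d\|^2$, including the case $\beta_n^\d=\beta$. I handle this via $s\le\bar s$ as above.
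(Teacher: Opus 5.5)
Your induction for $x_n^\delta\in B_{2\rho}(x_0)$ and (\ref{bregdes1}) is essentially the paper's argument. It makes the same appeal to (\ref{betan}), the same use of $\|\mathfrak{r}_n^\delta\|>\tau\delta_{i_n}$ to extract $c_0$, and the same bound $s^2\le s\bar s$ to get (\ref{hk}). One minor slip: the factorization is $h_n(s)=\frac{s}{4\sigma}\|\mathfrak{m}_n^\delta\|^2(s-2\bar s)$, not $(s-\bar s)$. The aside about $s\le\bar s/2$ is also unnecessary. The displayed chain you actually rely on is correct.

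For finite termination you take a genuinely different route. The paper argues in expectation:
\begin{itemize}
\item It takes expectations of (\ref{bregdes1}).
\item It uses the tower property to bound $\mathbb{E}[t_n^\delta\|\mathfrak{r}_n^\delta\|^r]$ below by a multiple of $\frac1N\mathbb{E}\big[\sum_{i}\|F_i(x_n^\delta)-y_i^\delta\|^2\chi_{\{\|F_i(x_n^\delta)-y_i^\delta\|>\tau\delta_i\}}\big]$.
\item It shows $\mathbb{P}(\Psi)\le C/(k+1)$ for every $k$, where $\Psi$ is the event that at every step some equation violates the discrepancy principle or $\beta_n^\delta>0$.
\item It leaves tacit the last step: once all residuals are small and $\beta_n^\delta=0$, the iterates freeze and $I_n(y^\delta)$ empties as soon as the remaining indices are drawn.
\end{itemize}
You instead argue pathwise. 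Telescoping gives a sure, deterministic bound $M$ on the number of steps with $(t_n^\delta,\beta_n^\delta)\neq(0,0)$, and these are exactly the steps that reset the index set. Borel--Cantelli over the independent length-$N$ blocks that happen to be permutations then yields a block with no active step, and that block empties $I$.

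Your version avoids conditional expectations and makes the emptying step explicit. It is also quantitative. Let $T_{M+1}$ be the index of the $(M+1)$-st permutation block. Then $n_\delta\le N\,T_{M+1}$, and $T_{M+1}$ is negative binomial with success probability $N!/N^N$, so in fact $\mathbb{E}[n_\delta]<\infty$.

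The paper's version instead ties termination directly to the discrepancy principle (\ref{approximatesolution}) holding for all $N$ equations at once. Its tower-property step is also the same device reused in Lemma \ref{descentexact}.
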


\begin{proof}
We first inductively show that
\[
x_n^\delta \in B_{2\rho}\left(x_0\right) \quad \text{and} \quad D_{\xi_{n + 1}^\delta} \Theta \left(\hat x, x_{n + 1}^\delta\right) \le D_{\xi _0} \Theta \left(\hat x, x_0\right)
\]
for $n\ge 0$, where $\hat x$ denotes any solution of (\ref{nonlinear equation}) in $B_{2\rho}(x_0)\cap \text{dom} (\Theta)$. Since $x_0^\delta=x_0$, it is trivial for $n=0$. Assume that the results hold for $0\le n\le k$. By using (\ref{betan}), we can derive that
\begin{eqnarray*}
& D_{\xi_{k + 1}^\delta}\Theta \left(\hat x, x_{k + 1}^\delta\right) - D_{\xi_k^\delta} \Theta \left(\hat x, x_k^\delta\right)\\
& \le  - \left(1 - \frac{\mu_0}{4\sigma}\right) t_k^\delta \left((1 - \eta) \left\|\mr_k^\delta\right\| - (1 + \eta) \delta_{i_k}\right) \left\|\mr_k^\delta\right\|^{r - 1} + h_k\left(\beta_k^\delta\right).
\end{eqnarray*}
Further, if $\left\|\mr_k^\delta \right\|> \tau \delta _{i_k}$, it follows that
\begin{eqnarray}\label{Theta0}
& D_{\xi_{k + 1}^\delta}\Theta \left(\hat x, x_{k + 1}^\delta\right) - D_{\xi_k^\delta}\Theta \left(\hat x, x_k^\delta\right) \nonumber \\
& \le - \left(1 - \frac{\mu _0}{4\sigma}\right) \left(1 - \eta  - \frac{1 + \eta}{\tau}\right) t_k^\delta \left\| \mr_k^\delta\right\|^r + h_k\left(\beta_k^\delta\right),
\end{eqnarray}
which holds automatically when $\|\mr_k^\delta\|\le \tau \delta_{i_k}$, since (\ref{tndelta}) forces $t_k^\delta=0$.
We next claim that
\begin{equation}\label{hk}
h_k(\beta_k^\d) \le -\frac{1}{2} \upsilon_1 \d_{i_k} \beta_k^\d \|\m_k^\d\|^2.
\end{equation}
When $\|\m_k^\d\| \le \upsilon_0 \d_{i_k}$ or $\tilde \gamma_k^\d - \frac{t_k^\d}{2\sigma} \l \g_k^\d, \m_k^\d\r \ge - \upsilon_1 \d_{i_k} \|\m_k^\d\|^2$, we have $\beta_k^\d = 0$ and thus (\ref{hk}) holds trivially. Otherwise, by the definition of $\beta_k^\d$ we have $0 \le \beta_k^\d \le (t_k^\d \l \g_k^\d, \m_k^\d\r - 2 \sigma \tilde \gamma_k^\d)/\|\m_k^\d\|^2$ and therefore
\begin{eqnarray*}
h_k(\beta_k^\d) & = \frac{1}{4\sigma} \left(\beta_k^\d\right)^2 \|\m_k^\d\|^2
+ \beta_k^\d \left(\tilde \gamma_k^\d - \frac{1}{2\sigma} t_k^\d \left\l \g_k^\d, \m_k^\d \right\r\right) \\
& \le \frac{1}{2} \beta_k^\d \left(\tilde \gamma_k^\d - \frac{1}{2\sigma} t_k^\d \left\l \g_k^\d, \m_k^\d \right\r\right) \\
& \le - \frac{1}{2} \upsilon_1 \d_{i_k} \beta_k^\d\|\m_k^\d\|^2,
\end{eqnarray*}
which shows (\ref{hk}) again. Combining (\ref{hk}) with (\ref{Theta0}) gives (\ref{bregdes1}) for $n = k$ immediately. By using the induction hypothesis, (\ref{bregdes1}) particularly yields
\begin{equation*}
D_{\xi_{k+1}^\delta}\Theta(\hat x, x_{k+1}^\delta) \le D_{\xi_k^\delta}\Theta(\hat x, x_k^\delta) \le D_{\xi _0}\Theta (\hat x, x_0).
\end{equation*}
Taking $\hat x=x^\dag$, we further get that
\[
D_{\xi_{k+1}^\delta} \Theta(x^\dag, x_{k+1}^\delta) \le D_{\xi_k^\delta} \Theta(x^\dag, x_k^\delta) \le D_{\xi_0} \Theta(x^\dag, x_0) \le \sigma \rho ^2,
\]
which, together with the strong convexity of $\Theta$, implies that $\left\|x^\dag - x_{k+1}^\delta\right\| \le \rho $ and $\left\|x^\dag - x_0\right\| \le \rho$. Thus, $\left\|x_{k+1}^\delta - x_0\right\| \le 2\rho $, i.e., $x_{k+1}^\delta \in B_{2\rho}(x_0)$, which completes the inductive proof. Consequently, (\ref{bregdes1}) holds for all $n\ge 0$.

Finally we prove $\mathbb{P}\left(n_\delta = \infty\right) = 0$. To see this, we consider the event
\[
\fl \Psi  := \left\{\max\left\{\left\|F_i\left(x_n^\delta\right) - y_i^\delta\right\| - \tau \delta _i: i \in \left\{ 1, \cdots, N\right\} \right\} > 0~\text{or}~\beta_n^\delta>0~\text{for all}~n \ge 0 \right\}.
\]
It suffices to show $\mathbb{P}\left( \Psi  \right) = 0$. By taking expectation of (\ref{bregdes1}), and rearranging the terms, we have
\[
\fl \mathbb{E}\left[ c_0 t_n^\delta \left\|\mr_n^\delta\right\|^r
+ \frac{1}{2} \upsilon_1 \d_{i_n} \beta_n^\d \|\m_n^\d\|^2 \right]
\le \mathbb{E}\left[D_{\xi _n^\delta }\Theta \left(\hat x, x_n^\delta\right)\right] - \mathbb{E}\left[D_{\xi_{n + 1}^\delta}\Theta \left(\hat x, x_{n + 1}^\delta\right) \right]
\]
and thus, for any integer $k\ge 0$, there follows
\begin{equation}\label{Theta1}
\fl \sum_{n = 0}^k \mathbb{E}\left[c_0 t_n^\delta \left\|\mr_n^\delta\right\|^r
+ \frac{1}{2} \upsilon_1 \d_{i_n} \beta_n^\d \|\m_n^\d\|^2 \right]
\le \mathbb{E}\left[D_{\xi _0}\Theta(\hat x, x_0)\right] = D_{\xi_0}\Theta(\hat x, x_0) < \infty.
\end{equation}
In view of the definition of $t_n^\delta$ and Assumption \ref{5asumption operator}(c), there holds
\[
\fl t_n^\delta \ge \tilde \mu \left\|\mr_n^\delta\right\|^{2 - r} \chi_{\left\{\left\|\mr_n^\delta\right\| > \tau \delta_{i_n} \right\}}~~~\text{with}~~~\tilde \mu  = \min \left\{ {\frac{{{\mu _0}}}{{{B_0^2}}}\left( {1 - \eta  - \frac{{1 + \eta }}{\tau }} \right),{\mu _1}} \right\},
\]
where $\chi _{{\rm A}}$ denotes the characteristic function of an event ${\rm A}$, i.e., ${\chi _{\rm A}}\left( w \right) = 1$ if $w \in {\rm A}$ and $0$ otherwise.
By the tower property of the expectation and the fact that the index $i_n$ is selected from $\left\{ {1, \cdots, N} \right\}$  via a uniform distribution, we further obtain
\begin{eqnarray*}
\mathbb{E}\left[t_n^\delta \left\|\mr_n^\delta\right\|^r\right]
& \ge \tilde \mu \mathbb{E}\left[\left\|\mr_n^\delta\right\|^2 \chi_{\left\{\left\|\mr_n^\delta\right\| > \tau \delta_{i_n} \right\}} \right] \\
& = \tilde \mu \mathbb{E}\left[\mathbb{E}\left[\left\|\mr_n^\delta\right\|^2 \chi_{\left\{\left\|\mr_n^\delta \right\| > \tau \delta_{i_n} \right\}} \big|\mathcal{F}_n \right] \right]\\
& = \frac{\tilde \mu}{N} \mathbb{E}\left[\sum_{i = 1}^N \left\|F_i\left(x_n^\delta\right) - y_i^\delta\right\|^2 \chi_{\left\{\left\|F_i\left(x_n^\delta\right) - y_i^\delta\right\| > \tau \delta_i \right\}} \right].
\end{eqnarray*}
On the other hand, from the definition of $\beta_n^\delta$, it follows that
\begin{eqnarray*}
\fl \quad \beta _n^\delta = \min\left\{\frac{t_n^\d \l \g_n^\d, \m_n^\d\r - 2\sigma \tilde \gamma_n^\d}{\|\m_n^\d\|^2}, \beta\right\} \chi_{\left\{\|\m_n^\delta\| > \upsilon_0 \delta_{i_n} ~\text{and}~ \tilde \gamma_n^\delta -\frac{t_n^\d}{2\sigma} \l \g_n^\d, \m_n^\d\r < - \upsilon_1 \d_{i_n} \|\m_n^\delta\|^2 \right\}} \\
\fl \qquad \ge \min\left\{2\sigma \upsilon_1 \d_{i_n}, \beta\right\} \chi_{\left\{\beta_n^\d >0\right\}}
\ge \tilde \beta \chi_{\left\{\beta_n^\d >0\right\}}
\end{eqnarray*}
with $\tilde \beta :=\min \left\{2\sigma \upsilon_1\d_{\min}, \beta\right\} > 0$, where ${\delta _{\min }} := \min \left\{ {{\delta _i}:i = 1, \cdots, N} \right\}>0$. Thus, we can deduce that
\[
\beta_n^\d \|\m_n^\d\|^2 \ge \tilde \beta \upsilon_0^2 \delta_{i_n}^2 \chi_{\left\{\beta_n^\d >0\right\}}.
\]
Based on the above estimates, it then follows from (\ref{Theta1}) that
\begin{eqnarray*}
\fl D_{\xi _0}\Theta(\hat x, x_0) \\
\fl \ge \sum_{n = 0}^k \mathbb{E}\left[\frac{c_0 \tilde \mu}{N} \sum_{i = 1}^N \left\| {{F_i}\left( {x_n^\delta } \right) - y_i^\delta } \right\|^2 \chi _{\left\{ {\left\| {{F_i}\left( {x_n^\delta } \right) - y_i^\delta } \right\| > \tau {\delta_i}} \right\}}
+ \frac{1}{2} \tilde \beta \upsilon_0^2 \upsilon_1 \d_{i_n}^3 \chi _{\left\{\beta_n^\d >0\right\}} \right] \\
\fl \ge \sum_{n = 0}^k \mathbb{E}\left[\left(\frac{c_0 \tilde \mu}{N} \sum_{i = 1}^N \left\| {{F_i}\left( {x_n^\delta } \right) - y_i^\delta } \right\|^2 \chi _{\left\{ {\left\| {{F_i}\left( {x_n^\delta } \right) - y_i^\delta } \right\| > \tau {\delta _i}} \right\}}
+ \frac{1}{2} \tilde \beta \upsilon_0^2 \upsilon_1 \d_{i_n}^3 \chi _{\left\{\beta_n^\d >0\right\}}\right)\chi_\Psi \right].
\end{eqnarray*}
From the definition of $\Psi$, for any integer $n\ge 0$, either there exists at least one $i \in \left\{ {1, \cdots, N} \right\}$ such that $\left\| {F_i\left(x_{n}^\delta \right)-y_i^\delta} \right\| > \tau \delta_i$ or $\beta_n^\delta>0$ occurs. Consequently
\begin{eqnarray*}
\fl \left(\frac{c_0 \tilde \mu}{N} \sum_{i = 1}^N \left\| {{F_i}\left( {x_n^\delta } \right) - y_i^\delta } \right\|^2 \chi_{\left\{ {\left\| {{F_i}\left( {x_n^\delta } \right) - y_i^\delta } \right\| > \tau {\delta _i}} \right\}} + \frac{1}{2} \tilde \beta \upsilon_0^2 \upsilon_1 \d_{i_n}^3 \chi _{\left\{\beta_n^\d >0\right\}}\right)\chi_\Psi \\
\ge \min\left\{\frac{c_0\tilde \mu}{N} \tau^2 \d_{\min}^2, \frac{1}{2} \tilde \beta \upsilon_0^2 \upsilon_1 \d_{\min}^3\right\} \chi_\Psi = \tilde c \chi_\Psi,
\end{eqnarray*}
where $\tilde c := \min\left\{\frac{c_0\tilde \mu}{N} \tau^2 \d_{\min}^2, \frac{1}{2} \tilde \beta \upsilon_0^2 \upsilon_1 \d_{\min}^3\right\} > 0$. Therefore, we infer that
\begin{eqnarray*}
D_{\xi_0}\Theta (\hat x, x_0) \ge
\sum_{n=0}^k \tilde c \, {\mathbb E}[\chi_\Psi] = \tilde c (k+1) {\mathbb P}(\Psi),
\end{eqnarray*}
from which it follows
\begin{eqnarray*}
{\mathbb P}(\Psi) \le \frac{1}{\tilde c (k+1)} D_{\xi_0}\Theta(\hat x, x_0).
 \end{eqnarray*}
Taking $k \to \infty$ gives $\mathbb{P}\left( \Psi  \right) = 0$, which implies that {\it almost surely} there exists a finite $n$ such that $\left\| {F_i\left(x_{n}^\delta \right)-y_i^\delta} \right\| \le \tau \delta_i$ for $i=1,\cdots,N$ and $\beta_{n}^\delta=0$.
\end{proof}

\subsection{Convergence analysis}

Let $n_\d$ denote the stopping index produced by Algorithm \ref{shbnoisy}. According to Proposition \ref{smonotoniciy}, $n_\d$ is finite almost surely. We emphasis that the stopping index $n_\delta$ is a random variable, which introduces a significant challenge in establishing the convergence of $x_{n_\d}^\d$ to a solution of (\ref{nonlinear equation}) as the noise level $\d$ tends to zero. To demonstrate the almost sure convergence of $x_{n_\delta}^\delta$ as $\delta \to 0$, we examine in this subsection the noise-free counterpart of Algorithm~\ref{shbnoisy} and establish its almost sure convergence. This will be achieved by constructing, in Lemma \ref{descentexact}, an event of probability one and then, in Theorem \ref{sconvergence}, proving that the iterates generated with exact data converge along every sample path within this event.

The counterpart of Algorithm \ref{shbnoisy} with exact data is formulated as follows.

\begin{algorithm}[Adaptive SHB method with exact data]\label{shbexact}
Let $\mu_0>0$, $\mu_1>0$, $0<\beta\le \infty $. Pick $x_0 \in {\rm dom}\left( { \partial \Theta } \right)$ and $\xi _0  \in \partial \Theta\left( {x_0} \right)$. Set $\xi_{-1}=\xi_0$ and $\tilde \gamma _0:=0$.
For $n\ge 0$ do:
\begin{itemize}
\item[\rm{(i)}] Sample  an index  ${i_n} \in \left\{ {1,...,N} \right\}$ randomly via a uniform distribution;

\item[\rm{(ii)}]  Compute $\mr_n:= F_{i_n}(x_n) - y_{i_n}$, $\g_n:=L_{i_n}(x_n)^* J_r^{{\cal Y}_{i_n}}(\mr_n)$, and determine $t_n$ by
\begin{equation*}
t_n = \left\{ \begin{array}{lll}
\min \left\{\frac{\mu_0 (1 - \eta) \|\mr_n\|^r}{\|\g_n\|^2}, \mu_1 \|\mr_n\|^{2 - r} \right\}, & \emph{if}~ \mr_n \ne 0,\\[1mm]
0, & \emph{if}~ \mr_n = 0;
\end{array} \right.
\end{equation*}

\item[\rm{(iii)}] Set $\m_n:= \xi_n - \xi_{n-1}$, compute $\tilde \gamma _n$ by
\begin{eqnarray*}
\tilde \gamma_n = \left\langle \m_n, x_n - x_{n - 1} \right\rangle
- (1 - \eta) t_{n-1} \|\mr_{n - 1}\|^r + \beta _{n - 1} \tilde \gamma_{n - 1}
\end{eqnarray*}
and determine $\beta_n$ by
\begin{eqnarray*}
\fl \qquad \beta_n = \left\{ \begin{array}{lll}
\min \left\{ \frac{t_n \l \g_n, \m_n\r - 2\sigma \tilde \gamma_n}{\|\m_n\|^2}, \beta\right\}, &\emph{if } \m_n \ne 0 \, \& \, \tilde \gamma_n - \frac{t_n}{2\sigma} \l \g_n, \m_n\r <0, \\[1mm]
0, &\emph{otherwise};
\end{array} \right.
\end{eqnarray*}

\item[\rm{(iv)}] Update $\xi _{n + 1}$ and $x _{n + 1}$ by
\begin{eqnarray*}
\xi _{n + 1}  = \xi _n  - t_n \g_n + \beta _n \m_n, \\[1mm]
x_{n + 1}  = \arg \min_{x \in {\cal X}} \left\{\Theta(x) - \left\langle \xi_{n + 1}, x \right\rangle \right\}.
\end{eqnarray*}
\end{itemize}
\end{algorithm}

The following lemma provides an event of probability one on the sample paths, which will serve as the basis for analyzing the almost sure convergence of Algorithm \ref{shbexact}.

\begin{lemma}\label{descentexact}
Let Assumptions \ref{5asumption theta} and \ref{5asumption operator} hold. Consider Algorithm \ref{shbexact} with $0<\mu_0<4\sigma$. Then $x_n \in {B_{2\rho} }\left( {{x_0}} \right)$ for all $n$ and, for any solution $\hat x$ of (\ref{nonlinear equation}) in $B_{2\rho}(x_0)\cap {\rm dom}(\Theta)$, there holds
\begin{equation}\label{monotonicityexact}
D_{\xi_{n + 1}}\Theta(\hat x, x_{n + 1}) \le D_{\xi_n}\Theta(\hat x, x_n)
- c_1 t_n \|\mr_n\|^r
\end{equation}
for all $n\ge 0$, where $c_1: = \left(1 - \frac{\mu_0}{4\sigma} \right)(1 - \eta) > 0$. Moreover, the event
\begin{equation}\label{eventomega}
\mathcal{E}: = \left\{\sum_{n = 0}^\infty\sum_{i = 1}^N \left\|F_i(x_n) - y_i\right\|^2 < \infty\right\}
\end{equation}
occurs almost surely, i.e., ${\mathbb P}\left( \mathcal{E}  \right) = 1$.
\end{lemma}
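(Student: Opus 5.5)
The argument mirrors the proof of Proposition \ref{smonotoniciy} with $\delta=0$, and I would follow its steps in order.

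\textbf{Step 1: the one-step inequality.} Fix a solution $\hat x \in B_{2\rho}(x_0)\cap{\rm dom}(\Theta)$ and suppose $x_k\in B_{2\rho}(x_0)$ for all $k\le n$. I repeat the derivation of (\ref{decay}) with $\delta_{i_n}=0$. The estimate (\ref{undelta}) becomes
\[
\l \g_n, \hat x - x_n\r \le -(1-\eta)\|\mr_n\|^r .
\]
Define $\gamma_n := \l \m_n, x_n - \hat x\r$. Induction, using $\beta_k\ge 0$ and the recursion for $\tilde\gamma_n$ in Algorithm \ref{shbexact}, gives $\gamma_n\le\tilde\gamma_n$. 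Since $t_n\le \mu_0(1-\eta)\|\mr_n\|^r/\|\g_n\|^2$, we have
\[
\frac{1}{4\sigma}t_n^2\|\g_n\|^2 - (1-\eta)t_n\|\mr_n\|^r \le -c_1 t_n\|\mr_n\|^r .
\]
The remaining part is $h_n(\beta_n)$, with
\[
h_n(s)=\frac{1}{4\sigma}s^2\|\m_n\|^2+s\Bigl(\tilde\gamma_n-\frac{t_n}{2\sigma}\l\g_n,\m_n\r\Bigr).
\]
If $\beta_n=0$, then $h_n(\beta_n)=0$. Otherwise
\[
0<\beta_n\le \frac{t_n\l\g_n,\m_n\r-2\sigma\tilde\gamma_n}{\|\m_n\|^2},
\]
and, as in the proof of (\ref{hk}), $h_n(\beta_n)\le \frac12\beta_n\bigl(\tilde\gamma_n-\frac{t_n}{2\sigma}\l\g_n,\m_n\r\bigr)<0$. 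Together these give (\ref{monotonicityexact}).

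\textbf{Step 2: the iterates stay in the ball.} This is the same induction as in Proposition \ref{smonotoniciy}. Take $\hat x=x^\dag$. Then $D_{\xi_{n+1}}\Theta(x^\dag,x_{n+1})\le D_{\xi_0}\Theta(x^\dag,x_0)\le\sigma\rho^2$ by (\ref{6xdag}). Strong convexity gives $\|x_{n+1}-x^\dag\|\le\rho$, and since $\|x^\dag-x_0\|\le\rho$, we get $x_{n+1}\in B_{2\rho}(x_0)$.

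\textbf{Step 3: the event $\mathcal E$ has probability one.} Telescoping (\ref{monotonicityexact}) pathwise gives
\[
\sum_{n\ge0}c_1t_n\|\mr_n\|^r\le D_{\xi_0}\Theta(\hat x,x_0) \quad\mbox{deterministically.}
\]
Using $\|\g_n\|\le B_0\|\mr_n\|^{r-1}$, we have $t_n\ge \tilde\mu\|\mr_n\|^{2-r}$ with $\tilde\mu=\min\{\mu_0(1-\eta)/B_0^2,\mu_1\}$; this also holds trivially when $\mr_n=0$. Hence
\[
\sum_n \|\mr_n\|^2\le C<\infty .
\]
Take expectations and condition on $\mathcal F_n$. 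Since $x_n$ is $\mathcal F_n$-measurable and $i_n$ is uniform,
\[
\EE\bigl[\|\mr_n\|^2\bigr]=\frac1N\EE\Bigl[\sum_{i=1}^N\|F_i(x_n)-y_i\|^2\Bigr].
\]
By monotone convergence, $\EE\bigl[\sum_n\sum_i\|F_i(x_n)-y_i\|^2\bigr]\le NC<\infty$. The double series is therefore finite almost surely, i.e.\ $\mathbb P(\mathcal E)=1$.

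\textbf{Main obstacle.} The only delicate points are the induction $\gamma_n\le\tilde\gamma_n$, which needs $x_k\in B_{2\rho}(x_0)$ and $\beta_k\ge0$ for all earlier $k$ and so must be interleaved with the ball induction of Step 2, and the measurability of $x_n$ with respect to $\mathcal F_n$ used in the conditional expectation of Step 3. Both are handled exactly as in Proposition \ref{smonotoniciy}.
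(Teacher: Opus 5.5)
Your proposal is correct and follows the paper's proof essentially step for step. The only cosmetic difference is that you telescope the one-step inequality pathwise before taking expectations, whereas the paper takes expectations of each step and then sums; this is the same computation. One small wording fix: when $\mr_n=0$ and $r\ge 2$, the bound $t_n\ge\tilde\mu\|\mr_n\|^{2-r}$ is not literally meaningful, but the inequality you actually use, $t_n\|\mr_n\|^r\ge\tilde\mu\|\mr_n\|^2$, holds trivially there, which is how the paper phrases it.
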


\begin{proof}
An argument similar to the one used in Lemma \ref{smonotoniciy}  shows that  $x_n \in B_{2\rho}(x_0)$ and (\ref{monotonicityexact}) hold for $n\ge 0$.
The proof is completed by showing that ${\mathbb P}\left(\mathcal{E}\right) = 1$. By taking the expectation of (\ref{monotonicityexact}), we can obtain
\[
\mathbb{E}\left[D_{\xi_n} \Theta \left(\hat x, x_n\right) \right] - \mathbb{E}\left[D_{\xi_{n + 1}} \Theta \left(\hat x, x_{n + 1}\right) \right] \ge c_1 \mathbb{E}\left[t_n \left\|\mr_n\right\|^r \right].
\]
If $\mr_n \ne 0$, then $t_n \ge \mu_2 \|\mr_n\|^{2 - r}$ with $\mu_2 := \min \left\{\mu_0 (1 - \eta)/B_0^2, \mu_1 \right\}$. Consequently, $t_n \left\|\mr_n\right\|^r \ge \mu_2 \left\|\mr_n\right\|^2$. This together with the tower property of the expectation yields
\begin{eqnarray*}
\mathbb{E}\left[D_{\xi_n}\Theta(\hat x, x_n)\right] - \mathbb{E}\left[D_{\xi_{n + 1}}\Theta(\hat x, x_{n + 1}) \right] \\
\ge c_1 \mu_2 \mathbb{E}\left[\|\mr_n\|^2\right] = c_1 \mu_2 \mathbb{E} \left[\mathbb{E}\left[\left\|F_{i_n}(x_n) - y_{i_n}\right\|^2\big| \mathcal{F}_n \right] \right] \\
=\frac{c_1\mu_2}{N}\mathbb{E}\left[\sum_{i = 1}^N \left\|F_i(x_n) - y_i\right\|^2\right].
\end{eqnarray*}
Summing from $n=0$ to $n=\infty$, there follows
\[
\frac{c_1\mu_2}{N}\mathbb{E}\left[\sum_{n = 0}^\infty \sum_{i = 1}^N \left\|F_i(x_n) - y_i\right\|^2\right]
\le D_{\xi_0}\Theta \left(\hat x, x_0\right) < \infty
\]
which implies that
\[
\sum_{n = 0}^\infty \sum_{i = 1}^N \left\|F_i\left(x_n\right) - y_i\right\|^2  < \infty
\]
almost surely. This completes the proof.
\end{proof}

We are now turning to show the almost sure convergence of Algorithm \ref{shbexact}. To this end, we need the following general convergence result.

\begin{proposition}\label{prop_con1}
 Let Assumptions \ref{5asumption theta} and \ref{5asumption operator} hold. Let  $\left\{ {{x_n}} \right\} \subset {B_{2\rho }}\left( {{x_0}} \right)$ and $\left\{ {{\xi_n}} \right\} \subset \mathcal{X}$ be such that
\begin{itemize}
\item[{\rm (i)}] ${\xi _n} \in \partial \Theta \left( {{x_n}} \right)$ for all $n$;

\item[{\rm (ii)}] for any solution $ \hat x$ of (\ref{nonlinear equation}) in  $ {B_{2\rho} }\left( {{x_0}} \right) \cap {\rm dom}\left( \Theta  \right)$ the sequence $\left\{ {{D_{{\xi _n}}}\Theta \left( {\hat x,{x_n}} \right)} \right\}$ is monotonically decreasing;

\item[{\rm (iii)}] ${\lim _{n \to \infty }}\left\| {{F_i}\left( {{x_n}} \right) - {y_i}} \right\| = 0$ for all $i = 1, \cdots ,N$;

\item[{\rm (iv)}] there is a subsequence $\left\{ n_l \right\}$ of integers with $n_l \to \infty$ such that for any solution $\hat x$ of  (\ref{nonlinear equation}) in $ {B_{2\rho} }\left( {{x_0}} \right) \cap {\rm dom}\left( \Theta  \right)$ there holds
\[
\lim_{k \to \infty} \sup_{l\ge k} \left|\left\langle \xi_{n_l} - \xi_{n_k}, x_{n_l} - \hat x \right\rangle\right| = 0.
\]
\end{itemize}
Then there exists a solution $x_*$ of (\ref{nonlinear equation}) in $ {B_{2\rho} }\left( {{x_0}} \right) \cap {\rm dom}\left( \Theta  \right)$ such that
\[
\lim_{n \to \infty} \left\|x_n - x_*\right\| = 0 \quad \text{and} \quad \lim_{n \to \infty} D_{\xi _n}\Theta \left( x_*, x_n\right) = 0.
\]
If, in addition, ${\xi _{n + 1}} - {\xi _n} \in \overline {{\rm Ran}\left( {{L_1}{{\left( {{x^\dag }} \right)}^*}} \right)} \oplus  \cdots  \oplus \overline {{\rm Ran}\left( {{L_N}{{\left( {{x^\dag }} \right)}^*}} \right)}$ for all $n$, then $x_*=x^\dag$.
\end{proposition}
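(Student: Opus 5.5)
The plan is the standard Cauchy-sequence argument for Landweber-type methods in Bregman form, here carried out along the subsequence $\{n_l\}$ supplied by (iv).

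\textbf{Step 1: a Cauchy estimate along $\{n_l\}$.} Fix a solution $\hat x$ in $B_{2\rho}(x_0)\cap{\rm dom}(\Theta)$ (for instance $x^\dag$). By (ii), $D_{\xi_n}\Theta(\hat x,x_n)$ is decreasing and nonnegative, so it converges to some $\varepsilon(\hat x)\ge 0$. For $l\ge k$, apply the identity (\ref{2.9}) with $x=\hat x$, $(x_1,\xi_1)=(x_{n_l},\xi_{n_l})$ and $(x_2,\xi_2)=(x_{n_k},\xi_{n_k})$. This gives
\[
D_{\xi_{n_k}}\Theta(x_{n_l},x_{n_k}) = D_{\xi_{n_k}}\Theta(\hat x,x_{n_k}) - D_{\xi_{n_l}}\Theta(\hat x,x_{n_l}) - \l \xi_{n_k}-\xi_{n_l}, x_{n_l}-\hat x\r .
\]
Take $\sup_{l\ge k}$ and let $k\to\infty$. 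The difference of the first two terms tends to $0$ because both converge to $\varepsilon(\hat x)$, and the last term tends to $0$ by (iv). Strong convexity (\ref{theta p-convex}) then yields $\sup_{l\ge k}\|x_{n_l}-x_{n_k}\|\to 0$. Hence $\{x_{n_l}\}$ is Cauchy and converges to some $x_*\in B_{2\rho}(x_0)$, since the ball is closed.

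\textbf{Step 2: $x_*$ is a solution in ${\rm dom}(\Theta)$.} Continuity of $F_i$ on $B_{2\rho}(x_0)$ follows from (\ref{TCC}): it gives $\|F_i(x)-F_i(\bar x)\|\le \frac{1}{1-\eta}\|L_i(x)\|\,\|x-\bar x\|$. Combining this with (iii) gives $F_i(x_*)=y_i$ for all $i$. To see $x_*\in{\rm dom}(\Theta)$, use lower semicontinuity. Write
\[
\Theta(x_{n_l}) = \Theta(\hat x) - D_{\xi_{n_l}}\Theta(\hat x,x_{n_l}) - \l\xi_{n_l},\hat x - x_{n_l}\r .
\]
Subtracting the same identity at $n_k$ shows that $\Theta(x_{n_l})-\Theta(x_{n_k})$ is controlled by differences of the Bregman distances, by $\l\xi_{n_l}-\xi_{n_k}, x_{n_l}-\hat x\r$ (small by (iv)), and by $\l\xi_{n_k},x_{n_l}-x_{n_k}\r\to 0$ for fixed $k$. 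So $\Theta(x_{n_l})$ is bounded, and indeed convergent. Lower semicontinuity then gives $\Theta(x_*)<\infty$.

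\textbf{Step 3: Bregman convergence, the main obstacle.} We want $D_{\xi_{n_k}}\Theta(x_*,x_{n_k})\to 0$. Start from
\[
D_{\xi_{n_k}}\Theta(x_*,x_{n_k}) \le \liminf_{l\to\infty}\Bigl[\Theta(x_{n_l})-\Theta(x_{n_k})-\l\xi_{n_k},x_{n_l}-x_{n_k}\r\Bigr] = \liminf_{l\to\infty} D_{\xi_{n_k}}\Theta(x_{n_l},x_{n_k}).
\]
By the identity in Step 1 (with $\hat x=x_*$, which is now a legitimate solution, so (iv) applies to it), the right-hand side is at most the $\sup_{l\ge k}$ quantity from Step 1, which tends to $0$. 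The delicate point is replacing the liminf by a bound that uses only (iv); if lower semicontinuity alone is not enough here, I would work with $\Theta(x_{n_l})\to\Theta(x_*)$, which can be obtained from Step 2.

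\textbf{Step 4: convergence of the full sequence.} Since $x_*$ is a solution in the ball, (ii) makes $D_{\xi_n}\Theta(x_*,x_n)$ monotone. Its subsequence along $\{n_k\}$ tends to $0$, so the whole sequence tends to $0$, and (\ref{theta p-convex}) gives $\|x_n-x_*\|\to 0$.

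\textbf{Step 5: identifying $x_*=x^\dag$ under the range condition.} The range condition gives $\xi_n-\xi_0\in \mathcal N:=\overline{{\rm Ran}(L_1(x^\dag)^*)}\oplus\cdots\oplus\overline{{\rm Ran}(L_N(x^\dag)^*)}$. Apply (\ref{2.9}) with $\hat x=x^\dag$:
\[
D_{\xi_n}\Theta(x^\dag,x_n)-D_{\xi_0}\Theta(x^\dag,x_0)=D_{\xi_n}\Theta(x_*,x_n)-D_{\xi_0}\Theta(x_*,x_0)+\l\xi_n-\xi_0,x_*-x^\dag\r .
\]
From (\ref{TCC}), $L_i(x^\dag)(x_*-x^\dag)=0$, so $x_*-x^\dag\in\mathcal N^\perp$ and the pairing term vanishes. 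Letting $n\to\infty$ and using $D_{\xi_n}\Theta(x_*,x_n)\to0$ gives
\[
D_{\xi_0}\Theta(x_*,x_0)=D_{\xi_0}\Theta(x^\dag,x_0)-\lim_{n\to\infty} D_{\xi_n}\Theta(x^\dag,x_n)\le D_{\xi_0}\Theta(x^\dag,x_0).
\]
By the minimality (\ref{minimum solution}) and the uniqueness of $x^\dag$, this forces $x_*=x^\dag$.
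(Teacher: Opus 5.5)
Your proof is correct and is essentially the standard argument of Jin and Wang (Proposition 3.6 of their 2013 paper), which the paper cites instead of giving a proof. The steps match: the Cauchy property of $\{x_{n_l}\}$ from the identity (\ref{2.9}) with (ii) and (iv), lower semicontinuity to get $D_{\xi_{n_k}}\Theta(x_*,x_{n_k})\to 0$, monotonicity to pass to the full sequence, and orthogonality of $x_*-x^\dag$ to the ranges together with minimality of $x^\dag$.

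Two small points. First, the hedge in Step 3 is unnecessary, because $\liminf_{l} D_{\xi_{n_k}}\Theta(x_{n_l},x_{n_k})\le \sup_{l\ge k} D_{\xi_{n_k}}\Theta(x_{n_l},x_{n_k})$ already closes that step. Second, in Step 2 the term $\langle \xi_{n_k}, x_{n_l}-x_{n_k}\rangle$ tends to $\langle \xi_{n_k}, x_*-x_{n_k}\rangle$, not $0$, as $l\to\infty$ with $k$ fixed. It is only bounded by $4\rho\|\xi_{n_k}\|$, which suffices for $\Theta(x_*)<\infty$, but it does not justify your ``indeed convergent'' remark, which you do not need anyway.
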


\begin{proof}
Refer to \cite[Proposition 3.6]{JinWang2013}.
\end{proof}

\begin{theorem}\label{sconvergence}
Let Assumptions \ref{5asumption theta} and \ref{5asumption operator} hold. Consider Algorithm \ref{shbexact} with $0<\mu_0<4\sigma$ and $0\le\beta<1$.
Then, there exists a random solution $x_* \in {B_{2\rho} }\left( {{x_0}} \right)\cap  {\rm dom}\left( \Theta  \right)$ of (\ref{nonlinear equation}) such that
\[
\lim_{n \to \infty} \left\|x_* - x_n\right\| = 0 \quad {\rm and} \quad \lim_{n \to \infty} D_{\xi _n}\Theta \left( x_*, x_n\right)=0
\]
almost surely. If $ {{\rm Ran}\left( {{L_i}\left( x \right)^*} \right)} \subset \overline {{\rm Ran}\left( {{L_i}\left( {{x^\dag }} \right)^*} \right)}$ for all $x \in {B_{2\rho }}\left( {{x_0}} \right)$ and $i=1,\cdots,N$, then $x_*=x^\dag$ almost surely.
\end{theorem}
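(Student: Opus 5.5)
The plan is to fix an arbitrary sample path in the event $\mathcal{E}$ of Lemma \ref{descentexact}, which has probability one, and to check that the deterministic sequences $\{x_n\}$ and $\{\xi_n\}$ along this path satisfy hypotheses (i)--(iv) of Proposition \ref{prop_con1}.

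The first three hypotheses are routine:
\begin{itemize}
\item (i) holds because $x_{n+1}$ is defined as a minimizer of $\Theta(x)-\langle\xi_{n+1},x\rangle$, so Lemma \ref{minixi} gives $\xi_n\in\partial\Theta(x_n)$.
\item (ii) and the inclusion $x_n\in B_{2\rho}(x_0)$ follow from (\ref{monotonicityexact}).
\item (iii) holds on $\mathcal{E}$: writing $R_n:=\sum_{i=1}^N\|F_i(x_n)-y_i\|^2$, we have $\sum_n R_n<\infty$, hence $R_n\to0$.
\end{itemize}

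The main work is (iv). I will choose the subsequence by a minimality device. If $R_n=0$ for some $n$, then $t_n=0$, but the momentum may still move the iterate, so I avoid special-casing this. In general I pick $n_l$ inductively so that
\[
R_{n_l}\le R_n \quad \text{for all } 0\le n\le n_l .
\]
This is possible since $R_n\to 0$: take $n_l$ to be an index attaining $\min_{n\le m}R_n$ for a sequence $m\to\infty$, or the running-minimum record times.

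Next I unroll the momentum. From $\m_{n+1}=-t_n\g_n+\beta_n\m_n$ with $\m_0=0$ and $0\le\beta_n\le\beta<1$, I get
\[
\m_{n+1}=-\sum_{j=0}^{n}\Big(\prod_{s=j+1}^{n}\beta_s\Big)t_j\g_j .
\]
Hence $\xi_{n_l}-\xi_{n_k}=\sum_{n=n_k}^{n_l-1}\m_{n+1}$ is a combination of the $t_j\g_j$, $j<n_l$, with weights bounded by $\beta^{n-j}$. For each $j<n_l$ I estimate, using $t_j\le\mu_1\|\mr_j\|^{2-r}$ and (\ref{TCC}) applied to $x_j$ and $\hat x$, $x_j$ and $x_{n_l}$,
\[
|\langle t_j\g_j,x_{n_l}-\hat x\rangle|\le \mu_1\|\mr_j\|\,\|L_{i_j}(x_j)(x_{n_l}-\hat x)\|\le \mu_1(1+\eta)\|\mr_j\|\big(2\|\mr_j\|+\|F_{i_j}(x_{n_l})-y_{i_j}\|\big).
\]
By the minimality of $n_l$ the last factor is at most $\sqrt{R_{n_l}}\le\sqrt{R_j}$, which gives the bound $C R_j$.

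Summing over $n$ and $j$ then gives
\[
\big|\langle\xi_{n_l}-\xi_{n_k},x_{n_l}-\hat x\rangle\big|\le \frac{C}{1-\beta}\Big(\sum_{j\ge n_k}R_j+\sum_{j<n_k}\beta^{n_k-j}R_j\Big).
\]
The bound is uniform in $l\ge k$. The first sum is a tail of a convergent series. The second sum is the convolution of a summable sequence with a geometric kernel, so it also tends to $0$ as $k\to\infty$ (split it at $j=n_k/2$). This is where $\beta<1$ is essential, and I expect this bookkeeping to be the main obstacle. Proposition \ref{prop_con1} then yields a solution $x_*$, depending on the path, with $\|x_n-x_*\|\to0$ and $D_{\xi_n}\Theta(x_*,x_n)\to0$ on $\mathcal{E}$. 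Measurability of $x_*$ follows because it is a pointwise limit of the random variables $x_n$.

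For the last claim, the unrolled formula shows that $\xi_{n+1}-\xi_n=\m_{n+1}$ is a finite combination of $\g_j=L_{i_j}(x_j)^*J_r^{{\cal Y}_{i_j}}(\mr_j)\in{\rm Ran}(L_{i_j}(x_j)^*)$. Under the range inclusion this lies in $\overline{{\rm Ran}(L_{i_j}(x^\dag)^*)}$, so $\m_{n+1}$ lies in the sum space required by Proposition \ref{prop_con1}. Therefore $x_*=x^\dag$ on $\mathcal{E}$, i.e. almost surely.
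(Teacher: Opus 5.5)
Your route is the paper's route. You use the same event $\mathcal{E}$, the same checks of (i)--(iii) in Proposition \ref{prop_con1}, the same unrolled momentum formula, the same minimal-residual subsequence, the same bound $3\mu_1(1+\eta)\sum_{n=n_k}^{n_l-1}\sum_{q=0}^{n}\beta^{n-q}R_q$, and the same range argument for $x_*=x^\dag$. Your explicit splitting of the double sum is an equivalent substitute for the paper's remark that $S_z=\sum_{n=0}^{z}\sum_{q=0}^{n}\beta^{n-q}R_q$ is nondecreasing and bounded by $(1-\beta)^{-1}\sum_q R_q$, hence Cauchy. There is, however, a gap in the construction of $\{n_l\}$, exactly in the case you chose not to treat.

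You need indices $n_l\to\infty$ with $R_{n_l}\le R_n$ for all $n\le n_l$. The fact $R_n\to 0$ alone guarantees this only when $R_n>0$ for every $n$. If $R_{n_*}=0$ for some $n_*$, then $\min_{n\le m}R_n=0$ for all $m\ge n_*$, so every admissible $n_l\ge n_*$ must satisfy $R_{n_l}=0$; you therefore need $R_n=0$ for infinitely many $n$. Your own claim that the momentum may still move the iterate would allow $R_m>0$ for all $m>n_*$. In that case the running-minimum indices stop at a finite index, and no admissible subsequence exists. So this case cannot be bypassed: you must prove the opposite of what you assert.

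The missing step is short. If $R_{n_*}=0$, then $x_{n_*}$ is a solution of (\ref{nonlinear equation}) in $B_{2\rho}(x_0)\cap{\rm dom}(\Theta)$. Applying (\ref{monotonicityexact}) with $\hat x=x_{n_*}$, together with strong convexity, gives $\sigma\|x_m-x_{n_*}\|^2\le D_{\xi_m}\Theta(x_{n_*},x_m)\le D_{\xi_{n_*}}\Theta(x_{n_*},x_{n_*})=0$, hence $R_m=0$, for all $m\ge n_*$. In fact the momentum does not act at all: $t_{n_*}=0$, and $\tilde\gamma_{n_*}\ge\langle\m_{n_*},x_{n_*}-\hat x\rangle=0$ for $\hat x=x_{n_*}$, which forces $\beta_{n_*}=0$. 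With this fact, the paper's rule is always well defined: set $n_0=0$ and let $n_l$ be the first integer $\ge n_{l-1}+1$ with $R_{n_l}\le R_{n_{l-1}}$. It yields $R_{n_l}\le R_n$ for $0\le n\le n_l$, and the rest of your argument then goes through unchanged.
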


\begin{proof}
Let $\mathcal{E}$ be the event defined in (\ref{eventomega}). It follows from Lemma \ref{descentexact} that ${\mathbb P}\left(\mathcal{E}\right) = 1$. To prove the theorem, it suffices to show that, along any sample path in $\mathcal{E}$, there is a solution $x_*$ of (\ref{nonlinear equation})  in $ {B_{2\rho} }\left( {{x_0}} \right) \cap {\rm dom}\left( \Theta  \right)$ such that ${D_{{\xi _n}}}\Theta \left( {{x_*},{x_n}} \right) \to 0$ as $n \to \infty$. Let $(i_0, i_1, \cdots)$ be an arbitrary but fixed sample path in  $\mathcal{E}$, we next consider the convergence of the sequence $\left\{ {{\xi _n},{x_n}} \right\}$ along this sample path. We will use Proposition \ref{prop_con1}. By the definition of $x_n$ and Lemma \ref{minixi}, the condition (i) holds trivially. From Lemma \ref{descentexact}, the condition (ii) follows. Note that for the term
\[
R_n: = \sum_{i = 1}^N \left\| F_i\left(x_n\right) - y_i\right\|^2,
\]
we have
\[
\sum_{n = 0}^\infty R_n < \infty
\]
and thus $R_n \to 0$ as $n \to \infty$. This implies that $\left\|F_i(x_n) - y_i\right\| \to 0$ as $n\to \infty$ for $i=1,\cdots, N$, which shows the condition (iii) in Proposition \ref{prop_con1}.

What is left is to show the term (iv) in Proposition  \ref{prop_con1}. If $R_n=0$ for some $n$, then  $F_i(x_n) =y_i$ for all $i=1, \cdots, N$, which implies that $x_n$ is a solution of (\ref{nonlinear equation}) in $B_{2\rho}(x_0)\cap  {\rm dom}(\Theta)$. Then, utilizing Lemma \ref{descentexact} with $\hat x=x_n$, we can get that
\begin{equation*}
D_{\xi_m}\Theta(x_n, x_m) \le D_{\xi_n}\Theta(x_n, x_n) = 0
\end{equation*}
for all  $m\ge n$, which, together with the strong convexity of $\Theta$, leads to $x_m=x_n$ for all $m \ge n$. Consequently, $R_m=0$ for all $m \ge n$. Based on these facts, we can choose a strictly increasing subsequence $\{ n_l\}$ of integers by setting $n_0=0$ and, for each $l\ge 1$, letting $n_l$ be the first integer such that
\[
n_l \ge n_{l-1} + 1 \quad \text{and} \quad R_{n_l} \le R_{n_{l - 1}}.
\]
For such sequence, it is easy to show that
\begin{equation}\label{nlxiaoyun}
R_{n_l} \le R_n, \quad \forall 0 \le n \le n_l.
\end{equation}
With the above chosen $\left\{ {{n_l}} \right\}$, we next consider, for any $l>k$,
\[
\left\langle \xi _{n_l} - \xi _{n_k}, x_{n_l} - \hat x \right\rangle  = \sum_{n = n_k}^{n_l - 1} \left\langle \xi_{n + 1} - \xi_n, x_{n_l} - \hat x \right\rangle.
\]
From the definition of $\xi_{n+1}$, it follows that
\begin{equation}\label{xiinduction}
\fl \xi_{n + 1} - \xi_n =  -t_n \g_n + \beta_n \left(\xi_n - \xi_{n - 1}\right) = - \sum_{q = 0}^n \left( \prod_{j = q + 1}^n \beta_j\right) t_q \g_q,
\end{equation}
from which there holds
\begin{eqnarray*}
\fl \left\langle \xi_{n_l} - \xi_{n_k}, x_{n_l} - \hat x \right\rangle
= \sum_{n = n_k}^{n_l - 1} \left\langle - \sum_{q = 0}^n \left(\prod_{j = q + 1}^n \beta_j\right) t_q \g_q, x_{n_l} - \hat x \right\rangle \\
= - \sum_{n = n_k}^{n_l - 1} \sum_{q = 0}^n \left(\prod_{j = q + 1}^n \beta_j \right) t_q\left\langle \g_q, x_{n_l} - \hat x \right\rangle \\
=  - \sum_{n = n_k}^{n_l - 1} \sum_{q = 0}^n \left(\prod_{j = q + 1}^n \beta_j\right) t_q\left\langle J_r^{{\cal Y}_{i_q}} (\mr_q), L_{i_q}(x_q) \left(x_{n_l} - \hat x \right) \right\rangle.
\end{eqnarray*}
Since $t_q \le \mu _1 \|\mr_q\|^{2-r}$, with the help of the Cauchy-Schwarz inequality, we can deduce that
\[
\fl \left|\left\langle \xi_{n_l} - \xi_{n_k}, x_{n_l} - \hat x\right\rangle \right|
\le \mu_1\sum_{n = n_k}^{n_l - 1} \sum_{q = 0}^n \left(\prod_{j = q + 1}^n \beta_j \right) \|\mr_q\| \left\|L_{i_q}(x_q)\left(x_{n_l} - \hat x \right) \right\|.
\]
By utilizing  Assumption \ref{5asumption operator}(c), we get that
\begin{eqnarray*}
\fl \left\| L_{i_q}(x_q)\left(x_{n_l} - \hat x\right) \right\|
&\le \left\| L_{i_q}(x_q)\left(x_{n_l} - x_q\right) \right\| + \left\| L_{i_q}(x_q)\left(x_q - \hat x\right) \right\| \\
&\le (1 + \eta)\left(\left\|F_{i_q}(x_q) - F_{i_q}(x_{n_l}) \right\| + \left\|F_{i_q}(x_q) - y_{i_q}\right\|\right) \\
&\le (1 + \eta)\left(2\left\|F_{i_q}(x_q) - y_{i_q}\right\| + \left\|F_{i_q}(x_{n_l}) - y_{i_q}\right\|\right).
\end{eqnarray*}
Therefore
\begin{eqnarray*}
\left|\left\langle \xi_{n_l} - \xi_{n_k}, x_{n_l} - \hat x \right\rangle\right| \le \Gamma_1 + \Gamma_2,
\end{eqnarray*}
where
\begin{eqnarray*}
\Gamma_1 := 2\mu_1(1+\eta)\sum_{n = n_k}^{n_l-1} \sum_{q=0}^n \left(\prod_{j = q + 1}^n \beta_j\right) \|\mr_q\|^2, \\
\Gamma_2 := \mu_1(1 + \eta)\sum_{n = n_k}^{n_l - 1} \sum_{q = 0}^n \left(\prod_{j = q + 1}^n \beta_j\right) \|\mr_q\| \left\|F_{i_q}\left(x_{n_l}\right) - y_{i_q}\right\|.
\end{eqnarray*}
Since $0 \le \beta_j \le \beta$ for $j\ge 0$, we can estimate $\Gamma _1$ by
\begin{eqnarray*}
\Gamma_1 & \le 2\mu_1 (1 + \eta) \sum_{n = n_k}^{n_l - 1} \sum_{q = 0}^n \beta^{n - q} \left\|F_{i_q}(x_q) - y_{i_q} \right\|^2 \\[1mm]
& \le 2 \mu_1 (1 + \eta) \sum_{n = n_k}^{n_l - 1} \sum_{q = 0}^n \beta^{n - q} \sum_{i = 1}^N \left\| F_i(x_q) - y_i \right\|^2 \\
& = 2 \mu_1 (1 + \eta)\sum_{n = n_k}^{n_l - 1} \sum_{q = 0}^n \beta^{n - q} R_q.
\end{eqnarray*}
By using again $0 \le \beta_j \le \beta$ for $j\ge 0$, and through H\"{o}lder inequality, we can bound $\Gamma_2$ by
\begin{eqnarray*}
\fl \quad  \Gamma_2 \le \mu_1 (1 + \eta) \sum_{n = n_k}^{n_l - 1} \sum_{q = 0}^n \beta^{n - q} \left\|F_{i_q}(x_q) - y_{i_q} \right\| \left\|F_{i_q}(x_{n_l}) - y_{i_q}\right\| \\
\fl \qquad \le \mu_1 (1 + \eta) \sum_{n = n_k}^{n_l - 1} \sum_{q = 0}^n \beta^{n - q} \left(\left\|F_{i_q}(x_q) - y_{i_q} \right\|^2 \right)^{\frac{1}{2}} \left(\left\|F_{i_q}(x_{n_l}) - y_{i_q}\right\|^2\right)^{\frac{1}{2}} \\
\fl \qquad \le \mu_1 (1 + \eta) \sum_{n = n_k}^{n_l - 1} \sum_{q = 0}^n \beta^{n - q} \left(\sum_{i = 1}^N \left\|F_i(x_q) - y_i \right\|^2\right)^{\frac{1}{2}} \left(\sum_{i = 1}^N \left\|F_i(x_{n_l}) - y_i\right\|^2\right)^{\frac{1}{2}} \\
\fl \qquad = \mu_1 (1 + \eta) \sum_{n = n_k}^{n_l - 1} \sum_{q = 0}^n \beta^{n - q} R_q^{\frac{1}{2}} R_{n_l}^{\frac{1}{2}}
\le \mu_1 (1 + \eta) \sum_{n = n_k}^{n_l - 1} \sum_{q = 0}^n \beta^{n - q} R_q,
\end{eqnarray*}
where we used (\ref{nlxiaoyun}) in the last inequality. The combination of the above estimation on ${\Gamma _1}$ and ${\Gamma _2}$ yields
\begin{equation}\label{xinlnk}
\left|\left\langle \xi_{n_l} - \xi_{n_k}, x_{n_l} - \hat x \right\rangle \right| \le 3 \mu_1 (1 + \eta) \sum_{n = n_k}^{n_l - 1} \sum_{q = 0}^n \beta^{n - q} R_q.
\end{equation}
To proceed to prove (iv), let ${S_z} = \sum_{n = 0}^z \sum_{q = 0}^n \beta^{n - q} R_q$ for $z\ge 0$. We can show that the sequence $\{S_z\}$ converges by demonstrating that it is  monotonically increasing with a finite upper bound. Indeed,
\[
S_{z + 1} - S_z = \sum_{q = 0}^{z + 1} \beta^{z+1 - q} R_q  \ge 0,
\]
and due to $0\le \beta<1$, we have the upper bound
\[
S_z = \sum_{q = 0}^z \left(\sum_{n = q}^z \beta^{n - q}\right) R_q \le \frac{1}{1 - \beta} \sum_{q = 0}^z R_q \le \frac{1}{1 - \beta} \sum_{q = 0}^\infty R_q < \infty.
\]
This in particular implies that $\{S_z\}$ is a Cauchy sequence and hence
\[
\lim_{k \to \infty} \sup_{l > k} \sum_{n = n_k}^{n_l - 1} \sum_{q = 0}^n \beta^{n - q} R_q = \lim_{k \to \infty} \sup_{l > k} \left|S_{n_l - 1} - S_{n_k - 1}\right| = 0,
\]
which, together with (\ref{xinlnk}), yields  (iv) in Proposition  \ref{prop_con1}. Therefore, we can use Proposition  \ref{prop_con1} to conclude that there is a solution $x_*$ of (\ref{nonlinear equation}) in $B_{2\rho}(x_0) \cap {\rm dom}(\Theta)$ such that $D_{\xi _n}\Theta(x_*, x_n) \to 0$ as $n \to \infty$. Based on this and the strong convexity of $\Theta$, we conclude that $\|x_* - x_n\| \to 0$ as $n \to \infty$.

By virtue of (\ref{xiinduction}), we have
\[
\xi_{n + 1} - \xi_n \in {\rm Ran}\left(L_{i_0}(x_0)^*\right) \oplus \cdots \oplus {\rm Ran}\left(L_{i_n}(x_n)^* \right).
\]
Under the condition ${\rm Ran}\left(L_i(x)^*\right) \subset \overline{{\rm Ran} \left(L_i(x^\dag)^*\right)}$ for all $x \in B_{2\rho}(x_0)$ and $i=1,\cdots, N$, together with the fact that $x_n \in B_{2\rho }(x_0)$ for $n \ge 0$, we have $\xi_{n + 1} - \xi_n \in \overline{{\rm Ran} \left(L_1(x^\dag)^*\right)} \oplus \cdots \oplus \overline {{\rm Ran}\left(L_N(x^\dag)^*\right)}$ for all $n$. Thus, we can use the second part of Proposition \ref{prop_con1} to show $x_*=x^\dag$ along any sample path in $\mathcal{E}$. Consequently, $x_*=x^\dag$ almost surely.
\end{proof}

\subsection{Regularization property}

Next we establish the almost sure convergence and convergence in expectation of Algorithm \ref{shbnoisy}. Before proceeding further, we show the stability property along every sample path in the following lemma which connects Algorithm \ref{shbnoisy} and Algorithm \ref{shbexact}.


\begin{lemma}\label{sstability}
Let Assumptions \ref{5asumption theta} and \ref{5asumption operator} hold. Assume that $0<\mu_0<4\sigma$ and $\tau > (1 + \eta)/(1 - \eta)$. Consider Algorithm \ref{shbnoisy} and Algorithm \ref{shbexact} along the same sample path. For any fixed sample path, let $\hat n = \lim {\inf _{\delta  \to 0}}{n_\delta }$. Then,
\begin{equation*}
\left\|x_n^\delta  - x_n\right\| \to 0 \quad {\rm and} \quad \left\|\xi_n^\delta  - \xi_n\right\| \to 0 \quad \text{as } \delta\to 0
\end{equation*}
for all $0\le n \le \hat n$.
\end{lemma}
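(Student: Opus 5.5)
The proof proceeds along a fixed sample path $(i_0,i_1,\dots)$ by induction on $n$. The induction hypothesis is stronger than the stated conclusion. For $0\le n\le \hat n$ we show simultaneously that
\[
\xi_n^\d\to\xi_n,\qquad x_n^\d\to x_n,\qquad \xi_{n-1}^\d\to\xi_{n-1},\qquad \tilde\gamma_n^\d\to\tilde\gamma_n \qquad\mbox{as } \d\to 0 .
\]
The base case is trivial because $\xi_{-1}^\d=\xi_0^\d=\xi_0$, $x_0^\d=x_0$ and $\tilde\gamma_0^\d=\tilde\gamma_0=0$.

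Since $\hat n=\liminf_{\d\to0} n_\d$, for $n<\hat n$ we have $n<n_\d$ for all small $\d$, so step $n$ of Algorithm \ref{shbnoisy} is actually executed. By Lemma \ref{minixi}, convergence of $x_{n+1}^\d$ follows from convergence of $\xi_{n+1}^\d$. It therefore suffices to prove
\[
t_n^\d\g_n^\d\to t_n\g_n \qquad\mbox{and}\qquad \beta_n^\d\m_n^\d\to\beta_n\m_n,
\]
and then to pass to the limit in (\ref{tildegamma}) to obtain $\tilde\gamma_{n+1}^\d\to\tilde\gamma_{n+1}$.

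\textbf{Convergence of the quantities built from $x_n^\d$.} Since $x_n^\d\to x_n$ inside $B_{2\rho}(x_0)$, $F_{i_n}$ is continuous there (which follows from (\ref{TCC}) and the boundedness of $L_{i_n}$), and $\|y^\d-y\|\to0$, we get $\mr_n^\d\to\mr_n$. Continuity of $x\mapsto L_{i_n}(x)$ and uniform continuity of $J_r^{{\cal Y}_{i_n}}$ on bounded sets then give $\g_n^\d\to\g_n$. Also $\m_n^\d\to\m_n$ and $\langle\m_n^\d,x_n^\d-x_{n-1}^\d\rangle\to\langle\m_n,x_n-x_{n-1}\rangle$.

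\textbf{The step size.} There are two cases.
\begin{itemize}
\item If $\mr_n\neq0$, then $\|\mr_n^\d\|>\tau\d_{i_n}$ for small $\d$, so $t_n^\d$ is given by the min-formula in (\ref{tndelta}). The term $(1+\eta)\d_{i_n}$ vanishes in the limit, so $t_n^\d\to t_n$. If $\g_n=0$, the first entry of the min tends to $+\infty$ and both sides equal $\mu_1\|\mr_n\|^{2-r}$.
\item If $\mr_n=0$, we avoid $t_n^\d$ itself, which may blow up when $r>2$. Instead we combine the two bounds in (\ref{tndelta}):
\[
t_n^\d\|\g_n^\d\|\le\sqrt{t_n^\d}\,\sqrt{\mu_0}\,\|\mr_n^\d\|^{r/2}\le\sqrt{\mu_0\mu_1}\,\|\mr_n^\d\|\to0=t_n\g_n .
\]
In the same way $t_n^\d\|\mr_n^\d\|^r\le\mu_1\|\mr_n^\d\|^2\to0$ and $\d_{i_n}t_n^\d\|\mr_n^\d\|^{r-1}\to0$.
\end{itemize}
These are exactly the ingredients needed to pass to the limit in (\ref{tildegamma}) and in $t_n^\d\langle\g_n^\d,\m_n^\d\rangle$.

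\textbf{The momentum coefficient.} Set
\[
c_n^\d:=t_n^\d\langle\g_n^\d,\m_n^\d\rangle-2\sigma\tilde\gamma_n^\d, \qquad c_n:=t_n\langle\g_n,\m_n\rangle-2\sigma\tilde\gamma_n .
\]
By the previous steps, $c_n^\d\to c_n$. There are three cases.
\begin{itemize}
\item If $\m_n\ne0$ and $c_n>0$, then both strict inequalities in (\ref{beta0}) hold for small $\d$, because their right-hand sides are $O(\d)$. The formulas coincide in the limit, so $\beta_n^\d\to\beta_n$.
\item If $\m_n\ne0$ and $c_n\le0$, then $\beta_n=0$. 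By definition $0\le\beta_n^\d\|\m_n^\d\|^2\le (c_n^\d)^+\to0$. Since $\|\m_n^\d\|$ is bounded away from $0$, this gives $\beta_n^\d\m_n^\d\to0$.
\item If $\m_n=0$, then $\beta_n=0$. When $\beta<\infty$ we simply use $\|\beta_n^\d\m_n^\d\|\le\beta\|\m_n^\d\|\to0$.
\end{itemize}

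\textbf{Main obstacle.} The delicate case is $\m_n=0$ with $\beta=\infty$. There we need
\[
\beta_n^\d\|\m_n^\d\|\le \frac{(c_n^\d)^+}{\|\m_n^\d\|}\to 0 .
\]
The contributions $t_n^\d|\langle\g_n^\d,\m_n^\d\rangle|/\|\m_n^\d\|$ and $|\langle\m_n^\d,x_n^\d-x_{n-1}^\d\rangle|/\|\m_n^\d\|\le\|\m_n^\d\|/(2\sigma)$ are harmless. The $t_{n-1}^\d$ terms of $-2\sigma\tilde\gamma_n^\d$ satisfy
\[
(1-\eta)t_{n-1}^\d\|\mr_{n-1}^\d\|^r-(1+\eta)\d_{i_{n-1}}t_{n-1}^\d\|\mr_{n-1}^\d\|^{r-1}\ge0
\]
because $\tau>(1+\eta)/(1-\eta)$, so they can only decrease $(c_n^\d)^+$. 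That leaves the term $-2\sigma\beta_{n-1}^\d\tilde\gamma_{n-1}^\d$. I would try to control it by carrying a quantitative bound on $\beta_{n-1}^\d\tilde\gamma_{n-1}^\d$ relative to $\|\m_n^\d\|$ in the induction hypothesis. Failing that, I would use the threshold $\|\m_n^\d\|>\upsilon_0\d_{i_n}$ in (\ref{beta0}) together with a rate for $\m_n^\d\to0$ obtained inductively. This is the step I expect to require the most care.
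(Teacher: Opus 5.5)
Your plan follows the paper's proof closely: induction along a fixed sample path, the cases $\mathfrak{r}_n\neq 0$ and $\mathfrak{r}_n=0$ for the step size, and three cases for $\beta_n^\delta$. The estimates you actually write down are correct. The gap is in the claim that one can then ``pass to the limit in (\ref{tildegamma})''.

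The recursion for $\tilde\gamma_{n+1}^\delta$ contains the term $\beta_n^\delta\tilde\gamma_n^\delta$, so at every step you must also show $\beta_n^\delta\tilde\gamma_n^\delta\to\beta_n\tilde\gamma_n$. The paper carries this explicitly in its induction hypothesis; your list of target quantities leaves it out.

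\begin{itemize}
\item In your first two momentum cases it comes for free, since there you obtain $\beta_n^\delta\to\beta_n$.
\item In the case $\mathfrak{m}_n=0$ you only have $\beta_n=0$ and $0\le\beta_n^\delta\le\beta$. This gives $\beta_n^\delta\tilde\gamma_n^\delta\to 0$ only if $\tilde\gamma_n=0$.
\end{itemize}

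Nothing in your argument excludes $\tilde\gamma_n<0$. In that situation $c_n=-2\sigma\tilde\gamma_n>0$ while $\|\mathfrak{m}_n^\delta\|\to 0$. Then the second condition in (\ref{beta0}) holds for small $\delta$ and $c_n^\delta/\|\mathfrak{m}_n^\delta\|^2\to\infty$. So whenever $\|\mathfrak{m}_n^\delta\|>\upsilon_0\delta_{i_n}$ you get $\beta_n^\delta=\beta$, and nothing prevents $\beta_n^\delta\tilde\gamma_n^\delta\to\beta\tilde\gamma_n\neq 0=\beta_n\tilde\gamma_n$. In that event $\tilde\gamma_{n+1}^\delta\not\to\tilde\gamma_{n+1}$ and the induction breaks at the next step.

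The missing ingredient is a sign property of the exact-data iteration. The same induction that gives $\gamma_k\le\tilde\gamma_k$ in the derivation of (\ref{betan}) uses $x_k\in B_{2\rho}(x_0)$, $\beta_k\ge 0$ and (\ref{TCC}). It yields $\tilde\gamma_n\ge\gamma_n=\langle\mathfrak{m}_n,x_n-\hat x\rangle$ for any solution $\hat x$ in $B_{2\rho}(x_0)\cap{\rm dom}(\Theta)$. Hence $\mathfrak{m}_n=0$ forces $\tilde\gamma_n\ge 0$, and there are two subcases:
\begin{itemize}
\item If $\tilde\gamma_n=0$, boundedness of $\beta_n^\delta$ suffices.
\item If $\tilde\gamma_n>0$, then $\tilde\gamma_n^\delta-\frac{t_n^\delta}{2\sigma}\langle\mathfrak{g}_n^\delta,\mathfrak{m}_n^\delta\rangle\to\tilde\gamma_n>0$. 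So the second condition in (\ref{beta0}) fails and $\beta_n^\delta=0$ for all small $\delta$.
\end{itemize}
This is exactly how the paper closes that case.

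As for your ``main obstacle'' $\beta=\infty$: the paper does not treat it either. Its case $\mathfrak{m}_{k+1}=0$ uses $0\le\beta_{k+1}^\delta\le\beta<\infty$, and the lemma is only invoked in Theorem \ref{6regularization}, where $0\le\beta<1$. So you may simply assume $\beta<\infty$. Your proposed repair would be hard to complete anyway. The inequality $\gamma_n^\delta\le\tilde\gamma_n^\delta$ only yields $\beta_n^\delta\|\mathfrak{m}_n^\delta\|\le\|t_n^\delta\mathfrak{g}_n^\delta\|+2\sigma\|x_n^\delta-\hat x\|$, which need not tend to zero.
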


\begin{proof}
Let $\left(i_0, i_1, \cdots\right)$ be an arbitrary fixed sample path. Along this sample path, we will use an induction argument to show for $n \ge 0$ that
\begin{equation}\label{stability0}
\eqalign{
x_n^\delta \to x_n, \quad \xi_n^\delta \to \xi_n, \quad \tilde \gamma_n^\delta \to \tilde \gamma_n, \quad t_n^\delta \g_n^\delta  \to t_n \g_n,\\
t_n^\delta \left\|\mr_n^\delta\right\|^{r - 1} \to t_n \left\|\mr_n\right\|^{r - 1},
\quad \beta _n^\delta \m_n^\d \to \beta_n \m_n, \quad \beta_n^\delta \tilde \gamma_n^\delta \to \beta_n \tilde \gamma_n
}
\end{equation}
as $\delta \to 0$. Since $\xi_{-1}^\delta=\xi_0^\delta=\xi _{ - 1}= \xi_0$, $x_0^\delta=x_0$ and $\tilde \gamma _0^\delta=\tilde \gamma _0=0$, we have $\beta _0^\delta \m_0^\delta = \beta_0 \m_0 = 0$ and $\beta_0^\delta \tilde \gamma_0^\delta  = \beta_0 \tilde \gamma_0 = 0$. Using again $x_0^\delta=x_0$, we have $\mr_0^\delta\to \mr_0$ as $\d \to 0$. Consequently, by using the same argument for deriving (\ref{tsta}) below, we can show that $t_0^\delta \g_0^\delta \to t_0 \g_0$ and $t_0^\delta \|\mr_0^\delta\|^{r - 1} \to t_0 \|\mr_0\|^{r - 1}$ as $\d \to 0$. Thus the results hold for $n=0$. Assuming now the results in (\ref{stability0}) are true for all integers $0\le n\le k$ for some $k \ge 0$, we prove that the results in (\ref{stability0}) also hold for $n = k+1$.

We first show that
\begin{equation}\label{xixsta}
x_{k+1}^\delta  \to {x_{k+1}},  \quad \xi_{k+1}^\delta \to \xi_{k+1}, \quad \tilde \gamma_{k+1}^\delta \to \tilde \gamma_{k+1} \quad \text{as } \delta \to 0.
\end{equation}
 By the induction hypothesis, the definition of $\xi_{k + 1}^\delta$, $x_{k + 1}^\delta$, and (\ref{2.11}) in Lemma \ref{minixi}, we have
 \begin{eqnarray*}
\xi_{k + 1}^\delta  = \xi_k^\delta - t_k^\delta \g_k^\delta  + \beta_k^\delta \m_k^\delta \to \xi_k - t_k \g_k + \beta_k \m_k = \xi_{k + 1},\\
\|x_{k + 1}^\delta - x_{k+1}\| \le \frac{1}{2\sigma} \|\xi_{k + 1}^\delta - \xi_{k + 1}\| \to 0
\end{eqnarray*}
and
\begin{eqnarray*}
\fl \tilde \gamma_{k + 1}^\delta = \left\langle \m_{k + 1}^\delta, x_{k + 1}^\delta - x_k^\delta \right\rangle - (1 - \eta) t_k^\delta \left\|\mr_k^\delta\right\|^r + (1 + \eta) \delta_{i_k} t_k^\delta \left\|\mr_k^\delta\right\|^{r - 1} + \beta_k^\delta \tilde \gamma_k^\delta \\
\to \left\langle \m_{k + 1}, x_{k + 1} - x_k\right\rangle - (1 - \eta) t_k \left\|\mr_k\right\|^r + \beta_k \tilde \gamma_k = \tilde \gamma_{k + 1}
\end{eqnarray*}
as $\delta \to 0$.

Next we prove
\begin{equation}\label{tsta}
t_{k+1}^\delta \g_{k+1}^\delta \to t_{k+1} \g_{k+1}, \quad t_{k+1}^\delta \left\|\mr_{k+1}^\delta\right\|^{r - 1} \to t_{k+1} \left\|\mr_{k+1}\right\|^{r - 1}
\end{equation}
as $\delta \to 0$ by considering two cases on $\mr_{k+1}$. If $\mr_{k+1}=0$, then $t_{k+1}=0$ and by using $0 \le t_{k + 1}^\delta \le \mu_1 \left\|\mr_{k + 1}^\delta\right\|^{2 - r}$, we have
\[
\fl \left\|t_{k + 1}^\delta \g_{k + 1}^\delta - t_{k + 1} \g_{k + 1}\right\|
= \left\|t_{k + 1}^\delta \g_{k + 1}^\delta\right\| \le \mu_1 B_0 \left\|\mr_{k + 1}^\delta\right\|^{r-1} \to \mu_1 B_0 \left\|\mr_{k + 1}\right\|^{r-1} = 0
\]
and
\[
0 \le t_{k + 1}^\delta \left\|\mr_{k + 1}^\delta\right\|^{r - 1} \le \mu_1 \left\|\mr_{k + 1}^\delta\right\|
\to \mu_1 \left\|\mr_{k + 1}\right\| = 0
\]
as $\delta \to 0$. If $\mr_{k+1} \ne 0$, then we must have $\g_{k+1}\ne 0$ because
\begin{eqnarray*}
\left\langle \g_{k + 1}, x_{k + 1} - \hat x \right\rangle \\
= \left\langle J_r^{\mathcal{Y}_{i_{k + 1}}}(\mr_{k + 1}), \mr_{k + 1} + y_{i_{k + 1}} - F_{i_{k + 1}} \left( {x_{k + 1}} \right) - L_{i_{k + 1}}(x_{k + 1}) \left(\hat x - x_{k + 1}\right) \right\rangle \\
\ge \left\|\mr_{k + 1}\right\|^r - \left\|\mr_{k + 1}\right\|^{r - 1} \left\|y_{i_{k + 1}} - F_{i_{k + 1}} (x_{k + 1}) - L_{i_{k + 1}}(x_{k + 1}) (\hat x - x_{k + 1}) \right\| \\
\ge (1 - \eta) \|\mr_{k + 1}\|^r > 0.
\end{eqnarray*}
By the induction hypothesis, the continuity of $F_{i_{k+1}}$, $L_{i_{k+1}}$ and $J_r^{\mathcal{Y}_{i_{k+1}}}$, we have $\mr_{k+1}^\delta \to \mr_{k+1}$ and $\g_{k+1}^\delta \to \g_{k+1}$ as $\delta \to 0$, and hence $\|\mr_{k+1}^\delta\|> \tau \delta_{i_{k+1}}$, and $\g_{k+1}^\delta \ne 0$ for small $\delta$. Consequently, by the definition of $t_{k+1}^\delta$ and $t_{k+1}$, we arrive at $t_{k+1}^\delta \to t_{k+1}$ as $\delta \to 0$. Therefore the results (\ref{tsta}) hold.

Finally we prove
\begin{equation}\label{betasta}
\beta_{k+1}^\delta \m_{k+1}^\delta \to \beta_{k+1} \m_{k+1} \quad \text{and} \quad \beta_{k+1}^\delta \tilde \gamma_{k+1}^\delta  \to \beta_{k+1} \tilde \gamma_{k+1}
\end{equation}
as $\d \to 0$. To show the assertions, we consider three cases. If $\m_{k+1} \ne 0$ and $\tilde \gamma_{k+1} - \frac{t_{k+1}}{2\sigma} \l \g_{k+1}, \m_{k+1}\r < 0$, then, since (\ref{xixsta}) and (\ref{tsta}) imply that
\begin{equation*}
\fl \m_{k+1}^\d \to \m_{k+1} \quad\mbox{and} \quad \tilde \gamma_{k+1}^\d - \frac{t_{k+1}^\d}{2\sigma} \l \g_{k+1}^\d, \m_{k+1}^\d\r \to \tilde \gamma_{k+1} - \frac{t_{k+1}}{2\sigma} \l \g_{k+1}, \m_{k+1}\r < 0
\end{equation*}
as $\d \to 0$, we can conclude that
\begin{equation*}
\|\m_{k+1}^\d\| > \upsilon_0 \d_{i_{k+1}} ~~~\mbox{and} ~~~ \tilde \gamma_{k+1}^\d - \frac{t_{k+1}^\d}{2\sigma} \l \g_{k+1}^\d, \m_{k+1}^\d\r < - \upsilon_1 \d_{i_{k+1}} \|m_{k+1}^\d\|^2
\end{equation*}
for small $\d >0$. Therefore
\begin{eqnarray*}
\beta_{k+1} = \min\left\{\frac{t_{k+1} \l \g_{k+1}, \m_{k+1}\r - 2\sigma \tilde \gamma_{k+1}}{\|\m_{k+1}\|^2}, \beta\right\}, \\
\beta_{k+1}^\d = \min\left\{\frac{t_{k+1}^\d \l \g_{k+1}^\d, \m_{k+1}^\d\r - 2\sigma \tilde \gamma_{k+1}^\d}{\|\m_{k+1}^\d\|^2}, \beta\right\}.
\end{eqnarray*}
Thus, utilizing (\ref{xixsta}) and (\ref{tsta}), we obtain $\beta_{k+1}^\delta\to \beta_{k+1}$ as $\d \to 0$.
Then, by virtue of (\ref{xixsta}), the assertions (\ref{betasta}) follows.

If $\m_{k+1} \ne 0$ and $\tilde \gamma_{k+1} - \frac{t_{k+1}}{2\sigma} \l \g_{k+1}, \m_{k+1}\r \ge 0$, then  by definition we have $\beta_{k+1} =0$. According to the definition of $\beta_{k+1}^\d$ we have either $\beta_{k+1}^\d = 0$ or
\begin{equation*}
0 \le \beta_{k+1}^\d \le \frac{t_{k+1}^\d \l \g_{k+1}^\d, \m_{k+1}^\d\r - 2\sigma \tilde \gamma_{k+1}^\d}{\|\m_{k+1}^\d\|^2}.
\end{equation*}
By using (\ref{xixsta}) and (\ref{tsta}) we have
\begin{eqnarray*}
\frac{t_{k+1}^\d \l \g_{k+1}^\d, \m_{k+1}^\d\r - 2\sigma \tilde \gamma_{k+1}^\d}{\|\m_{k+1}^\d\|^2}
\to \frac{t_{k+1} \l \g_{k+1}, \m_{k+1}\r - 2\sigma \tilde \gamma_{k+1}}{\|\m_{k+1}\|^2} \le 0
\end{eqnarray*}
as $\d \to 0$. Therefore, we can conclude that $\lim_{\d\to 0} \beta_{k+1}^\d = 0 = \beta_{k+1}$. Consequently, by using (\ref{xixsta}) we obtain (\ref{betasta}).

It remains only to consider the case $\m_{k+1}=0$. We again have $\beta_{k+1}=0$ and by (\ref{xixsta})  we also have $\m_{k+1}^\delta \to 0$ as $\delta \to 0$. Due to $0 \le \beta_{k + 1}^\delta  \le \beta$, it follows that
$\lim_{\delta \to 0} \beta_{k + 1}^\delta \m_{k + 1}^\delta = 0 = \beta_{k + 1} \m_{k + 1}$.
Recall that
\[
\tilde \gamma_{k + 1} \ge \gamma_{k + 1} := \left\langle \m_{k + 1}, x_{k + 1} - \hat x \right\rangle  = 0
\]
and by (\ref{xixsta}) we thus have $\lim_{\delta \to 0} \tilde \gamma_{k + 1}^\delta  = \tilde \gamma_{k + 1} \ge 0$. If $\tilde \gamma_{k + 1} =0$, in view of $0 \le \beta_{k + 1}^\delta \le \beta$, we get that
$\lim_{\delta \to 0} \beta_{k + 1}^\delta \tilde \gamma_{k + 1}^\delta = 0 = \beta_{k + 1} \tilde \gamma_{k + 1}$.
If $\tilde \gamma_{k + 1} >0$, then by noting that
\begin{eqnarray*}
\tilde \gamma_{k+1}^\d - \frac{t_{k+1}^\d}{2\sigma} \l \g_{k+1}^\d, \m_{k+1}^\d\r
\to \tilde \gamma_{k+1} - \frac{t_{k+1}}{2\sigma} \l \g_{k+1}, \m_{k+1}\r = \tilde \gamma_{k+1} >0
\end{eqnarray*}
as $\d \to 0$, which implies that
\begin{equation*}
\tilde \gamma_{k+1}^\d - \frac{t_{k+1}^\d}{2\sigma} \l \g_{k+1}^\d, \m_{k+1}^\d\r > -\upsilon_1 \d_{i_{k+1}} \|\m_{k+1}^\d\|^2
\end{equation*}
for small $\d >0$. Consequently, by the definition of $\beta_{k+1}^\delta$, there holds $\beta_{k+1}^\delta=0$ for small $\delta >0$ and thus $\lim_{\delta \to 0} \beta_{k + 1}^\delta \tilde \gamma_{k + 1}^\delta = 0 = \beta_{k + 1} \tilde \gamma_{k + 1}$. We again obtain (\ref{betasta}). The proof is therefore complete.
\end{proof}

Now we turn to show the almost sure convergence and convergence in expectation of Algorithm \ref{shbnoisy}. 

\begin{theorem}\label{6regularization}
Let Assumptions \ref{5asumption theta} and \ref{5asumption operator} hold. Consider Algorithm \ref{shbnoisy} with $0<\mu_0<4\sigma$, $0\le \beta<1$ and $\tau  > (1+\eta)/(1-\eta)$. Let $\{y^{(l)}: = (y_1^{(l)}, \cdots, y_N^{(l)})\}$ be a sequence of noisy data satisfying $\|y_i^{(l)} - y_i\| \le \delta_i^{(l)}$ with $\delta_i^{(l)}>0$ for $i=1, \cdots, N$ and $\delta^{(l)} := \max\{\delta_i^{(l)}: i = 1, \cdots ,N\} \to 0$  as $l \to \infty$. Let ${n_l}:= {n_{{\delta ^{\left( l \right)}}}}$ be the integer produced by Algorithm \ref{shbnoisy}. Then, there exists a random solution $x_* \in {B_{2\rho} }\left( {{x_0}} \right)\cap {\rm dom}\left( \Theta  \right)$ of (\ref{nonlinear equation}) such that
\[\mathop {\lim }\limits_{l  \to \infty}  {{{\left\| {x_{{n_l }}^{{\delta ^{\left( l \right)}}}  - {x_*}} \right\|}}} = 0 ~~{\rm and}~~\mathop {\lim }\limits_{l  \to \infty} {{D_{\xi _{{n_l }}^{{\delta ^{\left( l \right)}}} }}\Theta \left( {{x_*},x_{{n_l }}^{{\delta ^{\left( l \right)}}} } \right)} = 0~~\text{almost surely}\]
and
\[\mathop {\lim }\limits_{l  \to \infty} \mathbb{E}\left[ {{{\left\| {x_{{n_l }}^{{\delta ^{\left( l \right)}}}  - {x_*}} \right\|}^2}} \right] = 0 ~~{\rm and}~~\mathop {\lim }\limits_{l  \to \infty} \mathbb{E}\left[ {{D_{\xi _{{n_l }}^{{\delta ^{\left( l \right)}}} }}\Theta \left( {{x_*},x_{{n_l }}^{{\delta ^{\left( l \right)}}} } \right)} \right] = 0.\]
Further, if $ {{\rm Ran}\left( {{L_i}\left( x \right)^*} \right)} \subset \overline {{\rm Ran}\left( {{L_i}\left( {{x^\dag }} \right)^*} \right)}$ for all $x \in {B_{2\rho }}\left( {{x_0}} \right)$ and $i=1,2,\dots,N$, then $x_*=x^\dag$ almost surely.
\end{theorem}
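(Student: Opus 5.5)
We follow the standard discrepancy-principle argument for Landweber-type methods, carried out along individual sample paths. Let $\mathcal{E}$ be the probability-one event of Lemma \ref{descentexact}. Let $\mathcal{E}_l$ be the probability-one event of Proposition \ref{smonotoniciy} on which $n_l<\infty$. Set $\Omega_0:=\mathcal{E}\cap\bigcap_l\mathcal{E}_l$, so $\mathbb P(\Omega_0)=1$. On $\Omega_0$ let $x_*$ be the limit given by Theorem \ref{sconvergence} (equal to $x^\dag$ under the range condition). Fix a path in $\Omega_0$. It suffices to show $D_{\xi_{n_l}^{\delta^{(l)}}}\Theta(x_*,x_{n_l}^{\delta^{(l)}})\to 0$ along every subsequence after passing to a further subsequence; strong convexity then gives the norm convergence. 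We split into two cases according to the behaviour of $n_l$ along the chosen subsequence.

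\emph{Case 1: $n_l\to \hat n<\infty$ along a subsequence.} Passing to a further subsequence, $n_l=\hat n$ for all large $l$. By Lemma \ref{sstability}, $x_{\hat n}^{\delta^{(l)}}\to x_{\hat n}$ and $\xi_{\hat n}^{\delta^{(l)}}\to\xi_{\hat n}$. The stopping rule gives $I_{\hat n}=\emptyset$. Hence every index $i$ was removed during the final run of removals, at steps with $t=0$ and $\beta=0$, so $\xi$ and $x$ did not change on that run. Consequently $\|F_i(x^{\delta^{(l)}}_{\hat n})-y_i^{(l)}\|\le\tau\delta_i^{(l)}$ for all $i$. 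Letting $l\to\infty$ yields $F_i(x_{\hat n})=y_i$, so $x_{\hat n}$ is a solution in $B_{2\rho}(x_0)$. By (\ref{monotonicityexact}) with $\hat x=x_{\hat n}$, the exact iterates are then stationary, so $x_{\hat n}=x_*$. Lower semicontinuity of $\Theta$ together with $\xi^{\delta}\to\xi$ and $x^\delta\to x$ gives $\limsup D_{\xi^{\delta^{(l)}}_{\hat n}}\Theta(x_*,x^{\delta^{(l)}}_{\hat n})\le D_{\xi_{\hat n}}\Theta(x_*,x_{\hat n})=0$. More precisely, $\Theta(x_*)-\Theta(x^\delta)-\langle\xi^\delta,x_*-x^\delta\rangle$ has limsup bounded using $\liminf\Theta(x^\delta)\ge\Theta(x_{\hat n})$.

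\emph{Case 2: $n_l\to\infty$.} Fix $\varepsilon>0$ and choose $n$ with $D_{\xi_n}\Theta(x_*,x_n)<\varepsilon$. For large $l$ we have $n_l>n$, and since $\hat n=\liminf n_l=\infty$, Lemma \ref{sstability} applies at this $n$. The monotonicity (\ref{bregdes1}) with $\hat x=x_*$ gives
\[
D_{\xi^{\delta^{(l)}}_{n_l}}\Theta(x_*,x^{\delta^{(l)}}_{n_l})\le D_{\xi^{\delta^{(l)}}_{n}}\Theta(x_*,x^{\delta^{(l)}}_{n}).
\]
The right-hand side tends to $D_{\xi_n}\Theta(x_*,x_n)<\varepsilon$, again by the lower semicontinuity argument above. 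Since $\varepsilon$ is arbitrary, this case is done. Combining the two cases gives almost sure convergence.

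\emph{Convergence in expectation.} By (\ref{bregdes1}) with $\hat x=x_*$ (random, but a solution in $B_{2\rho}(x_0)$ on each path, and the bound is pathwise),
\[
D_{\xi^{\delta}_{n_l}}\Theta(x_*,x^{\delta}_{n_l})\le D_{\xi_0}\Theta(x_*,x_0)\le \Theta(x_*)-\Theta(x_0)-\langle\xi_0,x_*-x_0\rangle .
\]
This quantity is bounded uniformly because $x_*\in B_{2\rho}(x_0)$ and $D_{\xi_0}\Theta(x_*,x_0)\le D_{\xi_0}\Theta(x_*,x_n)$-type bounds hold; in any case it is a fixed integrable random variable independent of $l$. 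Also $\|x^\delta_{n_l}-x_*\|^2\le 16\rho^2$. Dominated convergence then yields both expectation statements.

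The main obstacle is Case 1 and the justification of the limit of the Bregman distances. This requires (a) the claim that the terminating run of removals leaves the iterate unchanged, so that every residual at $x_{\hat n}^\delta$ is small, and (b) handling $\Theta(x^\delta_n)$, which is only lower semicontinuous. For (b), we would use $\Theta(x^\delta_n)\ge\Theta(x_n)+\langle\xi_n,x^\delta_n-x_n\rangle$ to get the needed one-sided bound.
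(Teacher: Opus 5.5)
Your route is the paper's. You use the same probability-one event $\mathcal{E}\setminus\bigcup_l\{n_l=\infty\}$ and the same split according to whether $n_l$ stays bounded. You combine Lemma \ref{sstability} with a one-sided bound on $\Theta(x_n^{\delta})$ and the monotonicity (\ref{bregdes1}) with $\hat x=x_*$, and you finish with dominated convergence using $D_{\xi_0}\Theta(x_*,x_0)$ as the dominating function. Two of your additions tighten points the paper passes over quickly. The first is the subsequence-of-subsequence framing. The second is your check that the final run of removals leaves the iterate unchanged, so that all $N$ residuals at $x^{\delta}_{n_\delta}$ are at most $\tau\delta_i$. Your bound $\|x^{\delta}_{n_l}-x_*\|^2\le 16\rho^2$ also settles the norm part in expectation without going through the Bregman distance.

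The one step that does not stand as written is the integrability of $D_{\xi_0}\Theta(x_*,x_0)$, which the Bregman part in expectation needs. Membership $x_*\in B_{2\rho}(x_0)$ does not bound $\Theta(x_*)$; for the TV penalty (\ref{thetatv}), $\Theta$ takes arbitrarily large values on any ball. The phrase ``$D_{\xi_0}\Theta(x_*,x_n)$-type bounds'' has no meaning here, since $\xi_0$ is not a subgradient at $x_n$. The paper uses the same dominating function without comment, but the step still needs an argument. One way is to start from the definition of the Bregman distance, which gives, for every $n$,
\[
D_{\xi_0}\Theta(x_*,x_0)=D_{\xi_0}\Theta(x^\dag,x_0)-D_{\xi_n}\Theta(x^\dag,x_n)+D_{\xi_n}\Theta(x_*,x_n)+\langle\xi_n-\xi_0,x_*-x^\dag\rangle .
\]
Expand $\xi_n-\xi_0$ by (\ref{xiinduction}) and apply (\ref{TCC}) as in the proof of Theorem \ref{sconvergence}; this works because both $x_*$ and $x^\dag$ solve (\ref{nonlinear equation}). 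It gives $|t_q\langle\mathfrak{g}_q,x_*-x^\dag\rangle|\le 2(1+\eta)t_q\|\mathfrak{r}_q\|^r$. Summing with $\beta_j\le\beta<1$ and using (\ref{monotonicityexact}) with $\hat x=x^\dag$ then yields $|\langle\xi_n-\xi_0,x_*-x^\dag\rangle|\le\frac{2(1+\eta)}{1-\beta}\sum_q t_q\|\mathfrak{r}_q\|^r\le\frac{2(1+\eta)\sigma\rho^2}{(1-\beta)c_1}$. Letting $n\to\infty$, so that $D_{\xi_n}\Theta(x_*,x_n)\to 0$, gives the deterministic bound $D_{\xi_0}\Theta(x_*,x_0)\le\sigma\rho^2\bigl(1+\frac{2(1+\eta)}{(1-\beta)c_1}\bigr)$. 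With this bound your dominated convergence step goes through.
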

\begin{proof}
For each noisy data $y^{(l)}: = (y_1^{(l)}, \cdots, y_N^{(l)})$, let $n_l := n(\d^{(l)}, y^{(l)})$ be determined in Algorithm \ref{shbnoisy}. Consider the even $\Psi^{(l)} :=\{n_l = \infty\}$. It follows from Proposition \ref{smonotoniciy} that $\mathbb{P}\left(\Psi^{(l)}\right) = 0$. Let
\[
\tilde \Psi := \bigcup_{l = 1}^\infty \Psi^{(l)}.
\]
By the countable additivity of probability,  $\mathbb{P}(\tilde \Psi) = 0$. On the other hand, from Theorem \ref{sconvergence} it follows that $\lim_{n \to \infty} D_{\xi_n}\Theta \left(x_*, x_n\right)=0$ on an event $\mathcal{E}$, where $\mathbb{P}\left( \mathcal{E}  \right) = 1$ and  $x_*$ is a random solution of (\ref{nonlinear equation}) in $B_{2\rho}(x_0) \cap {\rm dom}(\Theta)$. Let
\[
\Phi=\mathcal{E}\backslash \tilde \Psi.
\]
Then $\mathbb{P}\left(\Phi\right)=1$ and, along any sample path in $\Phi$, we have $n_l<\infty$ for all $l$ and $\mathop {\lim }\nolimits_{n \to \infty } {D_{{\xi _n}}}\Theta \left( {{x_*},{x_n}} \right)=0$.

In the following we first show that along any fixed sample path in $\Phi$, there holds $\mathop {\lim }\nolimits_{l  \to \infty} {{D_{\xi _{{n_l }}^{{\delta ^{\left( l \right)}}} }}\Theta \left( {{x_*},x_{{n_l}}^{{\delta ^{\left( l \right)}}} } \right)} = 0$ by considering two cases.

{\it Case 1}: $\liminf_{l  \to \infty} n_l = \bar n$ for some finite integer $\bar n$. Then, by taking a subsequence if necessary, we have $n_l=\bar n$ for sufficiently large $l$, and, according to the algorithm design, there holds
\[
\left\|F_i\left(x_{\bar n}^{\delta^{(l)}}\right) - y_i^{(l)}\right\| \le \tau \delta ^{(l)}, \quad i=1,\cdots,N.
\]
By taking $l \to \infty$ and using Lemma \ref{sstability}, it follows that ${F_i}\left( {{x_{\bar n}}} \right) = {y_i}$ for $i=1,\cdots,N$. Thus, utilizing Lemma \ref{descentexact} with $\hat x={x_{\bar n}}$ and the strong convexity of $\Theta$, we infer that $x_n=x_{\bar n}$ for $n \ge \bar n$. Since Theorem \ref{sconvergence} gives  $x_n \to x_*$ as $n\to \infty$, we thus get that $x_{\bar n}=x_*$. Then we may use Lemma  \ref{sstability} and lower semi-continuity of $\Theta$ to deduce that
\begin{eqnarray*}
\fl \limsup_{l \to \infty} D_{\xi_{{n_l}}^{\delta^{(l)}}} \Theta \left(x_{*}, x_{n_l}^{\delta^{(l)}}\right)
& = \limsup_{l \to \infty} D_{\xi_{\bar n}^{\delta^{(l)}}} \Theta \left(x_*, x_{\bar n}^{\delta^{(l)}}\right)\\
& = \Theta (x_*)- \liminf_{l \to \infty} \Theta \left(x_{\bar n}^{\delta^{(l)}}\right) - \lim_{l \to \infty} \left\langle \xi_{\bar n}^{\delta^{(l)}}, x_* - x_{\bar n}^{\delta^{(l)}} \right\rangle \\
& \le \Theta(x_*) - \Theta(x_*) = 0,
\end{eqnarray*}
which leads to $D_{\xi_{{n_l}}^{\delta ^{(l)}}} \Theta \left(x_*, x_{n_l}^{\delta^{(l)}} \right)\to 0$ as $l \to \infty$.

{\it Case 2}: $\liminf_{l \to \infty} n_l = \infty$. Then for any fixed integer $n$, we derive from Lemmas \ref{smonotoniciy}, \ref{sstability} and lower semi-continuity of $\Theta$ that
\begin{eqnarray*}
\fl \limsup_{l \to \infty} D_{\xi_{n_l}^{\delta^{(l)}}} \Theta \left(x_*, x_{n_l}^{\delta^{(l)}}\right)
& \le \limsup_{l \to \infty} D_{\xi_n^{\delta^{(l)}}} \Theta \left(x_*, x_n^{\delta^{(l)}}\right)\\
& \le \Theta \left( {{x_*}} \right) - \mathop {\liminf}\limits_{l \to \infty } \Theta \left( {x_n^{{\delta ^{\left( l \right)}}}} \right) - \mathop {\lim }\limits_{l \to \infty } \left\langle {\xi _n^{{\delta ^{\left( l \right)}}},{x_*} - x_n^{{\delta ^{\left( l \right)}}}} \right\rangle\\[2mm]
& \le \Theta \left( {{x_*}} \right) - \Theta \left( {{x_n}} \right) - \left\langle {{\xi _n},{x_*} - {x_n}} \right\rangle
 = {D_{{\xi _n}}}\Theta \left( {{x_*},{x_n}} \right).
\end{eqnarray*}
Letting $n\to \infty$ and using Theorem \ref{sconvergence}, we can conclude that  $ {D_{\xi _{{n_l }}^{{\delta ^{\left( l \right)}}}}}\Theta \left( {{x_{*}},x_{{n_l }}^{{\delta ^{\left( l \right)}}}} \right)\to 0$ as $l \to \infty$.

We have thus shown that $D_{\xi_{n_l}^{\delta^{(l)}}}\Theta(x_*, x_{n_l}^{\delta^{(l)}})\to 0$ as $l\to \infty$ along any sample path in $\Phi$, which implies that   $D_{\xi_{n_l}^{\delta^{(l)}}}\Theta(x_*, x_{n_l}^{\delta^{(l)}})\to 0$  and then, by the strong convexity of $\Theta$, $\|x_{n_l}^{\delta^{(l)}} - x_*\| \to 0$ as $l\to \infty$ almost surely.
Note that
\[{D_{\xi _{{n_l }}^{{\delta ^{\left( l \right)}}} }}\Theta \left( {{x_{*}},x_{{n_l}}^{{\delta ^{\left( l \right)}}} } \right) \le {D_{{\xi _0}}}\Theta \left( {{x_{*}},{x_0}} \right).\]
Thus, we may use the dominated convergence theorem to conclude that  $\mathbb{E}\left[D_{\xi_{n_l}^{\delta^{(l)}}}\Theta(x_*, x_{n_l}^{\delta^{(l)}}) \right] \to 0$ as $l \to \infty$. By the strong convexity of $\Theta$, there follows $\mathbb{E}\left[\|x_{n_l}^{\delta^{(l)}} - x_*\|^2 \right] \to 0$ as $l \to \infty$.
The proof is thus completed.
\end{proof}

\section{Mini-batch variant of Algorithm \ref{shbnoisy}}\label{sect4}
In section \ref{3themethod}, we have developed an adaptive stochastic heavy ball method and established its convergence property under the {\it a posteriori} stopping rule. In this section, we consider a mini-batch version of Algorithm \ref{shbnoisy}. Instead of utilizing a single randomly selected index $i_n$ as in Algorithm \ref{shbnoisy}, we sample a subset of indices $I_n$ from $\{1,\ldots,N\}$, with batch size $b$, randomly without replacement in each step. The mini-batch version of Algorithm \ref{shbnoisy} is presented below.
\begin{algorithm}[Mini-batch version of adaptive SHB method]\label{minibatch_shb}
Let $\mu_0>0$, $\mu_1>0$, $\tau>1$, $\upsilon_0 >0$, $\upsilon_1>0$, $0\le\beta\le \infty$, $b\ge 1$. Pick an initial guess $(x_0, \xi_0) \in {\rm graph}(\partial \Theta)$. Let $\xi_{-1}^\delta=\xi_0^\delta:=\xi_0$ and $x_0^\delta:=x_0$. Set $\mathscr{A}_0(y^\delta):=\left\{ {1, \cdots, N} \right\}$. For $n\ge 0$, do the following:

\begin{itemize}
\item[\emph{(i)}]  Sample an index set  ${I_n} \subset \left\{ {1, \cdots, N} \right\}$ of size $b$ at random via a uniform distribution, and set
 \[\Gamma _n^\delta \left({I_n}\right): = \{i \in I_n:\left\| {F_i\left( {x_n^\delta } \right) - y_i^\delta } \right\| > \tau {\delta _i} \};\]

\item[\emph{(ii)}]  Compute
\begin{eqnarray*}
\g_n^\delta : = \sum\limits_{i \in \Gamma _n^\delta \left({I_n}\right)} {L_i{{\left( {x_n^\delta } \right)}^*}J_r^{{\cal Y}_i}\left(F_i\left(x_n^\delta \right) - y_i^\delta\right)},\\
\Phi _n^\delta : = \sum\limits_{i \in \Gamma _n^\delta \left(I_n\right)} {\left( {\left(1 -\eta \right)\left\|F_i\left( {x_n^\delta } \right) - y_i^\delta \right\| - \left(1 + \eta \right)\delta _i} \right){\left\|F_i\left( {x_n^\delta } \right) - y_i^\delta \right\|^{r - 1}}},\\
\Psi _n^\delta : = \sum\limits_{i \in \Gamma _n^\delta \left( I_n \right)}\left\|F_i\left( {x_n^\delta } \right) - y_i^\delta \right\|^r
\end{eqnarray*}
and determine $t_n^\delta$ by
\[t_n^\delta  = \left\{ \begin{array}{lll}
  \min \left\{ \frac{\mu _0\Phi _n^\delta}{\left\| {\g_n^\delta } \right\|^2},\mu _1\left( {\Psi _n^\delta } \right)^{\frac{2}{r} - 1}\right\},&\emph{if }\Gamma _n^\delta \left(I_n \right) \ne \emptyset, \\
  0,& \emph{if }\Gamma _n^\delta \left( {{I_n}} \right) = \emptyset;
\end{array} \right.\]

\item[\emph{(iii)}] Set $\m_n^\d := \xi_n^\d - \xi_{n-1}^\d$, compute $\tilde \gamma _n^\delta $ by
\[\tilde\gamma _n^\delta : = \left\{ \begin{array}{lll}
  0,&\emph{if } n = 0, \\
\left\langle \m_n^\delta ,x_n^\delta  - x_{n - 1}^\delta \right\rangle  - t_{n - 1}^\delta \Phi _{n - 1}^\delta  + \beta _{n - 1}^\delta \tilde \gamma _{n - 1}^\delta ,&\emph{otherwise};
\end{array} \right.\]
and determine $\beta_n^\d$ by
\[\fl \quad
\beta _n^\delta  = \left\{ \begin{array}{lll}
\min \left\{ \frac{t_n^\d \l \g_n^\d, \m_n^\d\r - 2 \sigma\tilde \gamma_n^\delta}{\|\m_n^\delta\|^2},\beta \right\}, & \emph{if} \left\{ \begin{array}{lll}
  \|\m_n^\delta\| \ge \upsilon_0 \delta _{\min},  \\
  \tilde \gamma_n^\d - \frac{t_n^\d}{2\sigma} \l \g_n^\d, \m_n^\d\r \le - \upsilon_1 \d_{\min} \|\m_n^\d\|^2,
\end{array}  \right. \\
0, & \emph{otherwise};
\end{array} \right.
\]

\item[\emph{(iv)}] Update $\xi _{n + 1}^\delta$ and $x_{n + 1}^\delta$ by
\begin{eqnarray*}
\xi_{n + 1}^\delta  = \xi_n^\delta  - t_n^\delta \g_n^\delta + \beta_n^\delta \m_n^\delta, \\[1mm]
x_{n + 1}^\delta  = \arg \min_{x \in {\cal X}} \left\{\Theta(x) - \left\langle \xi_{n + 1}^\delta, x \right\rangle \right\};
\end{eqnarray*}

\item[\emph{(v)}] Set
\[
{\mathscr{A}_{n + 1}}({y^\delta }): = \left\{ \begin{array}{lll}
{\mathscr{A}_n}({y^\delta })\backslash \left\{ {{I_n}} \right\}, & \emph{if } \Gamma _n^\delta \left(I_n\right) = \emptyset  \emph{ and } \beta_n^\d =0, \\[2mm]
\left\{ {1,\cdots,N} \right\}, & \emph{otherwise};
\end{array} \right.
\]

\item[\emph{(vi)}] Let ${n_\delta}$ be the first integer such that $\mathscr{A}_{n_\delta}(y^\delta)=\emptyset$ and use $x_{n_\delta}^\delta$ as  an approximate solution.
\end{itemize}
\end{algorithm}

For Algorithm \ref{minibatch_shb}, we can use similar arguments for proving Proposition \ref{smonotoniciy} to show that the stopping index $n_\delta$ is finite almost surely. Following the proof of Theorems \ref{sconvergence} and \ref{6regularization}  with only minor adjustments, we can prove the following convergence property of Algorithm \ref{minibatch_shb}.
\begin{theorem}\label{regularization_minibatch}
Let Assumptions \ref{5asumption theta} and \ref{5asumption operator} hold. Consider Algorithm \ref{minibatch_shb} with $0<\mu_0<4\sigma$, $0\le \beta<1$ and $\tau  > (1+\eta)/(1-\eta)$. Let $\{y^{(l)}: = (y_1^{(l)}, \cdots, y_N^{(l)})\}$ be a sequence of noisy data satisfying $\|y_i^{(l)} - y_i\| \le \delta_i^{(l)}$ with $\delta_i^{(l)}>0$ for $i=1, \cdots, N$ and $\delta^{(l)} := \max\{\delta_i^{(l)}: i = 1, \cdots ,N\} \to 0$  as $l \to \infty$. Let ${n_l}:= {n_{{\delta ^{\left( l \right)}}}}$ be the integer produced by Algorithm \ref{minibatch_shb}. Then, there exists a random solution $x_* \in {B_{2\rho} }\left( {{x_0}} \right)\cap {\rm dom}\left( \Theta  \right)$ of (\ref{nonlinear equation}) such that
\[\mathop {\lim }\limits_{l  \to \infty}  {{{\left\| {x_{{n_l }}^{{\delta ^{\left( l \right)}}}  - {x_*}} \right\|}}} = 0 ~~{\rm and}~~\mathop {\lim }\limits_{l  \to \infty} {{D_{\xi _{{n_l }}^{{\delta ^{\left( l \right)}}} }}\Theta \left( {{x_*},x_{{n_l }}^{{\delta ^{\left( l \right)}}} } \right)} = 0~~\text{almost surely}\]
and
\[\mathop {\lim }\limits_{l  \to \infty} \mathbb{E}\left[ {{{\left\| {x_{{n_l }}^{{\delta ^{\left( l \right)}}}  - {x_*}} \right\|}^2}} \right] = 0 ~~{\rm and}~~\mathop {\lim }\limits_{l  \to \infty} \mathbb{E}\left[ {{D_{\xi _{{n_l }}^{{\delta ^{\left( l \right)}}} }}\Theta \left( {{x_*},x_{{n_l }}^{{\delta ^{\left( l \right)}}} } \right)} \right] = 0.\]
Further, if $ {{\rm Ran}\left( {{L_i}\left( x \right)^*} \right)} \subset \overline {{\rm Ran}\left( {{L_i}\left( {{x^\dag }} \right)^*} \right)}$ for all $x \in {B_{2\rho }}\left( {{x_0}} \right)$ and $i=1,2,\dots,N$, then $x_*=x^\dag$ almost surely.
\end{theorem}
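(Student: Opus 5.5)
The plan is to reproduce the three-stage argument of Section~\ref{3themethod} for Algorithm~\ref{minibatch_shb}: a descent estimate together with almost sure finite termination, convergence of the exact-data counterpart, and then stability combined with the regularization argument.

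\emph{Stage 1: descent and finite termination.} Fix a solution $\hat x\in B_{2\rho}(x_0)\cap{\rm dom}(\Theta)$. Applying (\ref{undelta}) to each $i\in\Gamma_n^\delta(I_n)$ and summing gives
\[
\langle \g_n^\delta,\hat x-x_n^\delta\rangle\le -\Phi_n^\delta .
\]
The choice $t_n^\delta\le \mu_0\Phi_n^\delta/\|\g_n^\delta\|^2$ then reproduces (\ref{betan}) with $\Phi_n^\delta$ in place of the single-index term. The inequality $\gamma_n^\delta\le\tilde\gamma_n^\delta$ follows by the same induction. The bound on $h_n(\beta_n^\delta)$ is obtained exactly as in (\ref{hk}), with $\d_{\min}$ replacing $\d_{i_n}$. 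Since $\Phi_n^\delta\ge(1-\eta-\frac{1+\eta}{\tau})\Psi_n^\delta$, this yields
\[
D_{\xi_{n+1}^\delta}\Theta(\hat x,x_{n+1}^\delta)\le D_{\xi_n^\delta}\Theta(\hat x,x_n^\delta)-c_0t_n^\delta\Psi_n^\delta-\tfrac12\upsilon_1\d_{\min}\beta_n^\delta\|\m_n^\delta\|^2 ,
\]
and hence $x_n^\delta\in B_{2\rho}(x_0)$ by induction. Using $\|\g_n^\delta\|^2\le bB_0^2\sum_{i\in\Gamma_n^\delta}\|\mr_i\|^{2(r-1)}$, I aim for a lower bound of the form
\[
t_n^\delta\Psi_n^\delta\ge\tilde\mu\sum_{i\in\Gamma_n^\delta(I_n)}\|F_i(x_n^\delta)-y_i^\delta\|^2 .
\]
This is the technical point of the stage: the factor $\mu_1(\Psi_n^\delta)^{2/r}$ and the ratio $(\Psi_n^\delta)^2/\sum\|\mr_i\|^{2(r-1)}$ must both be compared with $\sum\|\mr_i\|^2$. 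I would handle this by equivalence of $\ell^p$ norms on $\mathbb R^b$, at the cost of constants depending on $b$ and $r$. Since each $i$ belongs to $I_n$ with probability $b/N$, the conditional expectation gives $\frac bN\sum_{i}\|F_i(x_n^\delta)-y_i^\delta\|^2\chi_{\{\cdot>\tau\delta_i\}}$. The $\Psi$-event argument of Proposition~\ref{smonotoniciy} then shows $\mathbb P(n_\delta=\infty)=0$.

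\emph{Stage 2: exact data.} For the exact-data version ($\Gamma_n=\{i\in I_n:\mr_i\neq0\}$, $\Phi_n=(1-\eta)\Psi_n$), the same argument shows that the event $\mathcal E$ of (\ref{eventomega}) has probability one. The proof of Theorem~\ref{sconvergence} goes through verbatim with $R_n$ unchanged. The only modification is in bounding $|\langle t_q\g_q,x_{n_l}-\hat x\rangle|$: I use
\[
t_q\le\mu_1\Psi_q^{2/r-1}
\]
and Hölder to get $t_q\sum_{i\in\Gamma_q}\|\mr_i\|^{r-1}(\cdots)\le C\,(R_q+R_q^{1/2}R_{n_l}^{1/2})$. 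Again $(\sum_{i}\|\mr_i\|^{r-1})\Psi_q^{2/r-1}\lesssim(\sum\|\mr_i\|^2)^{1/2}$ on $b$ terms. After that, (\ref{nlxiaoyun}) and the Cauchy property of $S_z$ give condition (iv) of Proposition~\ref{prop_con1}. The range condition follows from the analogue of (\ref{xiinduction}), since $\g_q$ is a sum of elements of ${\rm Ran}(L_i(x_q)^*)$.

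\emph{Stage 3: stability and conclusion.} I then prove the analogue of Lemma~\ref{sstability} by the same induction. The new point is that $\Gamma_n^\delta(I_n)$ may differ from $\Gamma_n(I_n)$. For indices with $\mr_i\neq0$, continuity gives membership in $\Gamma_n^\delta$ for small $\delta$. For indices with $\mr_i=0$, the contributions $t\,\|\mr_i^\delta\|^{r-1}$ tend to zero because of the cap $\mu_1(\Psi_n^\delta)^{2/r-1}$. I expect this to be the main obstacle: the mixed case requires showing $t_n^\delta\to t_n$ when $\Gamma_n\neq\emptyset$, and vanishing contributions when $\Gamma_n=\emptyset$. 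The $\beta$ cases are then handled exactly as before, with $\d_{\min}$ thresholds. Finally, the proof of Theorem~\ref{6regularization} applies unchanged. In Case~1, the stopping rule forces $\|F_i(x_{\bar n}^{\delta^{(l)}})-y_i^{(l)}\|\le\tau\delta_i^{(l)}$ for all $i$, since the sets removed from $\mathscr A$ exhaust $\{1,\dots,N\}$ while the iterate is frozen. In Case~2, monotonicity and lower semicontinuity apply. Dominated convergence then gives convergence in expectation.
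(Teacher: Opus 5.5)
Your proposal is correct and takes essentially the same approach as the paper. The paper's own argument for Theorem~\ref{regularization_minibatch} only states that the proofs of Proposition~\ref{smonotoniciy}, Theorem~\ref{sconvergence} and Theorem~\ref{6regularization} carry over to Algorithm~\ref{minibatch_shb} with minor adjustments; your three stages supply those adjustments, and they check out: the $b$-dependent $\ell^p$-equivalence bounds for $t_n\Psi_n$ and for $t_q\sum_{i\in\Gamma_q}\|\mr_i\|^{r-1}$, the inclusion probability $b/N$, the frozen-iterate justification of Case~1, and the treatment of $\Gamma_n^\delta(I_n)\neq\Gamma_n(I_n)$ in the stability induction.
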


\section{Numerical experiments}\label{3numbericalex}
In this section, we provide some numerical simulations to validate the effectiveness of SHB method, i.e., Algorithm \ref{shbnoisy}. To this end, we compare its computational performance with that of the stochastic gradient descent (SGD) method equipped with an {\it a posteriori} stopping rule, which can be stated as follows.

\begin{algorithm}[SGD with noisy data]\label{sgdnoisy}
Let $\mu_0>0$, $\mu_1>0$, and $\tau>1$. Pick an initial guess $(x_0, \xi_0) \in {\rm graph}(\partial \Theta)$. Let $x_0^\delta:=x_0$ and $\xi_0^\delta:=\xi_0$. Set $I_0(y^\delta):=\left\{ {1, \cdots, N} \right\}$. For $n\ge 0$, do the following:

\begin{itemize}
\item[\emph{(i)}]  Sample an index  ${i_n} \in \left\{ {1, \cdots, N} \right\}$ at random via a uniform distribution;

\item[\emph{(ii)}]  Compute $\mr_n^{\delta}:=F_{i_n} \left(x_n^\delta\right) - y_{i_n}^\delta$, $\g_n^\delta:= L_{i_n}\left(x_n^\delta\right)^* J_r^{{\cal Y}_{i_n}} \left(\mr_n^\delta\right)$, and determine $t_n^\delta$ by (\ref{tndelta});

\item[\emph{(iii)}] Update $\xi _{n + 1}^\delta$ and $x_{n + 1}^\delta$ by
\begin{eqnarray*}
\xi_{n + 1}^\delta  = \xi_n^\delta  - t_n^\delta \g_n^\delta, \quad
x_{n + 1}^\delta  = \arg \min_{x \in {\cal X}} \left\{\Theta(x) - \left\langle \xi_{n + 1}^\delta, x \right\rangle \right\};
\end{eqnarray*}

\item[\emph{(iv)}] Set
\[
{I_{n + 1}}({y^\delta }): = \left\{ \begin{array}{lll}
{I_n}({y^\delta })\backslash \left\{ {{i_n}} \right\}, & \emph{if } t_n^\d =0, \\[2mm]
\left\{ {1,\cdots,N} \right\}, & \emph{otherwise};
\end{array} \right.
\]

\item[\emph{(v)}] Let ${n_\delta}$ be the first integer such that $I_{n_\delta}(y^\delta)=\emptyset$ and use $x_{n_\delta}^\delta$ as  an approximate solution.
\end{itemize}
\end{algorithm}

Note that Algorithm \ref{sgdnoisy} is in fact a special case of Algorithm \ref{shbnoisy} with $\beta = 0$. Therefore, the convergence properties established in section \ref{3themethod} can be straightforwardly applied to Algorithm \ref{sgdnoisy}. We compare the SHB method (Algorithm \ref{shbnoisy}) with the SGD method (Algorithm \ref{sgdnoisy}) to demonstrate that incorporating a momentum term can significantly accelerate convergence, thereby justifying the study of the stochastic heavy ball method.

For fair comparison, both the SHB and SGD methods are executed independently 100 times in all following experiments. During the implementation, a key issue is to solve the minimization problem
\begin{equation}\label{minimization problem}
x = \arg \min_{z \in \mathcal{X}} \left\{\Theta(z) - \langle \xi ,z \rangle \right\}
\end{equation}
for any $\xi  \in {\mathcal{X}}$. Below are two common choices of $\Theta$. In case  $\mathcal{X}={L^2}\left( \Omega \right)$ and the sought solution satisfies the constraint $x\in \mathcal{C}$ with $\mathcal{C} \subset \mathcal{X}$ being a closed convex set, we will choose
\begin{equation}\label{thetanonnegative}
\Theta(x) = \frac{1}{{2}} \|x\|_{L^2(\Omega)}^2 + \iota_{\mathcal{C}}(x),
\end{equation}
where $\iota_{\mathcal{C}}$ is the indicator function of $\mathcal{C}$. Obviously, this $\Theta$ satisfies Assumption \ref{5asumption theta} with $\sigma = 1/2$ and the minimizer of (\ref{thetanonnegative}) is given by
$x = P_{\mathcal C}(\xi)$, where $P_{\mathcal C} : {\mathcal X} \to {\mathcal C}$ denotes the orthogonal projection of ${\mathcal X}$ onto ${\mathcal C}$. In particular, when $\mathcal{C} = \left\{x \in \mathcal{X}: x \ge 0~\text{a.e. on}~\Omega\right\}$, the minimizer of  (\ref{minimization problem}) has the explicit formula
\begin{equation*}
x = \max \left\{\xi ,0\right\}.
\end{equation*}
In case $\mathcal{X}={L^2}\left( \Omega \right)$ and the sought solution is piecewise constant, we will take
\begin{equation}\label{thetatv}
\Theta(x) = \frac{1}{2\lambda} \|x\|_{L^2(\Omega)}^2 + |x|_{TV},
\end{equation}
where $|x|_{TV}$ is the total variation of $x$ and $\lambda>0$. The function $\Theta$ in (\ref{thetatv})  satisfies Assumption \ref{5asumption theta} with $\sigma=(2\lambda)^{-1}$, and (\ref{minimization problem}) becomes the total variation denoising problem (\cite{Rudin1992Nonlinear})
\begin{equation}\label{TVSOLVE}
x = \arg \min_{z \in {L^2(\Omega)}} \left\{\frac{1}{2\lambda} \|z - \lambda \xi\|_{L^2(\Omega)}^2 + |z|_{TV} \right\}
\end{equation}
which can be solved by many efficient algorithms (\cite{Jin2025_book}). During the computation, we solve the minimization problem (\ref{TVSOLVE}) by the primal dual hybrid gradient method \cite{Zhu2008An}, which is terminated if the relative duality gap is small than $10^{-3}$ or the number of iterations exceeds 100.

\subsection{The first kind integral equation}

We consider the first kind Fredholm integral equation
\[\int_0^1 {\kappa \left( {s,t} \right)x\left( t \right)dt = y\left( s \right)},~~~~s \in \left[ {0,1} \right]\]
with $\kappa \left( {s,t} \right) = 4{e^{ - {{{\left( {s - t} \right)}^2} \mathord{\left/
 {\vphantom {{{{\left( {s - t} \right)}^2}} {0.01}}} \right.
 \kern-\nulldelimiterspace} {0.01}}}}$. By acquiring the data $y_i=y\left(s_i\right),~i=1,\ldots,N$, at $N$ sample points $s_i$ in $\left[ {0,1} \right]$, the determination of $x\left( t \right)$ reduces to solve the linear system of the form (\ref{nonlinear equation}), that is,
\[{F_i}x: = \int_0^1 {\kappa \left( {{s_i},t} \right)x\left( t \right)dt = {y_i}},\quad i = 1,...,N,\]
where, for each $i$, ${F_i}:{L^2}\left[ {0,1} \right] \to \mathbb{R}$ is a bounded linear operator. Obviously, the condition (\ref{TCC}) in Assumption \ref{5asumption operator} holds with $\eta=0$.

In our numerical simulations, we take $N=300$ and ${s_i} = {{\left(i - 1\right)} \mathord{\left/{\vphantom {\left(i - 1\right)} \left(N - 1\right)} \right.\kern-\nulldelimiterspace}}$ for $i=1,\ldots,N$.  Assume that the sought solution is nonnegative, given by
\[{x^\dag }\left( t \right) = \max \left\{ {40t\left( {t - 0.25} \right)\left( {0.8-t} \right),0} \right\}.\]
Let $y: = \left( {{y_1},...,{y_N}} \right)$ be the exact data with ${y_i} = {F_i}{x^\dag }$. Instead of $y$, we use the noisy data
\[y_i^\delta  = {y_i} + {\delta_{\rm rel}\left\| y \right\|_\infty }{\varepsilon _i},~i=1,\ldots,N,\]
where $\varepsilon _i$ obeys the uniform distribution on $\left[ {-1,1} \right]$ and $\delta_{\rm rel}$ is the relative noise level. Then the noise level is $\delta _i = \delta _{\rm rel}\left\| {y} \right\|_\infty,i=1,2,\ldots,N$. The interval $\left[ {0,1} \right]$ is divided into $N-1$ subintervals of equal size and all integrals are approximated by the trapezoidal rule. To reconstruct the solution $x^\dag$, we take $\Theta$ to be of the form  (\ref{thetanonnegative}) and $\xi_0=x_0=0$. When implementing Algorithm \ref{shbnoisy}, we use the following parameter values
$$\mu_0=0.7,~\mu_1=10^{4},~\tau=1.2.$$
The momentum coefficient $\beta_n^\delta$ is determined by choosing $\beta=0.99$ and considering three distinct pairs of $(v_0,v_1)$. The same parameters are used for executing the SGD method.

\begin{table}[htbp]
\centering
\caption{Numerical results for the first kind integral equation.}
\begin{tabular}{llllll}
\hline
$\delta_{\rm rel}$~&~Method~&~$\left({\upsilon _0},{\upsilon _1} \right)$~&~{\sf iter}~&~{\sf time}(s)&~{\sf error}\\
\hline
\multirow{4}{*}{0.5}&
~SGD~&~-~&~4320~~~~~&~0.0604~&~2.3766e-3~\\
~&~SHB~&~$\left(10^{-5},10^{-4}\right)$~&~2704~~~~~&~0.0403~&~2.8870e-3~\\
~&~SHB~&~$\left(10^{-5},10^{-5}\right)$~&~2937~~~~~&~0.0429~&~2.8553e-3~\\
~&~SHB~&~$\left(10^{-6},10^{-5}\right)$~&~2823~~~~~&~0.0419~&~2.8385e-3~\\
\hline
\multirow{4}{*}{0.1}&
~SGD~&~-~&~14482~~~~~&~0.2545~&~6.3569e-4~\\
~&~SHB~&~$\left(10^{-5},10^{-4}\right)$~&~7056~~~~~&~0.1284~&~6.8774e-4~\\
~&~SHB~&~$\left(10^{-5},10^{-5}\right)$~&~7004~~~~~&~0.1284~&~6.6232e-4~\\
~&~SHB~&~$\left(10^{-6},10^{-5}\right)$~&~6930~~~~~&~0.1250~&~6.6826e-4~\\
\hline
\multirow{4}{*}{0.05}&
~SGD~&~-~&~33387~~~~~&~0.4580~&~2.2069e-4~\\
~&~SHB~&~$\left(10^{-5},10^{-4}\right)$~&~15698~~~~~&~0.2220~&~2.3541e-4~\\
~&~SHB~&~$\left(10^{-5},10^{-5}\right)$~&~16001~~~~~&~0.2276~&~2.7140e-4~\\
~&~SHB~&~$\left(10^{-6},10^{-5}\right)$~&~16082~~~~~&~0.2271~&~2.6492e-4~\\
\hline
\multirow{4}{*}{0.01}&
~SGD~&~-~&~388084~~~~~&~6.8017~&~1.3513e-4~\\
~&~SHB~&~$\left(10^{-5},10^{-4}\right)$~&~45536~~~~~&~0.8444~&~1.7383e-4~\\
~&~SHB~&~$\left(10^{-5},10^{-5}\right)$~&~42222~~~~~&~0.7088~&~1.2919e-4~\\
~&~SHB~&~$\left(10^{-6},10^{-5}\right)$~&~41616~~~~~&~0.6981~&~1.3885e-4~\\
\hline
\multirow{4}{*}{0.005}&
~SGD~&~-~&~1842074~~~~~&~25.8711~&~3.3416e-5~\\
~&~SHB~&~$\left(10^{-5},10^{-4}\right)$~&~95172~~~~~&~1.2881~&~3.9589e-5~\\
~&~SHB~&~$\left(10^{-5},10^{-5}\right)$~&~93240~~~~~&~1.5816~&~3.9741e-5~\\
~&~SHB~&~$\left(10^{-6},10^{-5}\right)$~&~96636~~~~~&~2.0678~&~3.6997e-5~\\
\hline
\end{tabular}
\label{example1}
\end{table}

In Table \ref{example1}, we report the numerical results of the SHB method under various relative noise levels $\delta_{\rm rel}$ and parameter pairs $(v_0,v_1)$. Here, ``{\sf iter}'' records the number of iterations, ``{\sf time}'' represents the computational time and ``{\sf error}'' describes the mean squared relative error $\mathbb{E}\left[\|x_{n_\delta}^\delta  - x^\dag\|^2/\|x^\dag\|^2\right]$. All these quantities are computed as the average over 100 independent runs. For comparison, we also report the results of the SGD method from the same number of independent simulations. These results indicate that both the SHB and SGD methods, coupled with the proposed {\it a posteriori} stopping rule, terminate in finitely many iterations. Moreover, the SHB method, with these values of $(v_0,v_1)$, requires fewer iterations and less computational time than the SGD method, while producing comparable reconstructions with similar relative error. This underscores the striking efficiency of SHB method. To visualize the performance, Figure \ref{error_int} shows the evolution of the squared relative error $\|x_n^\delta - x^\dag\|^2/\|x^\dag\|^2$ with $\delta_{\rm rel}=0.05$ for a single run. The plot clearly demonstrate the acceleration effect of the SHB method.

\begin{figure}[h]
\centering
\includegraphics[scale=0.6]{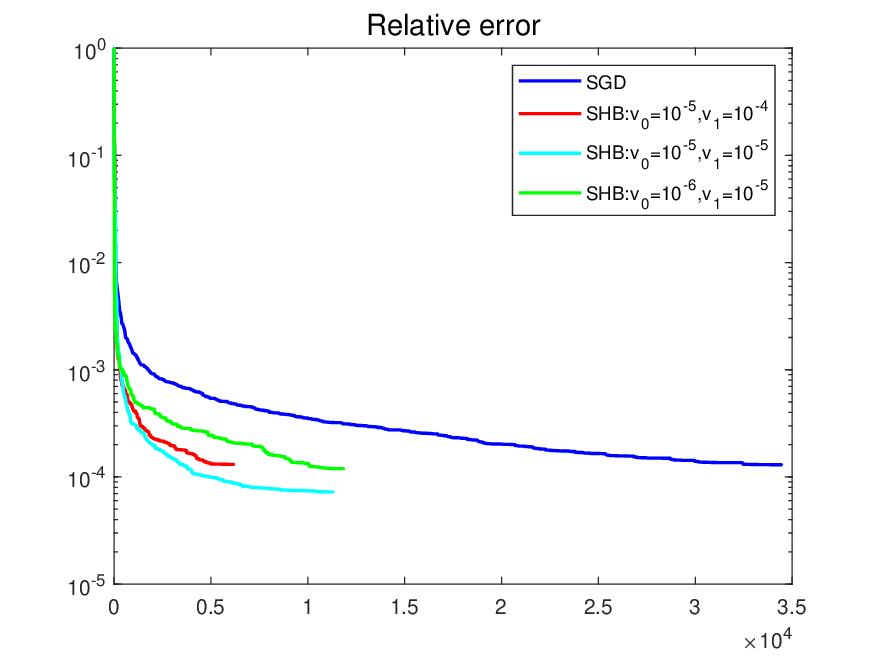}
\vspace*{-5pt}
\caption{Relative error versus the number of iterations with $\delta_{\rm rel}=0.05$.}
\label{error_int}
\end{figure}

To further investigate the convergence behavior of the SHB method, we set $v_0=10^{-6}$, $v_1=10^{-5}$ and display in Figure \ref{boxplot_int} the boxplots of the relative error of the approximate solutions $\|x_{n_\delta}^\delta  - x^\dag\|^2/\|x^\dag\|^2$ and the iteration number $n_\delta$ from 100 simulations under different noise levels. In each box, the central mark denotes the median, while the edges of the box indicate the 25th (bottom) and 75th (top) percentiles. The whiskers extend to the most extreme data points not considered outliers, and outliers (marked by red crosses) are plotted individually. It can be observed that as the noise level decreases, the relative error reduces, and the number of iterations required to satisfy the {\it a posteriori} stopping rule increases. This observation is consistent with the convergence property in Theorem \ref{6regularization}.
\begin{figure}[h]
\centering
\subfigtopskip=2pt
\subfigbottomskip=6pt
\subfigcapskip=-5pt
\subfigure[]{
\includegraphics[scale=0.43]{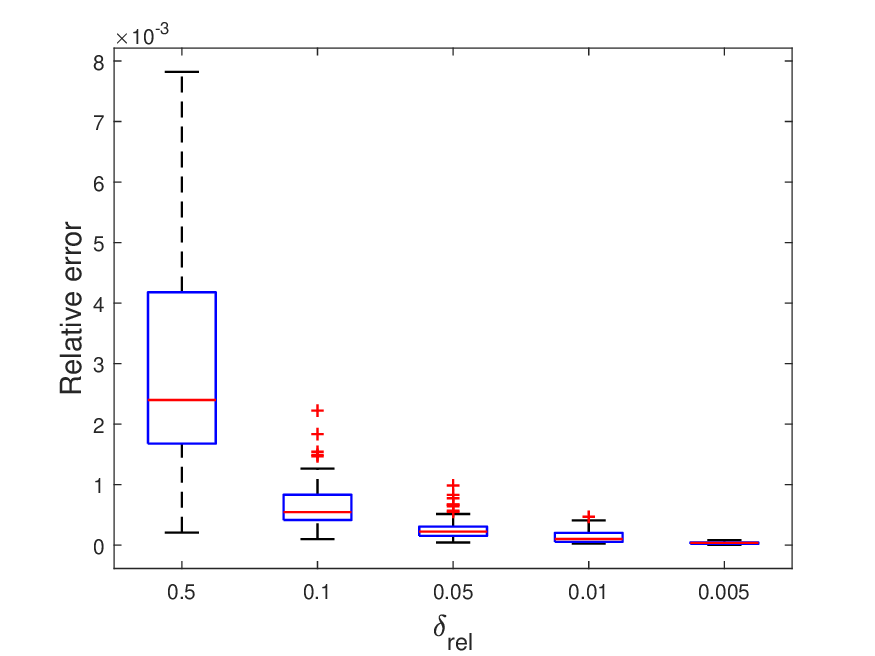}}
\hspace{-1em}
\subfigure[]{
\includegraphics[scale=0.43]{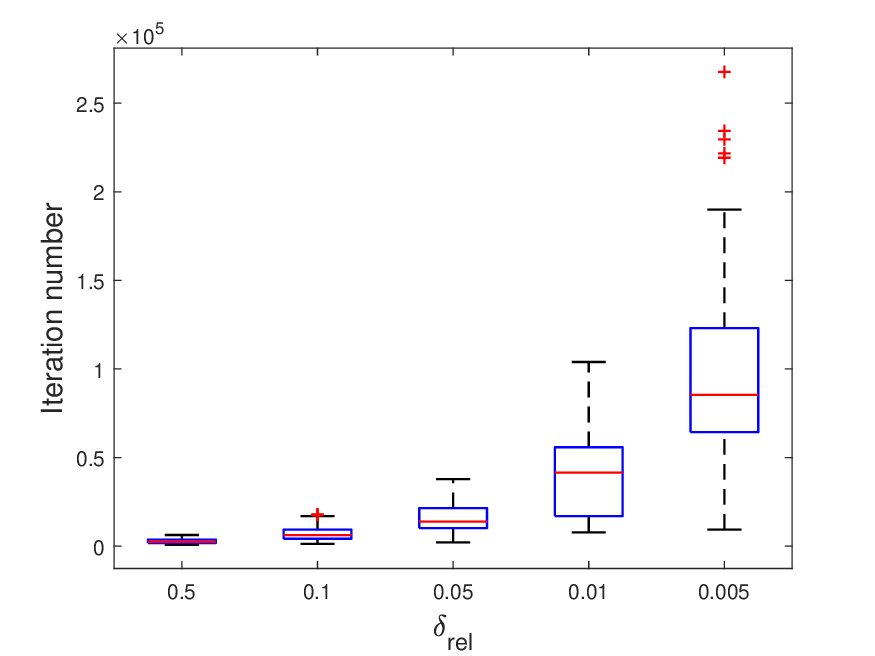}}
\caption{Boxplots of the relative error and iteration number $n_\delta$ from 100 simulations for the first kind integral equation. }
\label{boxplot_int}
\end{figure}

\subsection{Computed tomography}
We next consider the problem of computed tomography (CT), which involves estimating the density of cross sections of an object by measuring the attenuation of X-rays as they pass through the object. This can be mathematically formulated as the reconstruction of a compactly supported function from its Radon transform \cite{Natterer2001}.

In our experiments, the sought image is assumed to be supported on a square domain in $\mathbb{R}^2$ and discretized on a $256\times256$ pixel grid. The image is expressed as a vector $x\in \mathbb{R}^Q$ with $Q=256\times256$  by stacking all its columns. We consider the 2D parallel beam geometry with 45 projection angles equally distributed over 180 degrees, and 360 lines per projection. By using the function \texttt{paralleltomo} in the Matlab package AIR TOOLS \cite{Hansen2012} to generate the discrete problem, we can derive a linear ill-conditioned  system $y=Fx$, where $F$ is a sparse matrix with size $M\times Q$. Here, $M=16200$ and $Q=16384$. Let $F_i$ denote the $i$th row of $F$, and $y_i$ the corresponding component of $y$. This results in a linear system of the form (\ref{nonlinear equation}), that is,
\[
F_i x = y_i, \quad i = 1, \cdots, N
\]
with $N=16200$, where each $F_i: \mathbb{R}^Q \to \mathbb{R}$ is a bounded linear operator. It is evident that the condition (\ref{TCC}) is satisfied with $\eta =0$. In our simulations, the sought image $x^\dag$ is taken to be the Shepp-Logan phantom, depicted in Figure \ref{ct}(a). Let $y_i={F_i}x^{\dag}$, $i=1,\cdots,N$, be the exact data. The noisy data $y_i^{\delta}$ is generated by
\begin{equation}\label{gaussion noise}
y_i^\delta  = {y_i} + {\delta}{\varepsilon _i}, \quad i=1, \cdots, N,
\end{equation}
where each $\varepsilon_i$ follows the standard Gaussian distribution and $\delta$ denotes the noise level. To reconstruct $x^{\dag}$, we use mini-batch variants of the SHB and SGD methods with $b=1800$ and $\Theta$ defined as in (\ref{thetanonnegative}). For Algorithm \ref{minibatch_shb}, we take $\xi_0=x_0=0$ as the initial guess and pick
\[\mu_0=1,~\mu_1=10^{4},~\tau=2.\]
For the coefficient $\beta_n^\delta$, we use $\beta=0.99$ and consider three distinct pairs of parameters $\left({\upsilon _0},{\upsilon _1} \right)$. The same setup is used for the mini-batch SGD method.

\begin{table}[htbp]
\centering
\caption{Numerical results for CT.}
\begin{tabular}{llllll}
\hline
~$\delta$~&~Method~&~$\left({\upsilon _0},{\upsilon _1} \right)$~&~{\sf iter}~&~{\sf time}(s)&~{\sf error}\\
\hline
\multirow{4}{*}{0.5}&
~SGD~&~-~&~9466~~~~~&~29.0723~&~3.7107e-2~\\
~&~SHB~&~$\left(10^{-6},10^{-5}\right)$~&~3215~~~~~&~10.0833~&~3.6919e-2~\\
~&~SHB~&~$\left(10^{-6},10^{-6}\right)$~&~3497~~~~~&~11.0013~&~3.6922e-2~\\
~&~SHB~&~$\left(10^{-7},10^{-6}\right)$~&~3344~~~~~&~10.4534~&~3.6955e-2~\\
\hline
\multirow{4}{*}{0.1}&
~SGD~&~-~&~32202~~~~~&~99.8654~&~2.1520e-2~\\
~&~SHB~&~$\left(10^{-6},10^{-5}\right)$~&~9362~~~~~&~29.6030~&~2.0580e-2~\\
~&~SHB~&~$\left(10^{-6},10^{-6}\right)$~&~9765~~~~~&~31.1790~&~2.0587e-2~\\
~&~SHB~&~$\left(10^{-7},10^{-6}\right)$~&~10191~~~~~&~32.1160~&~2.0578e-2~\\
\hline
\multirow{4}{*}{0.05}&
~SGD~&~-~&~64400~~~~~&~204.8162~&~1.8025e-2~\\
~&~SHB~&~$\left(10^{-6},10^{-5}\right)$~&~16578~~~~~&~53.3191~&~1.7144e-2~\\
~&~SHB~&~$\left(10^{-6},10^{-6}\right)$~&~16466~~~~~&~54.9784~&~1.7163e-2~\\
~&~SHB~&~$\left(10^{-7},10^{-6}\right)$~&~16605~~~~~&~53.4942~&~1.7152e-2~\\
\hline
\end{tabular}
\label{example2}
\end{table}

 Table \ref{example2} summarizes the numerical results of the mini-batch variants of the SHB and SGD methods under various values of $\delta$ and $\left({\upsilon _0},{\upsilon _1} \right)$. The results include the number of iterations ``{\sf iter}'', the computational time  ``{\sf time}'' and the mean squared relative error ``{\sf error}'', denoted by $\mathbb{E}\left[\| x_{n_\delta}^\delta  - {x^\dag }\|^2/\| {x^\dag }\|^2\right]$,  averaged over 100 independent runs. As can be observed, the mini-batch SHB method outperforms the mini-batch SGD method in terms of iteration number and computational time, indicating the acceleration effect of the SHB method. To visualize the performance, Figure \ref{ct} presents the results from a single run with $\delta=0.1$. Figures \ref{ct}(b)-(c) depict the reconstructions obtained by SGD and SHB with $v_0=10^{-7}$ and $v_1=10^{-6}$, respectively. Figure \ref{ct}(d) plots the evolution of the squared relative error $\| x_{n}^\delta - x^\dag\|^2/\|x^\dag\|^2$.

\begin{figure}[h]
\centering
\subfigtopskip=2pt
\subfigbottomskip=6pt
\subfigcapskip=-5pt
\subfigure[]{
\includegraphics[scale=0.43]{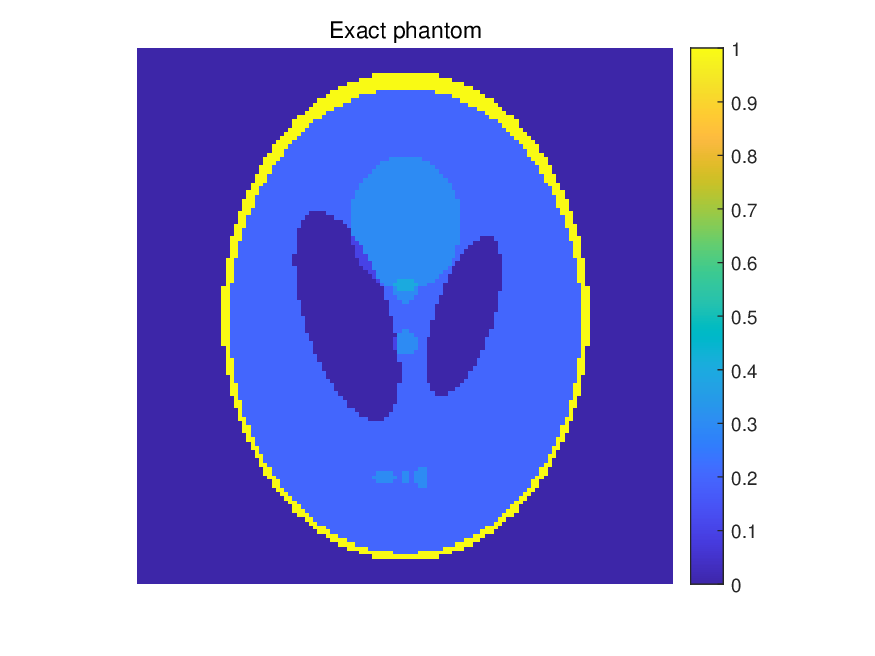}}
 \hspace{-1em}
\subfigure[]{
\includegraphics[scale=0.43]{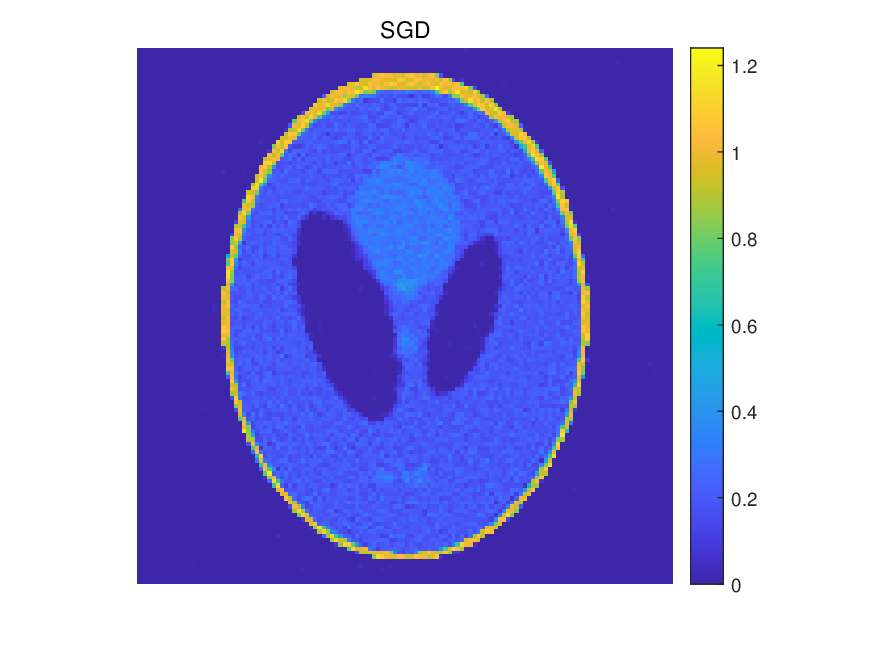}}\\[-2.5mm]
\subfigure[]{
\includegraphics[scale=0.43]{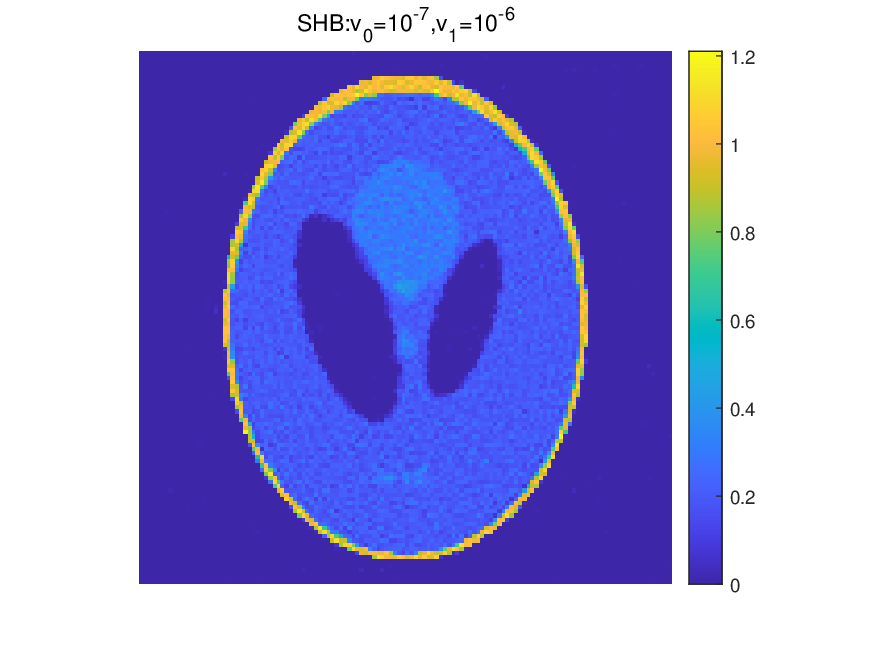}}
 \hspace{-1em}
\subfigure[]{
\includegraphics[scale=0.43]{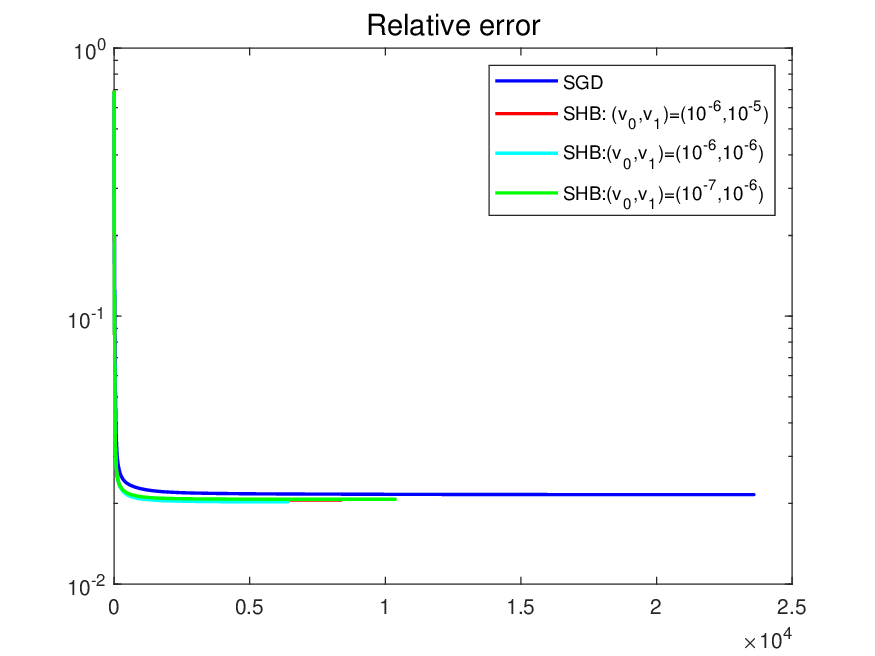}}
\vspace*{-5pt}
\caption{Results for CT with ${\delta}=0.1$. (a) True solution; (b)-(c) reconstructions by SGD and SHB with $v_0=10^{-7}$ and $v_1=10^{-6}$, respectively; (d) evolution of the relative error.}
\label{ct}
\end{figure}

\begin{figure}[h]
\centering
\vspace{-0.35cm}
\subfigtopskip=2pt
\subfigbottomskip=6pt
\subfigcapskip=-5pt
\subfigure[]{
 \includegraphics[scale=0.43]{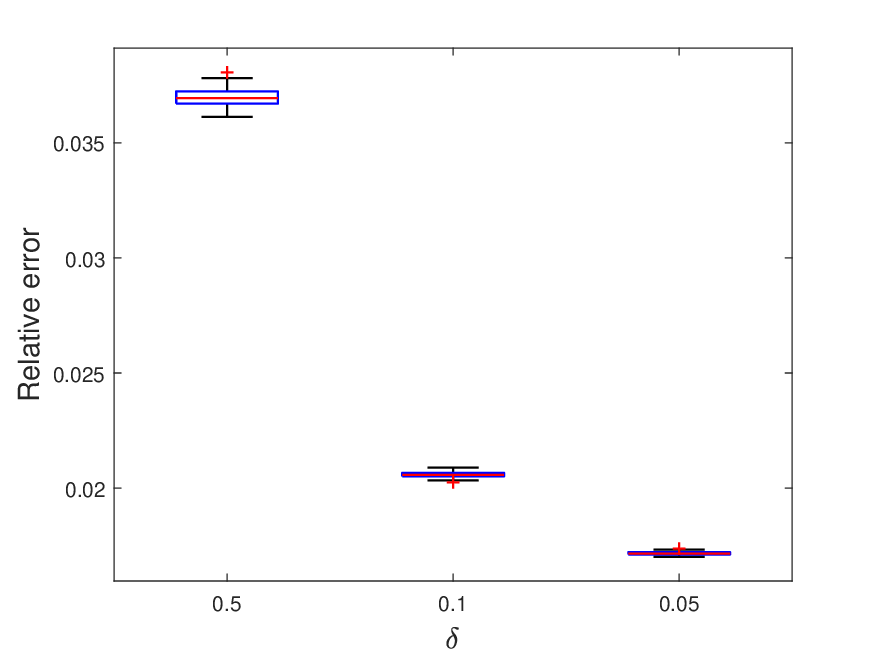}}
 \hspace{-1em}
 \subfigure[]{
 \includegraphics[scale=0.43]{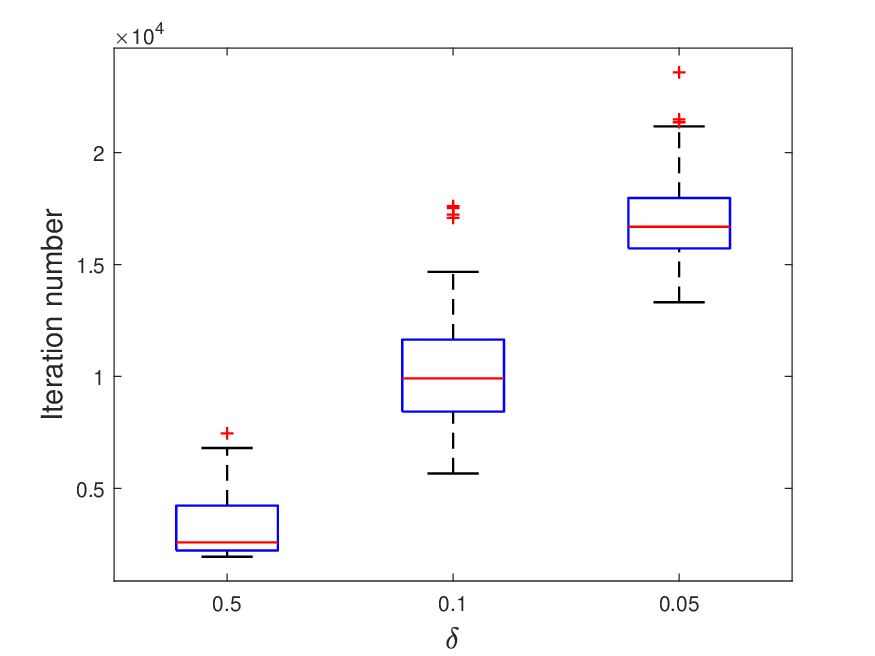}}
\vspace*{-5pt}
\caption{Boxplots of the relative error and iteration number $n_\delta$ from 100 simulations for CT. }
\label{boxplot_ct}
\end{figure}

To investigate the convergence behavior of the mini-batch SHB method, we fix $v_0=10^{-7}$ and $v_1=10^{-6}$. Figure \ref{boxplot_ct} displays the boxplots of the relative error of the approximate solutions  $\| x_{n_\delta}^\delta - x^\dag\|^2/\|x^\dag\|^2$ and the iteration number $n_\delta$, based on 100 simulations under varying noise levels. It can be seen that as the noise level decreases, the relative error reduces; however, the number of iterations required to satisfy the {\it a posteriori} stopping rule increases significantly. This observation further confirms the convergence property in Theorem \ref{regularization_minibatch}.

\begin{table}[htbp]
\centering
\caption{Numerical results of mini-batch SHB method with different $b$.}
\begin{tabular}{lllllll}
\hline
$b$~~~~~~~~~&10~~~~~~~~~~&200~~~~~~~~~~&600~~~~~&1800~~~~&5400~~~&16200\\
\hline
{\sf iter}~&272298~~&34205~~~&17014~&10191~~&6938~~&3894~~\\
{\sf time}(s)~&268.7844~&44.7848~&32.8839~&32.1160~&50.5485~&66.2193\\
{\sf error}~&2.109e-2~&2.074e-2~&2.061e-2~&2.057e-2~&2.079e-2~&2.099e-2\\
\hline
\end{tabular}
\label{test_batch}
\end{table}

 Finally, to examine the impact of the batch size $b$, we apply the mini-batch SHB method with varying values of $b$ to solve the CT problem with $\delta=0.1$. We set $v_0=10^{-7}$ and $v_1=10^{-6}$, while all other settings remain the same as in the previous experiments. For each batch size, 100 independent simulations are conducted. The results for different values of $b$ are presented in Table \ref{test_batch}. We observe that the reconstructed solutions obtained by varying $b$ have similar relative errors. However, the number of iterations decreases significantly as $b$ increases. This trend does not extend to computational time: although the computing time reduces as $b$ grows from 10 to 1800, it rises again for larger $b$. The choice of $b$ for optimal performance warrants further investigation.

\subsection{Schlieren imaging}
Next we consider the problem of reconstructing the 3D pressure fields on cross-section of a water tank generated by an ultrasound transducer from Schlieren data \cite{Hanafy1991}. The objective is to recover a function $f$, supported on a bounded domain $D\subset \mathbb{R}^2$, from
\[
\mathcal{S}f(s,\sigma) = \left(\int_{\mathbb{R}} f(s\sigma + r\sigma^\bot) dr \right)^2, \quad (s,\sigma) \in \mathbb{R} \times \mathbb{S}^1.
\]
In our experiments, we pick $D = [-1, 1]^2$ and collect the data along $N=60$ directions $\sigma_i \in \mathbb{S}^1$, $i=1,\cdots, N$, uniformly distributed on the half circle. 
The reconstruction of $f$ then reduces to solve the nonlinear system of the form (\ref{nonlinear equation}), that is,
\begin{equation}\label{nonsystem}
F_i(f) = y_i, \quad i = 1, \cdots, N,
\end{equation}
where
\[
[F_i(f)](s): = \mathcal{S}f(s, \sigma_i): = \left(R_i f(s)\right)^2 = y_i, \quad i = 1, \cdots, N.
\]
Here, $R_i f(s) := \int_{\mathbb{R}} f(s{\sigma_i} + r\sigma_i^\bot) dr$ denotes the Radon transform in the direction $\sigma_i$. It is known that each $F_i: \mathcal{X} := H_0^1(D) \to \mathcal{Y}_i := L^2([-\sqrt 2, \sqrt 2])$, is Fr\'{e}chet differentiable (\cite{Haltmeier_20071}). The Fr\'{e}chet derivative is given by
\[
F_i'(f) h = 2 R_i f \cdot R_i h, \quad \forall h \in H_0^1(D)
\]
and the adjoint operator $F_i'(f)^*: L^2([-\sqrt{2}, \sqrt{2}]) \to H_0^1(D)$ is given by
\[
F_i'(f)^*g = \left(I - \Delta \right)^{-1} \left( {2 R_i^*\left( {g{R_i}f} \right)} \right),\quad \forall g\in L^2([-\sqrt{2}, \sqrt{2}])
\]
where $I$ is the identity operator, $\Delta$ is the Laplace operator, and $R_i^*:{L^2}([-\sqrt{2}, \sqrt{2}]) \to  {L^2}(D)$ is the adjoint of $R_i$, defined as $(R_i^* g)(x) := g(\langle x, \sigma_i\rangle)$.

In the simulations, we assume that the desired solution $f^\dag$ is piecewise constant; see Figure \ref{schre}(a). Instead of the exact data $y_i = F_i(f^\dag)$, we are only given noisy data
\[
y_i^\delta  = y_i + \delta_{\rm rel} \|y_i\|_{L^2} \varepsilon_i, \quad i = 1, \cdots, N,
\]
where each ${\varepsilon _i}$ follows the normal Gaussian distribution and ${\delta _{\rm rel}}$ denotes the relative noise level. Clearly, for each $i = 1, \cdots, N$ the noise level is $\delta_i = \delta_{\rm rel} \|y_i\|_{L^2}$. For the computation, the domain $D$ is discretized into $110\times 110$  equally sized small squares. To capture the features of $f^\dag$, we take (\ref{thetatv}) with $\lambda=1$. When implementing Algorithm \ref{shbnoisy}, we pick $\xi_0=0.05$ and $f_{0}  = \arg \mathop {\min }\nolimits_{x \in \mathcal{X}} \left\{ {\Theta \left( x \right) - \left\langle {\xi _0 ,x} \right\rangle } \right\}$ as the initial guess and choose
\[\mu_0={0.9}/\lambda, ~\mu_1=10^4, ~\tau=1.5, ~\eta=0.01.\]
To determine $\beta_n^\delta$, we take $\beta=0.99$ and use three different pairs of parameters $(v_0,v_1)$. The same parameter settings are utilized when performing the SGD method.
\begin{table}[htbp]
\centering
\caption{Numerical results for schlieren imaging.}
\begin{tabular}{llllll}
\hline
$\delta_{\rm rel}$~~&~Method~&~$\left({\upsilon _0},{\upsilon _1} \right)$~&~{\sf iter}~&~{\sf time}(s)&~{\sf error}\\
\hline
\multirow{4}{*}{0.05}&
~SGD~&~~-~&~189~~~~~&~0.8810~&~4.2190e-1~\\
~&~SHB~&~$\left(10^{-4},10^{-3}\right)$~&~103~~~~~&~0.6058~&~3.5265e-1~\\
~&~SHB~&~$\left(10^{-4}, 10^{-4}\right)$~&~102~~~~~&~0.6209~&~3.4496e-1~\\
~&~SHB~&~$\left(10^{-5}, 10^{-4}\right)$~&~115~~~~~&~0.7784~&~3.3775e-1~\\
\hline
\multirow{4}{*}{0.02}&
~SGD~&~~-~&~306~~~~~&~3.0774~&~1.0983e-1~\\
~&~SHB~&~$\left(10^{-4},10^{-3}\right)$~~&~164~~~~~&~1.9471~&~9.8755e-2~\\
~&~SHB~&~$\left(10^{-4},10^{-4}\right)$~~&~155~~~~~&~1.7580~&~9.9361e-2~\\
~&~SHB~&~$\left(10^{-5},10^{-4}\right)$~~&~155~~~~~&~1.7436~&~9.7960e-2~\\
\hline
\multirow{4}{*}{0.01}&
~SGD~&~~-~&~1621~~~~~&~12.8245~&~4.4697e-2~\\
~&~SHB~&~$\left(10^{-4},10^{-3}\right)$~&~306~~~~~&~4.1797~&~3.4675e-2~\\
~&~SHB~&~$\left(10^{-4}, 10^{-4}\right)$~&~323~~~~~&~4.3775~&~3.5106e-2~\\
~&~SHB~&~$\left(10^{-5}, 10^{-4}\right)$~&~313~~~~~&~4.4659~&~3.4386e-2~\\
\hline
\multirow{4}{*}{0.005}&
~SGD~&~~-~&~7292~~~~~&~76.3095~&~1.8258e-2~\\
~&~SHB~&~$\left(10^{-4},10^{-3}\right)$~&~592~~~~~&~9.3897~&~1.6102e-2~\\
~&~SHB~&~$\left(10^{-4}, 10^{-4}\right)$~&~591~~~~~&~9.4724~&~1.6112e-2~\\
~&~SHB~&~$\left(10^{-5},10^{-4}\right)$~&~594~~~~~&~9.8862~&~1.6245e-2~\\
\hline
\multirow{4}{*}{0.002}&
~SGD~&~~-~&~75281~~~~~&~838.5848~&~5.5635e-3~\\
~&~SHB~&~$\left(10^{-4},10^{-3}\right)$~&~4493~~~~~&~69.6498~&~4.9028e-3~\\
~&~SHB~&~$\left(10^{-4}, 10^{-4}\right)$~&~4381~~~~~&~72.4039~&~4.8950e-3~\\
~&~SHB~&~$\left(10^{-5},10^{-4}\right)$~&~4099~~~~~&~72.5623~&~4.9172e-3~\\
\hline
\end{tabular}
\label{example3}
\end{table}

Table \ref{example3} summarizes the numerical results of the SHB and SGD methods under different relative noise levels $\delta_{\rm rel}$ and parameters pairs $(v_0,v_1)$. The results record the average number of iterations ``{\sf iter}'', the average computational time ``{\sf time}'' and the average squared relative errors ``{\sf error}'', which are computed over 100 independent simulations. It can be seen that the SHB method, using these three parameter pairs, outperforms the SGD method by substantially reducing the number of iterations and computational time, while producing more accurate reconstructions. This observation clearly demonstrates the superior numerical efficiency of the SHB method. To visualize the performance, we show in Figure \ref{schre}(b)-(d) the results with $\delta_{\rm rel}=0.005$ for a specific run. Figures \ref{schre}(b) and \ref{schre}(c) depict the solutions recovered by SGD and SHB with $v_0=10^{-5}$ and $v_1=10^{-4}$, respectively. The reconstructions by SHB with the other two parameter pairs are similar to that in Figure \ref{schre}(c) and are therefore omitted. Figure \ref{schre}(d) plots the evolution of the relative error $\| x_n^\delta-x^\dag\|^2 /\|x^\dag\|^2$, which indicates the fast convergence of the SHB method.

\begin{figure}[htbp]
\centering
\vspace{-0.35cm}
\subfigtopskip=2pt
\subfigbottomskip=6pt
\subfigcapskip=-5pt
\subfigure[]{
\includegraphics[scale=0.43]{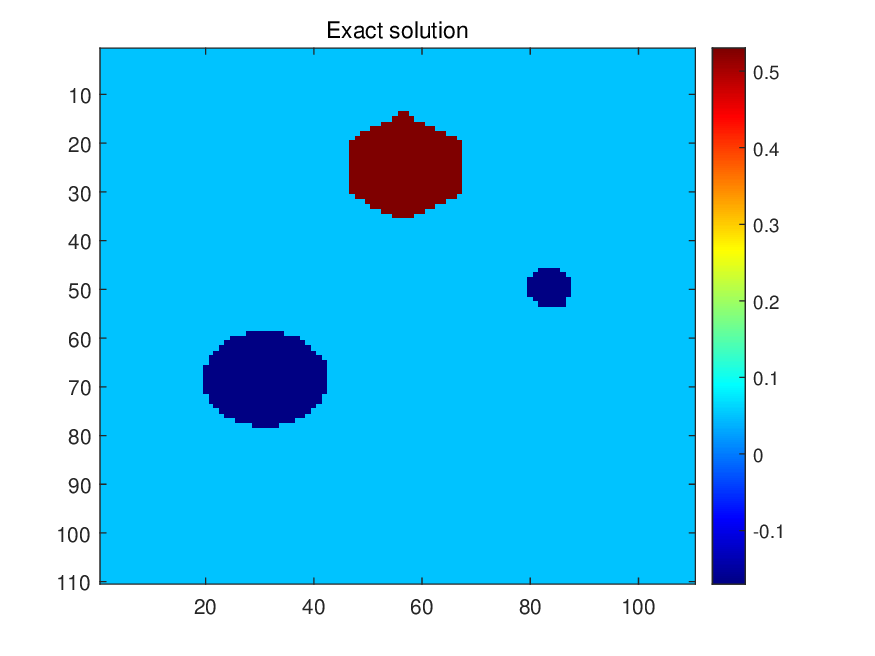}}
\hspace{-1em}
\subfigure[]{
\includegraphics[scale=0.43]{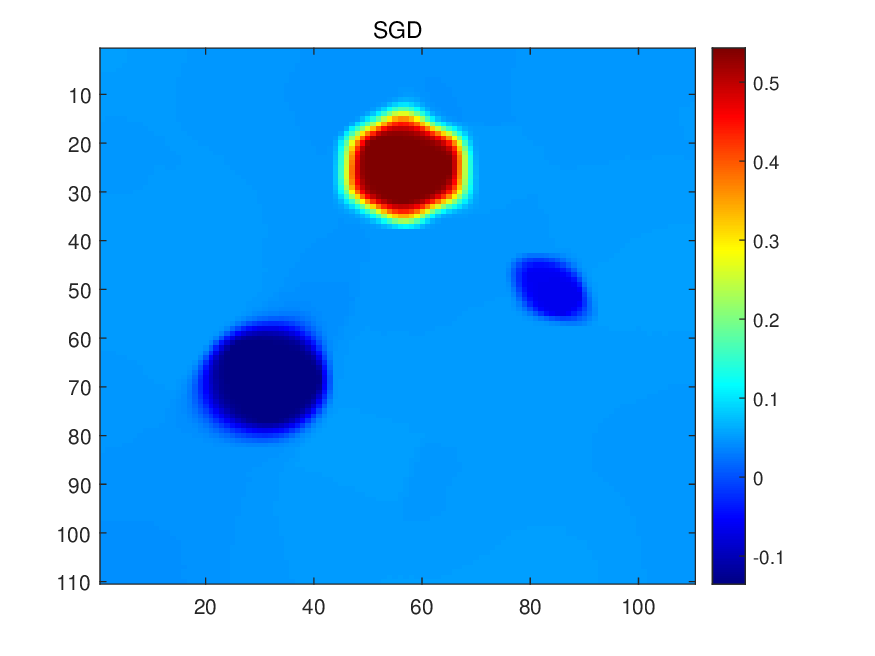}}\\[-2.5mm]
\subfigure[]{
\includegraphics[scale=0.43]{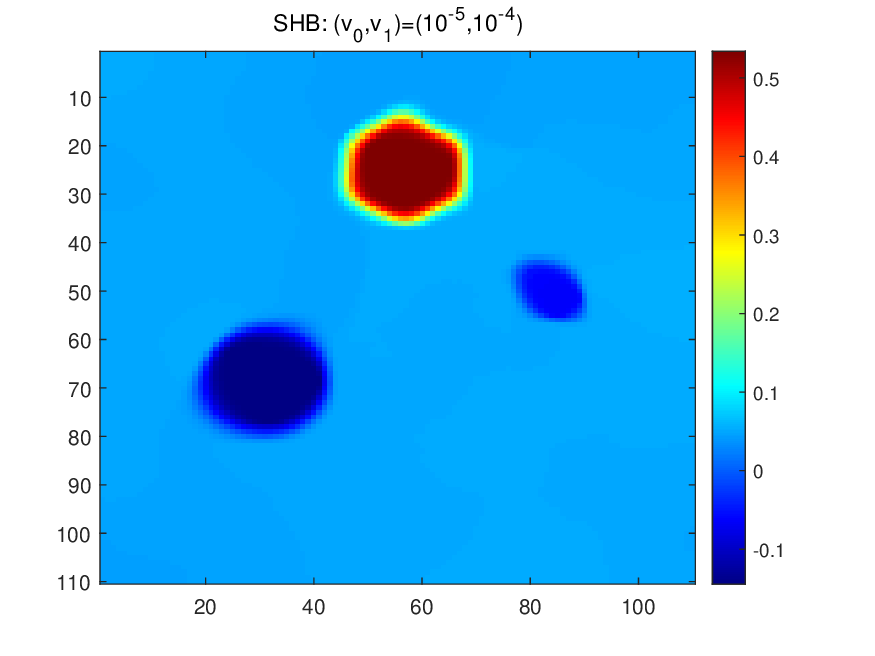}}
\hspace{-1.5em}
\subfigure[]{
 \includegraphics[scale=0.43]{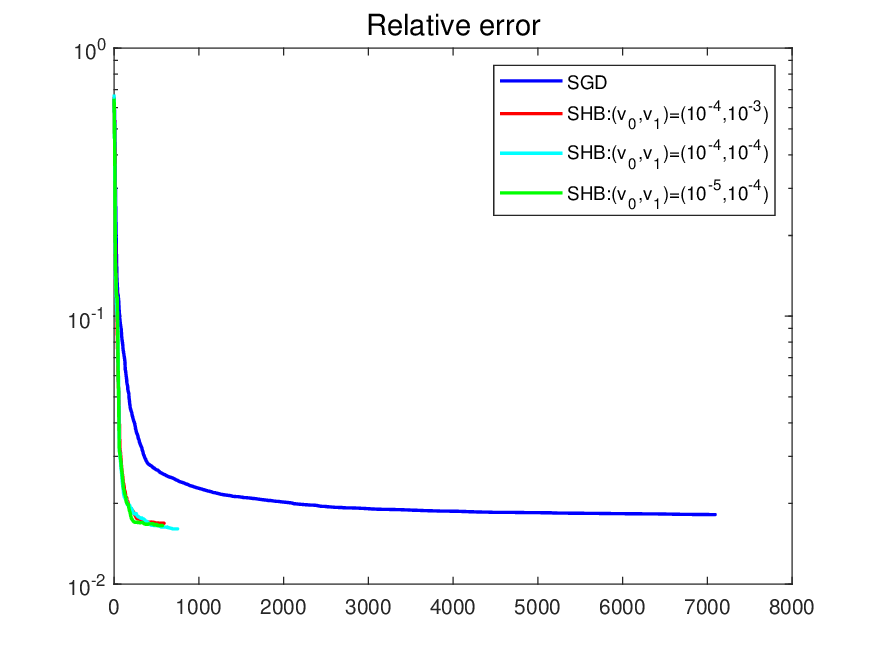}}
\vspace*{-5pt}
\caption{Results for schlieren imaging with $\delta_{\rm rel}=0.005$. (a) Exact solution; (b)-(c) reconstructions of SGD and SHB with $v_0=10^{-5}$ and $v_1=10^{-4}$, respectively; (d) relative error versus the number of iterations.}
\label{schre}
\end{figure}

To further examine the convergence of the SHB method, we fix $v_0=10^{-5}$, $v_1=10^{-4}$ and present, in Figure \ref{boxplots_sch}, the boxplots of the relative error of the approximate solution $\|x_{n_\delta}^\delta-x^\dag \|^2/\|x^\dag\|^2$ and the required number of iterations $n_\delta$, based on 100 independent simulations at different relative noise levels. As observed, when the relative noise level decreases, the reconstruction accuracy improves, reflected in a lower relative error; however, the greater number of iterations is required.  This observation confirms the convergence in Theorem \ref{6regularization}.
\begin{figure}[htbp]
\centering
\vspace{-0.35cm}
\subfigtopskip=2pt
\subfigbottomskip=6pt
\subfigcapskip=-5pt
\subfigure[]{
\includegraphics[scale=0.43]{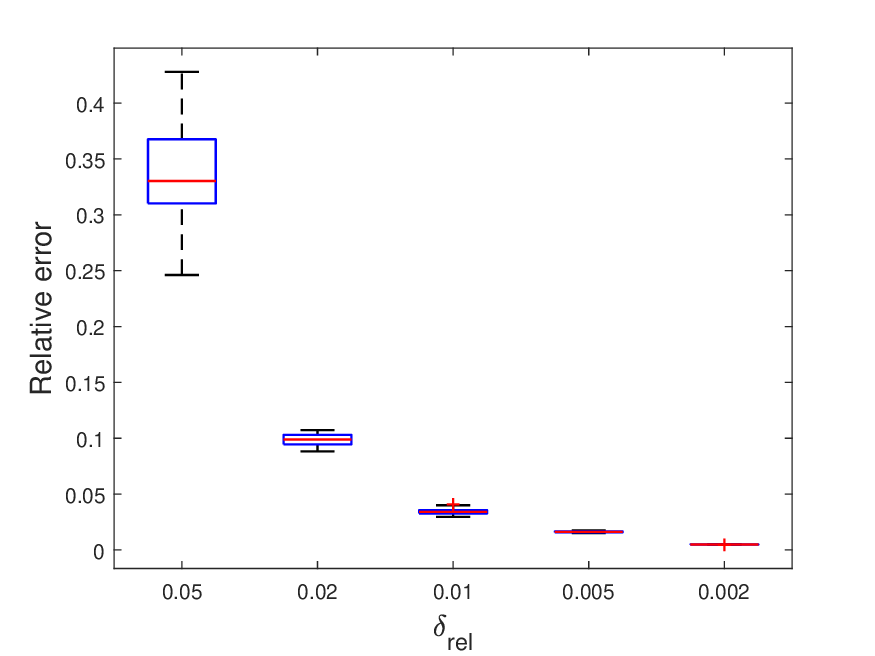}}
\hspace{-1em}
\subfigure[]{
\includegraphics[scale=0.43]{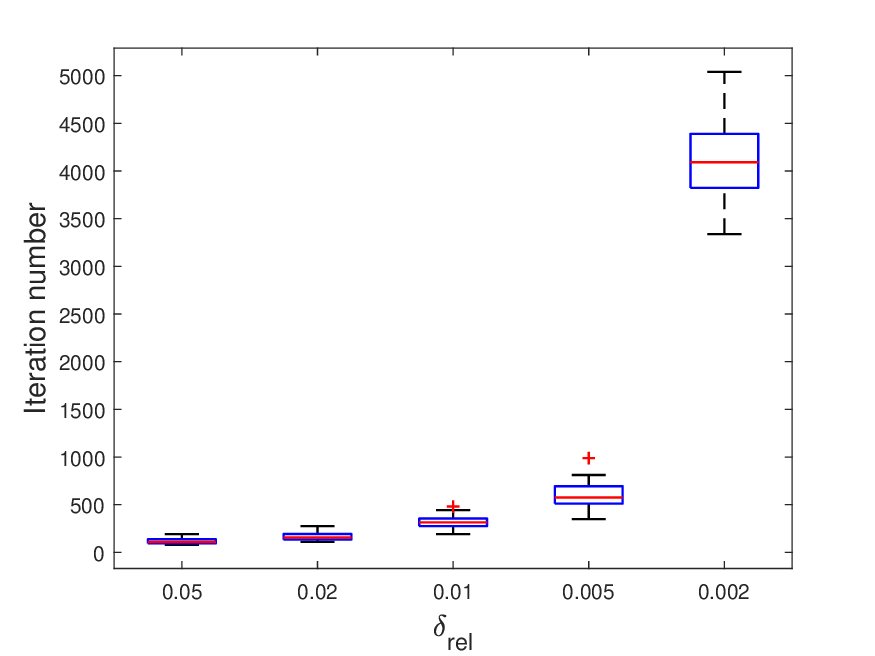}}
\vspace*{-5pt}
\caption{Boxplots of the relative error and iteration number from 100 runs for schlieren imaging.}
\label{boxplots_sch}
\end{figure}

\section{Conclusion}\label{conclusion}
In this work, we have designed an  {\it a posteriori} stopping rule for the stochastic heavy ball method for solving large-scale inverse problems, which avoids the need to calculate the residuals of each equation in the systems and therefore is computationally efficient. To promote fast convergence, an adaptive strategy has been developed to determine the step size and the momentum coefficient. Under some conditions, we have established the almost sure convergence and convergence in expectation of the method under the proposed {\it a posteriori} stopping rule. Further, numerical results demonstrate the effectiveness of the {\it a posteriori} stopping rule and the superiority of the method in terms of the required number of iterations and computational time compared with the SGD method.

\ack{R Gu is partially supported by the National Natural Science Foundation of China (No. 12301535) and China Postdoctoral Science Foundation (No. 2024M750292). Q Jin is partially supported by the Future Fellowship of the
Australian Research Council (FT170100231).}

\section*{References}

\bibliographystyle{unsrt}

\begin{thebibliography}{10}


\bibitem{Bhat_2007}
R. N. Bhattacharya and E. C. Waymire.
\newblock {\em A basic course in probability theory}.
\newblock New York: Springer, 2007.

\bibitem{Bo2011Iterative}
R. I. Bo\c{t} and T. Hein.
\newblock {\em Iterative regularization with a general penalty term-theory and
  application to $L^1$ and TV regularization}.
\newblock Inverse Problems, 28 (2011), 104010.

\bibitem{Bottou_2010}
 L. Bottou.
\newblock {\em Large-scale machine learning with stochastic gradient descent}.
\newblock Proceedings of COMPSTAT2010 (Berlin: Springer), 2010, pp. 177-186.

\bibitem{Bottou_2018}
L. Bottou, F. E. Curtis and J. Nocedal.
\newblock {\em Optimization methods for large-scale machine learning.}
\newblock SIAM Review, 60 (2018), pp. 223-311.

\bibitem{Cioranescu1990Geometry}
 I. Cioranescu.
\newblock {\em Geometry of Banach spaces, duality mappings and nonlinear
  problems}.
\newblock Kluwer Academic Pub, 1990.

\bibitem{Engl1996Regularization}
H. W. Engl, M. Hanke and A. Neubauer.
\newblock {\em Regularization of inverse problems}.
\newblock Kluwer Academic Pub, 1996.


\bibitem{Gu-Fu-2025}
R. Gu, Z. Fu, B. Han, H. Fu.
\newblock  {\em Stochastic gradient descent method with convex penalty for ill-posed problems in Banach space.}
\newblock Inverse Problems, 41 (2025), 055003.

\bibitem{Haltmeier_20071}
 M. Haltmeier, R. Kowar, A. Leit$\tilde{\text{a}}$o and O. Scherzer.
\newblock {\em Kaczmarz methods for regularizing nonlinear ill-posed equations. II. applications}. \newblock Inverse Problems and Imaging, 3 (2007), pp. 507-523.

\bibitem{Hanafy1991}
 A. Hanafy and C. I. Zanelli.
\newblock {\em Quantitative real-time pulsed Schlieren imaging of ultrasonic waves}.
\newblock Proceedings of IEEE Ultrasonics Symposium, 2 (1991), pp. 1223-1227.

\bibitem{Hanke1995A}
M. Hanke, A. Neubauer, and O. Scherzer.
\newblock {\em A convergence analysis of the Landweber iteration for nonlinear ill-posed problems}.
\newblock Numerische Mathematik, 72 (1995), pp. 21-37.

\bibitem{Hansen2012}
 P. C. Hansen and M. Saxild-Hansen.
\newblock {\em AIR tools-a MATLAB package of algebraic iterative reconstruction methods}.
\newblock Journal of Computational and Applied Mathematics, 236 (2012), pp.  2167-2178.



\bibitem{Huang-Jin-Lu-Zhang-2025}
J. Huang, Q. Jin, X. Lu, L. Zhang.
\newblock  {\em On early stopping of stochastic mirror descent method for
ill-posed inverse problems.}
\newblock Numerische Mathematik, 157 (2025), pp. 539-571.





\bibitem{JinB_2019}
B. Jin and X. Lu.
\newblock {\em On the regularization property of stochastic gradient descent}.
\newblock Inverse Problems, 35 (2019), 015004.

\bibitem{JinB_2020}
B. Jin, Z. Zhou and J. Zou.
\newblock {\em On the convergence of stochastic gradient descent for nonlinear ill-posed problems}.
\newblock SIAM Journal on Optimization, 30 (2020), pp. 1421-1450.

\bibitem{Jin-Zhou-zou-2022}
B. Jin, Z. Zhou and J. Zou.
 {\em An analysis of stochastic variance reduced gradient for linear inverse
problems.}
\newblock Inverse Problems, 38 (2022), 025009.



\bibitem{Jin-2025}
Q. Jin.
\newblock {\em Adaptive Nesterov momentum method for solving ill-posed inverse problems.}
\newblock Inverse Problems, 41 (2025), 025005.

\bibitem{Jin2025_book} Q. Jin, {\it Lectures on Nonsmooth Optimization}, Texts in Applied Mathematics, 82.
Springer, Cham, 2025.

\bibitem{Jin-chen-2025}
Q. Jin, L. Chen.
\newblock  {\em Stochastic variance reduced gradient method for linear ill-posed inverse problems.}
\newblock Inverse Problems, 41 (2025), 055014.

\bibitem{Jin-Huang-2024}
Q. Jin and Q. Huang.
\newblock {\em  An adaptive heavy ball method for ill-posed inverse problems.}
\newblock SIAM Journal on Imaging Sciences, 17 (2024), pp. 2212-2241.


\bibitem{Jin-Liu-2025}
Q. Jin and Y. Liu.
\newblock {\em Convergence analysis of a stochastic heavy-ball method for linear ill-posed problems.}
\newblock Journal of Computational and Applied Mathematics, 470 (2025), 116702.

\bibitem{JinQ_2023}
 Q. Jin, X. Lu and L. Zhang.
\newblock {\em Stochastic mirror descent method for linear ill-posed problems in Banach spaces}. \newblock Inverse Problems, 39 (2023), 065010.

\bibitem{JinWang2013}
 Q. Jin and W. Wang.
\newblock {\em Landweber iteration of Kaczmarz type with general non-smooth convex penalty functionals}.
\newblock Inverse Problems, 29 (2013),  pp. 1400--1416.

\bibitem{KNS2008}
 { B. Kaltenbacher, A. Neubauer, and O. Scherzer.}
\newblock {\em Iterative regularization methods for nonlinear ill-posed problems}.
\newblock Walter de Gruyter GmbH \& Co. KG, Berlin, 2008.

\bibitem{Lu_2022}
 S. Lu and P. Math$\acute{\text{e}}$.
\newblock {\em Stochastic gradient descent for linear inverse problems in Hilbert spaces}.
\newblock Mathematics of Computation, 91 (2022), pp. 1763-1788.

\bibitem{Natterer2001}
 F. Natterer.
\newblock {\em The Mathematics of Computerized Tomography}.
\newblock Philadelphia, PA: SIAM, 2001.


\bibitem{Robbins_1951}
 H. Robbins and S. Monro.
\newblock {\em A stochastic approximation method}.
\newblock The Annals of Mathematical Statistics, 22 (1951), pp. 400-407.

\bibitem{Rudin1992Nonlinear}
 L. I. Rudin, S. Osher, and E. Fatemi.
\newblock {\em Nonlinear total variation based noise removal algorithms}.
\newblock Physica D., 60 (1992),  259--268



\bibitem{Schuster2012Regularization}
T. Schuster, B. Kaltenbacher, B. Hofmann, and K. S. Kazimierski.
\newblock {\em Regularization methods in Banach Spaces}.
\newblock De Gruyter, 2012.

\bibitem{Zalinescu2002Convex}
 C. Zalinescu.
\newblock {\em Convex analysis in general vector spaces}.
\newblock World Scientific, 2002.


\bibitem{Zhu2008An}
 M. Zhu and T. Chan.
\newblock {\em An efficient primal-dual hybrid gradient algorithm for total
  variation image restoration}.
\newblock CAM Report, 08-34, UCLA, 2008.
%
%

\end{thebibliography}

\end{document}